\numberwithin{equation}{section}
\font\tencyr=wncyr10 
\def\rus{\tencyr\cyracc}
\renewcommand{\@cite}[2]{[{{\bf #1}\if@tempswa , #2\fi}]}
\renewcommand{\@biblabel}[1]{[{\bf #1}]\hfill}
\newtheorem{thm}{Theorem}[section]
\newtheorem{lm}[thm]{Lemma}
\newtheorem{cl}[thm]{Corollary}
\newtheorem{prop}[thm]{Proposition}
\theoremstyle{remark}
\newtheorem{rmk}[thm]{Remark}
\theoremstyle{definition}
\newtheorem{ex}[thm]{Example}
\newtheorem*{rema}{Remark}
\newcommand {\ah}{{\mathfrak a}}
\newcommand {\g}{{\mathfrak g}}
\newcommand {\h}{{\mathfrak h}}
\newcommand {\el}{{\mathfrak l}}
\newcommand {\me}{{\mathfrak m}}
\newcommand {\q}{{\mathfrak q}}
\newcommand {\z}{{\mathfrak z}}
\newcommand {\gln}{{\mathfrak {gl}}_n}
\newcommand {\sln}{{\mathfrak {sl}}_n}
\newcommand {\slno}{{\mathfrak {sl}}_{n+1}}
\newcommand {\spn}{{\mathfrak {sp}}_{2n}}
\newcommand {\son}{{\mathfrak {so}}_{n}}
\newcommand {\sovm}{{\mathfrak {so}}_{8}}
\newcommand {\sosm}{{\mathfrak {so}}_{7}}
\newcommand {\sfr}{\eus R}
\newcommand {\VV}{\eus V}
\newcommand {\vlb}{\eus V_\lb}
\newcommand {\eus}{\EuScript}
\newcommand {\gS}{{\eus S}}
\newcommand {\ap}{\alpha}
\newcommand {\lb}{\lambda}
\newcommand {\vp}{\varphi}
\newcommand {\blb}{\boldsymbol{\lambda}}
\newcommand {\bmu}{\boldsymbol{\mu}}
\newcommand {\bpi}{\boldsymbol{\pi}}
\newcommand {\bvp}{\boldsymbol{\varphi}}
\newcommand {\bpsi}{\boldsymbol{\psi}}
\newcommand {\tvp}{\tilde{\varphi}}
\newcommand {\ca}{{\mathcal A}}
\newcommand {\N}{{\mathfrak N}}
\newcommand {\co}{{\mathcal O}}
\newcommand {\cP}{{\mathcal P}}
\newcommand {\BN}{{\mathbb N}}
\newcommand {\BZ}{{\mathbb Z}}
\newcommand {\BP}{{\mathbb P}}
\newcommand {\md}{/\!\!/}
\newcommand {\ad}{{\mathrm{ad\,}}}
\newcommand {\codim}{{\mathrm{codim\,}}}
\newcommand {\hot}{{\mathsf{ht}}}
\newcommand {\Lie}{{\mathrm{Lie\,}}}
\newcommand {\Ima}{{\mathsf{Im}}}
\newcommand {\rk}{{\mathsf{rk}}}
\newcommand {\spe}{{\mathsf{Spec\,}}}
\newcommand {\tri}{{\mathfrak{sl}}_2}
\newcommand {\GR}[2]{{\textrm{{\sf\bfseries #1}}}_{#2}}
\newcommand {\ov}{\overline}
\newcommand {\beq}{\begin{equation}}
\newcommand {\eeq}{\end{equation}}
\renewcommand{\le}{\leqslant}
\renewcommand{\ge}{\geqslant}
\renewcommand{\lg}{\langle}
\newcommand{\rg}{\rangle}
\newcommand{\bbk}{\Bbbk}
\newcommand {\omin}{\co_{\sf min}}
\newcommand {\omen}{\Omega_{\sf min}}
\newcommand {\omco}{\Omega_{\co}}
\newcommand {\bomin}{\ov{\co_{\sf min}}}
\newcommand {\bco}{\ov{\co}}
\newcommand{\odin}{\mathrm{{1}\!\! 1}}
\begin{document}
\setlength{\parskip}{2pt plus 4pt minus 0pt}
\hfill {\scriptsize October 13, 2024} 
\vskip1ex

\title[Projections of nilpotent orbits]{Projections of nilpotent orbits in a simple Lie algebra and 
shared orbits}
\author{Dmitri I. Panyushev}
\address{Independent University of Moscow,
Moscow 119002, Russia}
\email{panyush@mccme.ru}
\thanks{}
\keywords{shared orbits, minimal nilpotent orbit, isotropy representation}
\subjclass{17B08, 14L30, 14M17}
\begin{abstract}
Let $G$ be a simple algebraic group and $\mathcal O\subset \g=\Lie G$ a nilpotent orbit. If $H$ is a reductive subgroup of $G$ with $\h=\Lie H$, then $\g=\h\oplus\me$, where
$\me=\h^\perp$. We consider the natural projections 
$\bvp: \bco\to \mathfrak h$ and $\bpsi: \bco\to \mathfrak m$ and two
related properties of $(H, \co)$:

\centerline{
$(\eus P_1)$:   $\bco\cap \me=\{0\}$; \qquad
$(\eus P_2)$:   $H$ has a dense orbit in $\co$.}

\noindent
It is shown that either of these properties implies that $H$ is semisimple.
We prove that $(\eus P_1)$ implies $(\eus P_2)$ for all $\co$ and the converse holds for 
$\mathcal O_{\sf min}$, the minimal nilpotent orbit. If $(\eus P_1)$ holds, then $\bvp$ is finite and 
$[\bvp(e),\bpsi(e)]=0$ for all $e\in\co$. Then $\overline{\bvp(\co)}$ is the closure of a 
nilpotent $H$-orbit $\mathcal O'$. The orbit $\mathcal O'$ is ``shared" in the sense 
of Brylinski--Kostant (1994). We obtain a classification of all pairs $(H,\mathcal O)$ with property 
$(\eus P_1)$ and discuss various relations between $\mathcal O$ and $\mathcal O'$. In particular, we 
detect an omission in the list of pairs of simple groups $(H,G)$ having a shared orbit that was given
by Brylinski and Kostant. It is also proved that $(\eus P_1)$ for $(H,\omin)$ implies that 
$\ov{G{\cdot}\bvp(\omin)}=\ov{G{\cdot}\bpsi(\omin)}$.

\end{abstract}
\maketitle


\section{Introduction}

\noindent
\subsection{} 
Homogeneous symplectic varieties $X=H/Q$ lie at the heart of of representation theory and symplectic
geometry. Marvellously, all coadjoint $H$-orbits $\co'$ are symplectic varieties. If $H$ is a 
simply-connected semisimple Lie group, then each $X$ is a covering of some $\co'$. From many points 
of view, the most interesting $X$ are those whose base $\co'$ is a nilpotent orbit. In a sense, the 
geometry of these ``nilpotent covers" $X$ is the subject of this paper.
One of our motivations came from a desire to better understand work of R.\,Brylinski and B.\,Kostant on
pairs of simple Lie groups having a ``shared" nilpotent orbit. The main theme of~\cite{bk94} is that a 
covering $X$ of a nilpotent $H$-orbit $\co'$ may ``see'' a larger group $G$ as symmetry group of (the 
closure of) of $X$. If this is the case, then $\co'$ is said to be a {\it shared orbit\/} with respect to the pair 
$(H,G)$. This provides actually two nilpotent orbits:  the $H$-orbit $\co'$ and the $G$-orbit $\co$ that is
the base for the dense $G$-orbit in the closure of $X$. The orbits $\co'$ and $\co$ are also said to be {\it shared}. 

The key result of~\cite{bk94} is the determination of all pairs of simple groups $H\subset G$ having a 
shared nilpotent $H$-orbit. However, it is incomplete. In this paper, we point out an omission 
in~\cite{bk94} and provide a detailed classification of the pairs $(H,G)$ having a shared orbit. Moreover, 
for all such pairs, we determine {\bf all} pairs of shared orbits $(\co',\co)$. Our approach is opposite to 
that of~Brylinski--Kostant. We begin with a nilpotent $G$-orbit $\co$ and look for possible subgroups $H$
having a dense orbit in $\co$. Our methods are rather different and the main tool is the study of projections of $\co$ related to $H\subset G$. This article is also a sequel to~\cite{omin}.

\subsection{} Let $G$ be a simple algebraic group with $\Lie G=\g$ and $\N=\N(\g)$ the nilpotent cone in $\g$.
If $H$ is a reductive subgroup of $G$, then the Killing form of $\g$ is non-degenerate on $\h=\Lie H$
and $\g=\h\oplus\me$, where $\me=\h^\perp$. Here $\N\cap\h=\N(\h)$ is the nilpotent cone in $\h$.
The linear action $(H:\me)$ is the {\it isotropy representation\/} of $H$. If $\h=\g^\sigma$ for an involution $\sigma\in\mathsf{Aut}(\g)$, then we say that $(\g,\h)$ is a {\it symmetric pair\/} and $\h$ is a {\it symmetric subalgebra\/} of $\g$. In this case, $\me$ is the $(-1)$-eigenspace of $\sigma$, and we also write $H=G_0$, $\h=\g_0$, and $\me=\g_1$.

For any $G$-orbit $\co\subset\g$, there are natural projections associated with $\h$ and $\me$:
\[
      \bvp: \bco\to \h  \quad \& \quad \bpsi:\bco\to \me .
\]
Here $\bco$ is the closure of $\co$. If $\omin\subset\N$ is the minimal non-trivial $G$-orbit and 
$\h=\g_0$ is symmetric, then these projections have been studied in~\cite{omin}. It was shown that the 
structure of the $G_0$-varieties $\bvp(\omin)$ and $\bpsi(\omin)$ depends essentially on whether
$\omin\cap\g_1$ is empty or not. The presence of $\sigma$ was also used for describing a connection 
between $\bvp(\omin)$, $\bpsi(\omin)$, and the secant variety of $\BP(\omin)$. There are only six
involutions $\sigma$ such that $\g_1\cap\omin=\varnothing$ (i.e., $\g_1\cap\ov{\omin}=\{0\}$), and for 
those $\sigma$, $G_0$ has a dense orbit in $\omin$. 

In this article, the projections $\bvp$ and $\bpsi$ are considered for arbitrary pairs $(H,\co)$, where $H$ 
is reductive and $\co\subset\N$. We are mainly interested in two related properties: 
\begin{itemize}
\item[$(\eus P_1)$] \ \quad $\bco\cap\me=\{0\}$;
\item[$(\eus P_2)$] \quad $H$ has a dense orbit in $\co$.
\end{itemize}

\noindent
By~\cite[Theorem\,4.6]{omin}, if $(\g,\h)$ is a symmetric pair, then 
$(\eus P_1)\Leftrightarrow (\eus P_2)$ for $\omin$. In Section~\ref{sect:P1-P2}, we prove that if 
$(\eus P_1)$ or $(\eus P_2)$ holds for $(H,\co)$, then $H$ must be semisimple. It is proved that
$(\eus P_1)$ implies $(\eus P_2)$ for any $\co\subset\N$ and semisimple $H$. Moreover we establish 
the equivalence of $(\eus P_1)$ and $(\eus P_2)$  for $\omin$.

If $(\eus P_2)$ holds, then the dense $H$-orbit in $\co$ is denoted by $\omco$ (for $\co=\omin$, we 
write $\omen$ in place of $\Omega_{\omin}$). Here $\bvp(\omco)$ is a nilpotent $H$-orbit, 
$\dim\co=\dim\bvp(\co)=\dim\bvp(\omco)$, and 
$[\bvp(e), \bpsi(e)]=0$ for all $e\in \bco$, see Prop.~\ref{thm-6.1}. Furthermore, if $(\eus P_1)$ holds, 
then the morphism $\bvp: \bco\to \bvp(\bco)$ is finite and $\h^{\bvp(e)}\subset \h^{\bpsi(e)}$ for all 
$e\in \bco$, see Prop.~\ref{prop-ast}. In this case, $\bpsi(e)$ is contained in the closure of 
$G{\cdot}\bvp(e)$. We also explain how to compute the degree of the finite map 
$\bvp$ (Section~\ref{subs:deg}).

The complete classification of pairs $(H,\co)$ with property $(\eus P_1)$ is obtained in 
Sections~\ref{sect:table} and~\ref{sect:classif}. If $\bco\cap\me=\{0\}$, then the same holds for any orbit 
$\co'\subset\bco$; in particular, for $\omin$. Therefore, we first provide the list of all pairs $(G,H)$ such 
that $(\eus P_1)$ holds for $(H,\omin)$, see Table~\ref{table:odin}. (Such subgroups $H$ are said to be 
{\it good}.) In Section~\ref{subs:check}, we verify that $(\eus P_1)$ holds for all items in the list.
Afterwards, the completeness of this list is established in Section~\ref{sect:classif}. To obtain the 
classification of all pairs $(H,\co)$ with property $(\eus P_1)$, one has to pick suitable non-minimal 
nilpotent orbits for each good subgroup $H$ from Table~\ref{table:odin}. In most cases, there are no such 
orbits. A few possibilities for such non-minimal orbits are described in Theorem~\ref{thm:non-minim-good} .
Our classification is summarised in Tables~\ref{table:odin} and~\ref{table:dva}.

Section~\ref{sect:complement} provides some complementary results on $\bvp$ and $\bpsi$, if 
$(\eus P_1)$ holds. Using our classification, we prove that the $H$-orbits $\bvp(\omen)\subset\h$ and 
$\bpsi(\omen)\subset \me$ generate the same $G$-orbit in $\g$. Hence 
$\ov{G{\cdot}\bvp(\omin)}=\ov{G{\cdot}\bpsi(\omin)}$. However, this relation fails for the non-minimal 
orbits $\co$ with $(\eus P_1)$. Another observation is that the height of the nilpotent $H$-orbit 
$\bvp(\omco)$ equals the height of the $G$-orbit $G{\cdot}\bvp(\omco)$. (See Section~\ref{subs:nilp} about the {\it height}.)

\subsection{}
Pairs of orbits $\co\subset \N$ and $\co'=\bvp(\omco)\subset \N(\h)$ have been studied in \cite{bk94},
especially, for $\co=\omin$. We say that  
the orbits $(\co, \bvp(\omco))$ are 
{\it shared}. The classification of pairs of simple groups $(H, G)$ such that the orbits
$\omin$ and $\bvp(\omen)$ are shared is presented in~\cite{bk94}, see Corollary~5.4 and Theorem~5.9
therein. Unfortunately, it is not complete. 
The pair $(\GR{B}{3},\GR{B}{4})$, which is item~7 in our Table~\ref{table:odin}, is missing in that classification. 
Probably, the reason for this omission is that the embedding 
$\GR{B}{3}\subset\GR{B}{4}$ is not the obvious one. On the level of Lie algebras, it is given by the chain 
of embeddings $\mathfrak{spin}_7\subset\sovm\subset \mathfrak{so}_9$. For this embedding, the 
homogeneous space $\GR{B}{4}/\GR{B}{3}$ is spherical~\cite[Tabelle\,1]{kr79}. 
But, for the standard embedding $\sosm\subset\sovm\subset\mathfrak{so}_9$, neither $(\eus P_1)$ nor
$(\eus P_2)$ is satisfied; and in this case the homogeneous space $\GR{B}{4}/\GR{B}{3}$ is of 
complexity~1~\cite[Chap.\,3]{diss}. 

It is worth noting that property $(\eus P_1)$ does not occur in~\cite{bk94} and  
results therein are obtained via the machinery of symplectic geometry.
Our approach to the classification and properties of the projection $\bvp$ is mostly invariant-theoretic. To establish 
either presence or absence of $(\eus P_1)$, various ad hoc methods are employed. In particular, we 
make use of Kimelfeld's classification of locally transitive actions on flag varieties~\cite{kim79,kim},
Onishchik's theory of decompositions of Lie groups~\cite{on62,on69}, some properties of sheets in 
$\g$~\cite{schichten,bokr}, and Dynkin's classification of $S$-subalgebras of the exceptional Lie 
algebras~\cite{Dy52}.

\section{Preliminaries}
\label{sect:prelim}
\noindent
The ground field $\bbk$ is algebraically closed and $\mathrm{char\,}\bbk=0$. All groups and varieties are 
assumed to be algebraic, and all algebraic groups are affine. 
Let $\bbk[X]$ denote the algebra of regular functions on a variety $X$. If $X$ is irreducible, then 
$\bbk(X)$ is the field of rational functions on $X$. If $X$ is acted upon by a group $Q$, then $\bbk[X]^Q$ 
and $\bbk(X)^Q$ are the subalgebra and subfield of invariant functions, respectively. For $x\in X$, let 
$Q^x$ denote the {\it stabiliser} of $x$ in $Q$. Then $\q^x=\Lie(Q^x)$.
If $X$ is affine and $Q$ is reductive, then $X\md Q:=\spe(\bbk[X]^Q)$ and $\bpi:X\to X\md Q$ is
the quotient morphism. 

Throughout, $G$ is a simple algebraic group, $B$ is a Borel subgroup of $G$, $T\subset B$ is a maximal 
torus, and $U=(B,B)$. The respective subgroups in a reductive group $H\subset G$ are denoted by 
$B_H, U_H$, etc. Write $\z_\g(\h)$ or $\g^\h$ for the centraliser of $\h$ in $\g$. It is a reductive 
subalgebra of $\g$. For the exceptional simple algebraic groups and their Lie algebras, we adopt the 
convention that $\GR{G}{2}$, $\GR{F}{4}$, and $\GR{E}{n}$ are groups, while $\eus G_2$, $\eus F_4$, 
and  $\eus E_n$ are Lie algebras.

\subsection{Nilpotent orbits}
\label{subs:nilp}
Let $\N/G$ denote the finite set of $G$-orbits in $\N\subset\g$. The partial order in $\N/G$ is defined by 
the closure relation on the orbits. The Hasse diagrams of $\N/G$ for the small rank classical cases can be found in~\cite[Section\,19]{KP82}.
The dense $G$-orbit in $\N$, $\co_{\sf reg}$, is said to be {\it regular}. 
It is the unique maximal element of $\N/G$. The boundary $\N\setminus\co_{\sf reg}$ is irreducible, and 
the dense orbit in it is said to be {\it subregular}, denoted $\co_{\sf sub}$. The unique minimal nonzero $G$-orbit in $\N$ is denoted by $\omin$. Whenever the dependance on $\g$ is essential (helpful),
we write $\N(\g)$, $\omin(\g)$, etc. It is classically known that $\N=\ov{\co_{\sf reg}}$ is normal (Kostant, 1963) and $\ov{\co_{\sf sub}}$ is normal 
(Broer, 1993).

Any nonzero $e\in\N$ is included into an $\tri$-triple $\{e,h,f\}$, see~\cite[Ch.\,3.3]{CM} or 
\cite[Ch.\,6, \S\,2]{t41}, and the semisimple element $h$ is called a {\it characteristic\/} of $e$. The 
{\it height\/} of $e$ is 
\[
    \hot(e)=\max\{k\mid (\ad e)^k\ne 0\} .
\] 
Set $\hot(G{\cdot}e):=\hot(e)$. 
Then $\hot(\omin)=2$ and $\hot(\co_{\sf reg})=2\boldsymbol{c}(\g)-2$, where 
$\boldsymbol{c}(\g)$ is the Coxeter number of $\g$. Hence $2\le \hot(z)\le 2\boldsymbol{c}(\g)-2$
for any $z\in\N\setminus\{0\}$.
The integer $\hot(G{\cdot}e)$ is readily computed 
via the {\it weighted Dynkin diagram\/} (=\,{\it\bfseries wDd}) of $G{\cdot}e$, see \cite[Eq.\,(2.1)]{p99}. 
For classical $G$, there are formulae for the height via partitions, see~\cite[Theorem\,2.3]{p99}.
If $e\in\N(\h)\subset \N(\g)$, then we write
$\hot_H(e)=\hot_H(H{\cdot}e)$ and $\hot_G(e)=\hot_G(G{\cdot}e)$ for the heights in $\h$ and $\g$, respectively. Clearly, $\hot_H(e)\le \hot_G(e)$. 

If $G$ is classical, then $\co(\bmu)$ is the nilpotent orbit corresponding to the partition 
$\bmu$. But one should keep in mind that, for $G=SO_{4m}$ and a {\it very even} $\bmu$, there are 
actually two associated orbits, denoted $\co(\bmu)_{\sf I}$ and $\co(\bmu)_{\sf II}$~\cite[Chap..\,5.1]{CM}. These orbits are also 
said to be {\it very even}. Their union forms a sole $O_{4m}$-orbit. In general, we shall often use
\begin{itemize}
\item formulas for the weighted Dynkin diagram of $\co(\bmu)$ via $\bmu$~\cite[5.3]{CM};
\item formulas for $\dim\co(\bmu)$~\cite[2.4]{KP82}, \cite[Cor.\,6.1.4]{CM};
\item the description of the closure of $\co(\bmu)$, denoted $\ov{\co(\bmu)}$, in terms of $\bmu$~\cite[6.2]{CM}.
\end{itemize}


\subsection{The orbit of highest weight vectors}
\label{subs:HV}
Let $\vlb$ be a simple $G$-module with highest weight $\lb$ (with respect to some choice of 
$T\subset B$) and $\omin(\vlb)$ the orbit of the highest weight vectors. Let $\N_G(\vlb)$ denote the 
{\it null-cone\/} in $\vlb$, i.e., $\N_G(\vlb)=\{v\in \vlb\mid  \{0\}\in \ov{G{\cdot}v}\}$.
Then $\omin(\vlb)$ is the unique nonzero orbit of minimal dimension in $\N(\vlb)$. 
If $v_\lb\in\vlb$ is a highest weight vector (i.e., a $B$-eigenvector), then $\omin(\vlb)=G{\cdot}v_\lb$.
The orbit $\omin(\vlb)$ is stable under homotheties, i.e., the dilation action of $\bbk^\ast$ on $\vlb$, 
and $\BP(\omin(\vlb))$ is the unique closed $G$-orbit in the projective space $\BP(\vlb)$. Hence
\beq   \label{eq:closure-omin}
     \ov{\omin(\vlb)}=\omin(\vlb)\cup\{0\} ,
\eeq 
see~\cite[Theorem~1]{vp72}. We say that $\omin(\vlb)$ is the {\it minimal $G$-orbit}\/ in  $\N_G(\vlb)$.
Let us recall some basic properties of the minimal $G$-orbits established in~\cite[\S\,1]{vp72}:
\begin{itemize}
\item for any $\lb$, the variety $\ov{\omin(\vlb)}$ is normal and $\bbk[\ov{\omin(\vlb)}]=\bbk[\omin(\vlb)]$;
\item the algebra of regular functions $\bbk[\ov{\omin(\vlb)}]$ is $\BN$-graded and the component of 
grade $n$ is the simple $G$-module $(\VV_{n\lb})^*$, the dual of $\VV_{n\lb}$;
\item $\ov{\omin(\vlb)}$ is a {\it factorial\/} variety $\bigl($i.e., $\bbk[\ov{\omin(\vlb)}]$ is a UFD$\bigr)$ if 
and only if $\lb$ is a fundamental weight. (There is also a general formula for the {\it divisor class group\/}
of $\bomin$, $\mathsf{Cl}(\bomin)$, in terms of the expression of  $\lb$ via the fundamental weights.)
\end{itemize}

\noindent
For the adjoint representation, $\N_G(\g)=\N$ and we usually write $\omin$ in place of 
$\omin(\g)$.

\section{Properties $(\eus P_1)$ and $(\eus P_2)$ for nilpotent orbits}
\label{sect:P1-P2}
\noindent
Throughout, $H$ is a reductive subgroup of a simple algebraic group $G$ and $\g=\h\oplus\me$.
For a $G$-orbit $\co\subset\g$, we consider the following two properties of the pair $(H,\co)$:
\begin{itemize}
\item[$(\eus P_1)$] \ \quad $\bco\cap\me=\{0\}$;
\item[$(\eus P_2)$] \quad $H$ has a dense orbit in $\co$.
\end{itemize}

\noindent
As both properties depend only on the identity component of $H$, we may always assume that $H$ is 
connected. In this section, we prove that 
\begin{itemize}
\item \ $(\eus P_1)$ implies $(\eus P_2)$ for the nilpotent orbits in $\g$;
\item \ these two properties are equivalent for $\co=\omin$.
\end{itemize}
Since $\bomin=\omin\cup\{0\}$, property ($\eus P_1$) for $\omin$ means that 
$\omin\cap\me=\varnothing$.

For the symmetric pairs $(\g,\h)$ and $\omin$, the equivalence of $(\eus P_1)$ and $(\eus P_2)$ is proved 
in~\cite[Theorem\,4.6]{omin}. The general setting requires more efforts and a different approach. 

For $e\in \co$, we write $e=a+b$, where $a=\bvp(e)\in \h$ and $b=\bpsi(e)\in\me$. If $H$ has a dense 
orbit in $\co$, then $\omco$ denotes this $H$-orbit. It is clear that $\omco$ is a homogeneous symplectic 
$H$-variety. Write $\omen$ in place of $\Omega_{\omin}$.

\begin{prop}[cf.\,{\cite[Theorem\,6.1]{omin}}]   
\label{thm-6.1}
If\/ $(\eus P_2)$ holds for $H$ and a  $G$-orbit $\co\subset\g$, then 
\begin{enumerate}
\item $\dim \bvp(\co)=\dim\co$;
\item for any $e=a+b\in\ov{\co}$, one has $[a,b]=0$;
\item if\/  $a+b\in\omco$, then $\h^a=\h^e\subset \h^b$ and $\me^b\subset\me^a$;
\item the centraliser of\/ $\h$ in $\g$ is trivial; in particular, $\h$ is semisimple.
\item If $\co\subset\N$, then $a,b\in\N$ for any $e\in\co$. 
Furthermore, if $a+b\in\omco$ and $h_a\in\h$ is a characteristic of $a\in\h$, then $[h_a,b]=2b$
and hence $[h_a,e]=2e$. 
\end{enumerate}
\end{prop}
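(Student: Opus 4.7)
The setup suggests using the Hamiltonian viewpoint: $\co$ carries its Kirillov--Kostant symplectic form, and the projection $\bvp:\co\to\h$ is the moment map for the induced Hamiltonian $H$-action. Several of the assertions then follow from standard moment-map rank identities combined with elementary invariant-theoretic arguments, and I plan to prove them in the order (4), (2), (1)+(3), (5).

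For (4), any $X\in\g^\h$ yields an $H$-invariant linear function $f_X(e)=(X,e)$ on $\g$. Since $H$ has a dense orbit in $\co$, $f_X$ is constant on $\co$, so $(X,e-e')=0$ for all $e,e'\in\co$. The linear span of $\co-\co$ is a $G$-stable subspace of $\g$; for $\co\ne\{0\}$, simplicity of $\g$ forces it to equal $\g$, and non-degeneracy of the Killing form then gives $X=0$. Thus $\g^\h=0$, which in particular yields $\z(\h)=0$ and the semisimplicity of $\h$. Claim (2) is in a similar spirit: the function $F(e)=(\bvp(e),\bvp(e))$ is $H$-invariant, hence constant on $\omco$ and on $\bco$. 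Computing $dF_e$ at $e=a+b\in\omco$ along $v=[X,e]$ with $X\in\me$ gives, via invariance of the Killing form, $dF_e(v)=2(a,[X,b])=-2(X,[a,b])$. Vanishing of this for all $X\in\me$, combined with non-degeneracy of the Killing form on $\me$, forces $[a,b]=0$.

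Claim (1) and the inclusion $\h^a\subset\h^b$ of (3) both follow from the moment-map identity $\dim d\bvp_e(T_e\co)=\dim\h-\dim\h^e$. At $e\in\omco$ one has $T_e\co=[\h,e]$, so the image equals $[\h,a]$, of dimension $\dim\h-\dim\h^a$. Hence $\dim\h^e=\dim\h^a$, and since $H^e\subset H^a$ is automatic and $\h^e=\h^a\cap\h^b$ (immediate from $\g=\h\oplus\me$), we obtain $\h^a=\h^e\subset\h^b$; statement (1) is the same dimension equality transported to images. For the companion inclusion $\me^b\subset\me^a$, the analogous \textit{pseudo-moment} identity for $\bpsi$ gives $\dim\me^e=\dim\me-\dim[\h,b]$, while the Killing-form adjunction between $\ad b:\h\to\me$ and $\ad b:\me\to\h$ yields $\dim[\h,b]=\dim\me-\dim\me^b$. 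Combined with $\me^e\subset\me^b$, this forces $\me^e=\me^b$ and therefore $\me^b\subset\me^a$.

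For (5), nilpotency of $a$ and $b$ exploits that $\co\subset\N$ is a cone under $\bbk^*$, so the dense $H$-orbit $\omco$ is $\bbk^*$-stable, and hence so is $H\cdot a=\bvp(\omco)$. Thus $a$ is $H$-conjugate to $ta$ for every $t\in\bbk^*$, forcing the spectrum of $\ad a$ to be $\bbk^*$-invariant and therefore $\{0\}$; the same argument shows $b\in\N$. For the final assertion, fix an $\tri$-triple $\{a,h_a,f_a\}\subset\h$; since $[a,b]=0$, the element $b$ lies in $\g^a$ and decomposes along the non-negative $h_a$-weights of $\me\cap\g^a$ as $b=\sum_{k\ge 0}b_k$. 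For each $t\in\bbk^*$, pick $h_t\in H$ with $h_te=te$; equating $\h$- and $\me$-parts forces $h_ta=ta$ and $h_tb=tb$. Writing $h_t=\gamma_tk_t$ with $\gamma_t=\exp((\log t)h_a/2)$ (interpreted after passing to a square root of $t$) and $k_t\in H^a$, a direct computation gives $k_tb=\sum t^{1-k/2}b_k$. The crucial step, and the place I expect the main technical care, is that (3) forces $(H^a)^0\subset H^b$, so $H^a{\cdot}b$ is finite; the algebraic map $t\mapsto k_tb$ lands in this finite set and must therefore be constant, equal to its value $b$ at $t=1$. Linear independence of the $h_a$-weight spaces of $\me$ then gives $b_k=0$ for $k\ne 2$, so $[h_a,b]=2b$ and hence $[h_a,e]=2e$.
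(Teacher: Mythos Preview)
Your arguments for (1), (2), (4), (5) and the first chain $\h^a=\h^e\subset\h^b$ in (3) are correct and self-contained; the paper simply cites \cite{omin} for all of these, so here you are doing more than the paper does, by essentially the intended methods.

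The gap is in your derivation of $\me^b\subset\me^a$. Your ``Killing-form adjunction between $\ad b:\h\to\me$ and $\ad b:\me\to\h$'' tacitly assumes that $\ad b$ maps $\me$ into $\h$, i.e.\ that $[\me,\me]\subset\h$; this holds only for symmetric pairs. In the general setting the adjoint of $\ad b\vert_\h:\h\to\me$ is $m\mapsto -\bvp([b,m])$, whose kernel is $\{m\in\me:[b,m]\in\me\}$, not $\me^b$. So what your adjunction actually yields is $\dim[\h,b]\le\dim\me-\dim\me^b$, not equality. The companion step ``$\me^e\subset\me^b$'' and the implication ``$\me^e=\me^b\Rightarrow\me^b\subset\me^a$'' likewise rely on the decomposition $\me^e=\me^a\cap\me^b$, which again comes from $[\me,\me]\subset\h$; in general one only has $\me^b\subset\me^e$.

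The paper's fix is short and uniform. From $[\g,e]=[\h,e]$ (for $e\in\omco$) and the inclusion $\h^a\subset\h^b$ you already proved, one gets
\[
[\g,e]\cap\me=\{[X,b]:X\in\h^a\}=[\h^a,b]=\{0\}.
\]
Then for $m\in\me^b$ one has $[m,e]=[m,a]\in[\g,e]\cap\me=\{0\}$, so $m\in\me^a$. Your valid identity $\dim\me^e=\dim\me-\dim[\h,b]$ is in fact equivalent to $[\g,e]\cap\me=\{0\}$, so you are one line away; just replace the adjunction/dimension-count by this direct argument.

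A minor point on (5): the use of $\log t$ and $\sqrt{t}$ is informal. Working instead with the cocharacter $\lambda:\bbk^*\to H$ with $d\lambda(1)=h_a$, one has $\lambda(u)^{-1}(u^2e)\in a+H^a{\cdot}b$ for all $u\in\bbk^*$, so $u\mapsto\sum_k u^{2-k}b_k$ is a morphism from the connected variety $\bbk^*$ into the finite set $H^a{\cdot}b$, hence constant; this makes your argument rigorous without transcendental functions.
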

\begin{proof}
Only the last embedding in (3) does not occur in~\cite{omin}. To prove it, we use the relation 
$[\g,e]\cap\me=\{0\}$ proved therein. We have $[\me^b,e]=[\me^b,a]\subset [\g,e]\cap\me$.  Hence 
$\me^b\subset \me^a$.
\end{proof}
\noindent
Note that $\co$ is {\bf not} assumed to be nilpotent in parts (1)-(4). But if $H$ has a dense orbit in 
a semisimple orbit $\co$, then it follows from \cite{L72} that $H$ acts transitively on $\co$. This  fits into 
Onishchik's theory of decompositions of semisimple algebraic groups~\cite{on62,on69}, and we do not pursue it here (cf. the proof of Prop.~\ref{diamond-An}).

\begin{cl}   \label{cor:for-P2}
If\/ $(\eus P_2)$ holds for $H$ and $\co\subset\g$, then $\dim \co\le \dim\h-\rk\,\h$.
\end{cl}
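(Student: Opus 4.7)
The plan is to combine the information already assembled in Proposition~\ref{thm-6.1} with the standard bound on centralizer dimensions in a reductive Lie algebra. Property $(\eus P_2)$ furnishes a dense $H$-orbit $\omco\subset\co$, so $\dim\co=\dim\omco=\dim\h-\dim\h^e$ for any $e\in\omco$.

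Next, I would invoke Proposition~\ref{thm-6.1}(3), which says that for $e=a+b\in\omco$ one has $\h^e=\h^a$. In particular, $\dim\h^e=\dim\h^a$ with $a=\bvp(e)\in\h$. By part (4) of the same proposition, $\h$ is semisimple, hence reductive. Then I would apply the elementary fact that in a reductive Lie algebra $\h$, the centralizer of any element $a\in\h$ satisfies $\dim\h^a\ge\rk\h$, with equality exactly for the regular elements. (This follows, e.g., from the Jordan decomposition $a=a_s+a_n$: the centralizer of the semisimple part $a_s$ is a Levi subalgebra of the same rank as $\h$, and then within this Levi one reduces to Lie's theorem on regular elements.)

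Combining these two ingredients,
\[
\dim\co \;=\; \dim\h-\dim\h^e \;=\; \dim\h-\dim\h^a \;\le\; \dim\h-\rk\h,
\]
which is the claimed bound. No serious obstacle arises — the entire content has been prepared by the preceding proposition — so the proof should consist essentially of citing parts (3) and (4) of Prop.~\ref{thm-6.1} together with the centralizer-dimension inequality.
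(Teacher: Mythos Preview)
Your proof is correct and essentially the same as the paper's. The only cosmetic difference is that the paper cites part~(1) of Proposition~\ref{thm-6.1} (namely $\dim\co=\dim\bvp(\omco)$, with $\bvp(\omco)=H{\cdot}a$ an $H$-orbit in $\h$) and then applies the centralizer bound to $a$, whereas you reach the same endpoint via part~(3), using $\h^e=\h^a$; both routes unwind to the identical inequality $\dim\co=\dim\h-\dim\h^a\le\dim\h-\rk\h$.
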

\begin{proof} For the $H$-orbit $\bvp(\omco)$,
one has $\dim\co=\dim\bvp(\omco)\le \dim\h-\rk\,\h$.  
\end{proof}

In what follows, it is assumed that $\co\in\N/G$.
Clearly, the orbits $H{\cdot}a\subset\h$ and $H{\cdot}b\subset\me$ do not depend on the choice of 
$e\in\omco$. Then $G{\cdot}a=G{\cdot}\bvp(\omco)$ and $G{\cdot}b=G{\cdot}\bpsi(\omco)$
are the $G$-orbits generated by the projections $\bvp$ and $\bpsi$, respectively. Since
$\dim\bvp(\omco)=\dim\co$, we have $\dim G{\cdot}\bvp(\omco)>\dim\co$. We shall prove in 
Section~\ref{subs:com-pl} that $\ov{G{\cdot}\bvp(\omco)}\supset \co$.

Similar assertions can easily be proved for nilpotent orbits in $\g$ and property $(\eus P_1)$.

\begin{prop}      \label{prop-ast}
Suppose that $(\eus P_1)$ holds for $H$ and $\co\subset \N$. Then
\begin{enumerate}
\item $\bvp:\ov{\co}\to\h$ is finite, hence $ \ov{\bvp(\co)}=\bvp(\bco)$ and $\dim\bvp(\co)=\dim\co$; 
\item for any $e=a+b\in\ov{\co}$, one has\/ $[a,b]=0$ and\/ $\h^a\subset \h^b$;
\item the centraliser of\/ $\h$ in $\g$ is trivial; in particular, $\h$ is semisimple.
\end{enumerate}
\end{prop}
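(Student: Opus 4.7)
My plan is to prove (1) first by exploiting the conic structure of $\bco$, then derive (2) directly from the resulting finite fibres, and finally use (1)--(2) to extract property $(\eus P_2)$ so that Proposition~\ref{thm-6.1}(4) supplies (3).

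For (1), I would observe that the nilpotent-orbit closure $\bco$ is conic: every $e\in\co$ lies in an $\tri$-triple $\{e,h,f\}$, and the one-parameter subgroup $\exp(s\,\ad h)\subset G$ realises the dilation $e\mapsto e^{2s}e$ inside $\co$. Hence $\bco$ is $\bbk^*$-stable under scalar multiplication, and the linear projection $\bvp$ is $\bbk^*$-equivariant for the dilation action on $\h$. Hypothesis $(\eus P_1)$ says $\bvp^{-1}(0)\cap\bco=\{0\}$ set-theoretically; a standard graded Nakayama/Nullstellensatz argument then upgrades this to module-finiteness of $\bbk[\bco]$ over $\bvp^*(\bbk[\h])$, i.e.\ $\bvp:\bco\to\h$ is a finite morphism. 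Consequently $\bvp(\bco)=\ov{\bvp(\co)}$ is closed in $\h$ and $\dim\bvp(\co)=\dim\co$.

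For (2), I would fix $e=a+b\in\bco$ and use that $F_a:=\bvp^{-1}(a)\cap\bco$ is a finite set by (1). Because $\bvp$ is $H$-equivariant and $H^a$ fixes $a$, the subgroup $H^a$ permutes $F_a$, so its identity component $(H^a)^\circ$ acts trivially on $F_a$ and in particular fixes $e$; hence $\h^a\subset\h^e$. Since $a\in\h^a$ this gives $[a,e]=0$, i.e.\ $[a,b]=0$. Expanding $[x,e]=0$ for arbitrary $x\in\h^a$ then forces $[x,b]=0$, so $\h^a\subset\h^b$. Note this argument applies uniformly on $\bco$, not just on $\co$.

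For (3), I would first deduce $(\eus P_2)$. By (2) applied pointwise, $\bvp(\bco)\subset\N(\h)$: if $a+b$ is nilpotent with $[a,b]=0$, then both $a$ and $b$ are nilpotent. Since $\N(\h)$ carries only finitely many $H$-orbits, the closed irreducible $H$-stable $\bvp(\bco)$ contains a dense orbit $H{\cdot}a_0$; by $H$-equivariance and finiteness of $\bvp$, the preimage $\bvp^{-1}(H{\cdot}a_0)\cap\bco=H{\cdot}F_{a_0}$ is a finite union of $H$-orbits of common dimension $\dim\co$, dense in $\bco$. Thus $\co$ carries a dense $H$-orbit, i.e.\ $(\eus P_2)$ holds, after which Proposition~\ref{thm-6.1}(4) immediately delivers $\z_\g(\h)=0$ and the semisimplicity of $\h$. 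The main obstacle is the finiteness statement (1): one must carefully set up the conic/graded framework and rigorously promote the set-theoretic fibre vanishing to module-finiteness via graded Nakayama; once (1) is in hand, (2) is a short finite-fibre argument and (3) reduces to the orbit-correspondence above combined with Proposition~\ref{thm-6.1}(4).
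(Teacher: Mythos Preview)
Your arguments for (1) and (2) are correct and essentially identical to the paper's: the conical structure plus $\bbk^*$-equivariance gives finiteness of $\bvp$, and then the finite fibre forces $\h^a\subset\h^b$ and $[a,b]=0$.

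Part (3), however, has a real gap, and your route diverges from the paper's. You assert that $\bvp(\bco)\subset\N(\h)$ because ``if $a+b$ is nilpotent with $[a,b]=0$, then both $a$ and $b$ are nilpotent.'' This implication is false in general (take $b=-a$ with $a$ semisimple), and the extra hypothesis $a\in\h$, $b\in\me$ does not obviously rescue it. In the symmetric case one can push it through using the involution $\sigma$, but for an arbitrary reductive $H$ there is no such mechanism. In fact, showing that $\bvp(\bco)$ consists of nilpotents is tantamount to proving $(\eus P_1)\Rightarrow(\eus P_2)$, which in the paper is the later and harder Theorem~\ref{thm:ast-to-diamond}; its proof proceeds precisely by assuming, for contradiction, that some $\bvp(e)=a$ is \emph{semisimple} and then deriving a contradiction via centralisers. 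So you are implicitly assuming the conclusion of a theorem that has not yet been established.

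The paper's proof of (3) avoids this detour entirely: it argues directly that if $\g^{\h}\ne 0$, then $\g^{\h}$ contains a nonzero toral subalgebra, so $\h$ sits inside a proper Levi subalgebra of $\g$; consequently $\me$ contains a long root space, whence $\omin\cap\me\ne\varnothing$. Since $\omin\subset\bco$, this contradicts $(\eus P_1)$. This argument is short, self-contained, and does not require knowing $(\eus P_2)$ in advance.
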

\begin{proof}
 (1)  Here $\bvp^{-1}(\bvp(0))=\bco\cap\me=\{0\}$. Since both $\bco$ and $\ov{\bvp(\co)}$ are
$\bbk^*$-stable (=\,conical), and $\bvp$ is $\bbk^*$-equivariant, this implies 
that $\bvp$ is finite. 
 
(2) By part (1), the fibres of $\bvp$ are finite. On the other hand, for any $e\in\bco$,
\\     \centerline{
$H^a{\cdot}e=a+H^a{\cdot}b$ and $\bvp^{-1}(\bvp(e))\supset a+H^a{\cdot}b$.}
 
\noindent
Hence the orbit $H^a{\cdot}b$ is finite, i.e., $[\h^a, b]=0$. Since 
$a\in\h^a$, this also means that $[a,b]=0$.

(3) If $\bco\cap\me=\{0\}$, then $\bomin\cap\me=\{0\}$ as well. On the other hand,
if $\g^\h\ne \{0\}$, then it contains a nontrivial toral subalgebra of $\g$. Then $\h$ is contained in a 
proper Levy subalgebra of $\g$, and this implies that $\me$ contains a long root space of $\g$, 
cf{.}~\cite[Lemma\,7.1(ii)]{omin}. Hence $\omin\cap\me\ne \varnothing$.
\end{proof}

By Propositions~\ref{thm-6.1}(4) and \ref{prop-ast}(3), dealing with properties $(\eus P_1)$ and 
$(\eus P_2)$ for $\co\in\N/G$, we may assume that $H$ is semisimple.

\subsection{Property $(\eus P_2)$ implies  $(\eus P_1)$ for $\omin$}   \label{subs:P2-to-P1}
Suppose that $(\eus P_2)$ holds for $\omin$ and $\omen=H{\cdot}e$. 
Then $H{\cdot}a=\bvp(\omen)$ is dense in $\ov{\bvp(\omin)}$ and $\dim \omen=\dim\bvp(\omen)$.

It follows from Proposition~\ref{thm-6.1}(3) that, for any $e\in\omen$, we have 
$\bvp^{-1}(\bvp(e))\simeq H^a/H^e$. More precisely, upon the identification of $\h$ and $\h^*$,  
$\bvp\vert_{\omen}$ is the moment map for the homogeneous symplectic $H$-variety $\omen$. Hence $\vp\vert_{\omen}$ is a finite morphism and 
\beq   \label{eq:finite}
\text{ $\bbk[\omen]$ is a finite $\bbk[\bvp(\omen)]$-module.}
\eeq
\begin{lm}     \label{lm:no-divisors} 
If\/ $\g$ is not of type $\GR{A}{n}$, then $\bomin\setminus \omen$ \ contains no divisors.
\end{lm}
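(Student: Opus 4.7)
The plan is to exploit the factoriality of $\bomin$ together with the absence of non-constant $H$-invariants, and conclude via the standard correspondence between prime divisors on a factorial variety and irreducible semi-invariants.

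First I would reduce the question to ruling out $H$-stable prime divisors on $\bomin$. The set $\bomin\setminus\omen$ is $H$-stable (since $\omen$ is the dense $H$-orbit), so if it contained a divisor, one of its irreducible components would be an $H$-stable prime Weil divisor $D\subset\bomin$. Hence it suffices to show that $\bomin$ admits no $H$-stable prime divisor.

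Next I would invoke the hypothesis on the type. Since $\g$ is not of type $\GR{A}{n}$, the highest root $\theta$ is a fundamental weight, and so by the last bullet of Section~\ref{subs:HV} the variety $\bomin=\ov{\omin(\vlb)}$ with $\lb=\theta$ is factorial. Therefore any prime Weil divisor on $\bomin$ is the vanishing locus of an irreducible $f\in\bbk[\bomin]$, determined up to a scalar. If $D$ is $H$-stable, then $h{\cdot}f$ and $f$ have the same zero locus, whence $h{\cdot}f=\chi(h)f$ for a character $\chi\colon H\to\bbk^\ast$. By Prop.~\ref{prop-ast}(3) (or Prop.~\ref{thm-6.1}(4), whichever hypothesis we are in) $H$ is semisimple, so every character is trivial, and thus $f\in\bbk[\bomin]^H$.

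The last step is to show $\bbk[\bomin]^H=\bbk$. The restriction map $\bbk[\bomin]\to\bbk[\omen]$ is injective because $\omen$ is dense in $\bomin$; but $\omen$ is a single $H$-orbit, so $\bbk[\omen]^H=\bbk$. Hence $\bbk[\bomin]^H=\bbk$ and $f$ must be a constant, contradicting the assumption that $V(f)$ is a divisor. This rules out $H$-stable prime divisors on $\bomin$ and completes the proof.

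The main obstacle I would expect is making sure the translation between ``$D\subset \bomin\setminus\omen$ is a divisor" and ``$D$ is an $H$-stable prime Weil divisor in the factorial variety $\bomin$" is carried out cleanly; in particular, one should note that $\bomin$ is normal (Section~\ref{subs:HV}), so $\bbk[\bomin]$ is the ring of sections and the divisor/semi-invariant correspondence applies. Once factoriality is available (which is precisely where the $\GR{A}{n}$ case must be excluded, since there $\theta=\vpi_1+\vpi_n$ is not fundamental), the rest is routine.
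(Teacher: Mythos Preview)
Your argument is essentially the same as the paper's, but there is a genuine gap in one case. You assert that ``since $\g$ is not of type $\GR{A}{n}$, the highest root $\theta$ is a fundamental weight,'' and hence $\bomin$ is factorial. This is false for $\g=\spn$: there the highest root is $2\vpi_1$, not a fundamental weight, and $\mathsf{Cl}(\bomin)=\BZ_2$, so $\bomin$ is \emph{not} factorial. Your factoriality step therefore does not cover type $\GR{C}{n}$, and the proof as written is incomplete.

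The paper closes this gap by observing that, although $\mathsf{Cl}(\bomin)$ is nontrivial for $\spn$, it is still finite (namely $\BZ_2$). Hence for any prime Weil divisor $D$ some positive multiple $mD$ is principal, say $mD=\mathrm{div}(f)$, and then $D=\{f=0\}$ \emph{set-theoretically}. From that point on your argument goes through verbatim: $H$-stability of $D$ forces $f$ to be a semi-invariant, semisimplicity of $H$ makes $f$ an invariant, and $\bbk[\bomin]^H=\bbk$ (from $(\eus P_2)$) gives the contradiction. So the only missing ingredient is to replace ``factorial'' by ``$\mathsf{Cl}(\bomin)$ is torsion'' and to work with set-theoretic rather than scheme-theoretic principality.
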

\begin{proof}
If $\g$ is neither $\slno$ nor $\spn$, then the highest root is fundamental and hence the divisor class 
group $\mathsf{Cl}(\bomin)$ is trivial, see Section~\ref{subs:HV}. For $\g=\spn$, we have 
$\mathsf{Cl}(\bomin)=\BZ_2$. Therefore, if $\g\ne\slno$, then any divisor 
$D\subset\bomin\setminus \omen$ is given set-theoretically by an equation $D=\{f=0\}$ with 
$f\in\bbk[\bomin]$. Since $H$ 
is semisimple, this implies that $f\in \bbk[\omin]^H$, which contradicts condition $(\eus P_2)$.
\end{proof}

For $\g=\slno$ with $n\ge 2$, we have $\mathsf{Cl}(\bomin)=\BZ$~\cite[Theorem\,5]{vp72}. Hence the proof above does not apply.
However, it appears that the assertion of Lemma~\ref{lm:no-divisors} is valid for $\GR{A}{n}$ as well, see below.

\begin{prop}   \label{prop:ne-An}
If\/ $\g\ne\slno$ and $H$ has a dense orbit in $\omin$, then $\bvp$ is finite and hence 
$\bomin\cap\me=\{0\}$.
\end{prop}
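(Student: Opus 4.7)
The plan is to deduce finiteness of $\bvp:\bomin\to\h$ on \emph{all} of $\bomin$ by extending the restricted finiteness statement \eqref{eq:finite} across the small complement $\bomin\setminus\omen$, and then to read off $\bomin\cap\me=\{0\}$ from the conicality of $\bvp$.

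First I would check that $\bbk[\bomin]=\bbk[\omen]$. Since $\omen$ is the dense $H$-orbit in $\omin$ (hence open in $\bomin$), its complement $\bomin\setminus\omen$ is closed; by Lemma~\ref{lm:no-divisors} it contains no divisor, so each of its irreducible components has codimension $\ge 2$ in $\bomin$. Because $\bomin$ is normal (Section~\ref{subs:HV}), regular functions extend uniquely across this complement, giving $\bbk[\bomin]=\bbk[\omen]$ as subalgebras of $\bbk(\bomin)$.

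Next I would upgrade \eqref{eq:finite} to all of $\bomin$. By \eqref{eq:finite}, $\bbk[\omen]$ is finite over the image $\bvp^{\ast}(\bbk[\h])=\bbk[\ov{\bvp(\omen)}]$. Combining with $\bbk[\omen]=\bbk[\bomin]$, the ring $\bbk[\bomin]$ is finite over $\bvp^{\ast}(\bbk[\h])$, i.e.\ $\bvp:\bomin\to\h$ is a finite morphism. In particular, $\bvp^{-1}(0)=\bomin\cap\me$ is a finite set; it is $\bbk^{\ast}$-stable (because $\bvp(tx)=t\bvp(x)$ and $\bomin$ is conical) and contains $0$. A nonzero point in it would generate a one-dimensional $\bbk^{\ast}$-orbit inside $\bvp^{-1}(0)$, contradicting finiteness. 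Hence $\bomin\cap\me=\{0\}$.

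The main obstacle is not the argument above but the preceding Lemma~\ref{lm:no-divisors}: everything rests on the codimension bound on $\bomin\setminus\omen$, which in turn relies on the divisor class group $\mathsf{Cl}(\bomin)$ being finite---trivial for most $\g$ and $\BZ_{2}$ for $\g=\spn$. For $\g=\slno$ one has $\mathsf{Cl}(\bomin)=\BZ$, so the complement can in principle carry divisors, and this clean invariant-theoretic package breaks down; that case will require a different and more case-specific argument, handled separately.
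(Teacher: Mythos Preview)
Your overall strategy matches the paper's, but there is a gap in your second paragraph. You write that by \eqref{eq:finite}, $\bbk[\omen]$ is finite over $\bvp^{\ast}(\bbk[\h])=\bbk[\ov{\bvp(\omen)}]$. However, \eqref{eq:finite} actually asserts finiteness of $\bbk[\omen]$ over $\bbk[\bvp(\omen)]$, the ring of regular functions on the \emph{open} $H$-orbit $\bvp(\omen)$, not over the coordinate ring of its closure. These two rings coincide only when $\ov{\bvp(\omen)}$ is normal, which need not hold for a nilpotent orbit closure in $\h$.

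The paper supplies the missing link: since $\bvp(\omen)$ is a nilpotent $H$-orbit (Proposition~\ref{thm-6.1}), its boundary in $\ov{\bvp(\omen)}$ has codimension at least $2$, so $\bbk[\bvp(\omen)]$ is the integral closure of $\bbk[\ov{\bvp(\omen)}]$ in its fraction field, and in particular is a finite module over it. With this extra step the chain
\[
\bbk[\ov{\bvp(\omin)}]\ \subset\ \bbk[\bvp(\omen)]\ \subset\ \bbk[\omen]=\bbk[\bomin]
\]
consists of finite extensions, and finiteness of $\bvp$ on all of $\bomin$ follows. Once you insert this observation, your argument coincides with the paper's.
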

\begin{proof}
Since $\codim (\bomin\setminus\omen)\ge 2$ and $\bomin$ is normal, we have
$\bbk[\bomin]=\bbk[\omen]$.

By assumption, $\ov{\bvp(\omin)}$ is the closure of the nilpotent $H$-orbit $\bvp(\omen)$ and
therefore $\codim \bigl(\ov{\bvp(\omin)})\setminus \bvp(\omen)\bigr)\ge 2$. In this case,
$\bbk[\bvp(\omen)]$ is the integral closure of $\bbk[\ov{\bvp(\omin)}]$ in the fraction field
$\bbk(\ov{\bvp(\omin)})$~\cite[3.7]{bokr}. 

Using Eq.~\eqref{eq:finite} and previous finite extensions,
we conclude that $\bbk[\bomin]$ is a finite $\bbk[\ov{\bvp(\omin)}]$-module and  hence $\bvp$ is finite.
\end{proof}

For the case of $\g=\slno$, more strong and explicit results are obtained.
We use below some results on sheets in semisimple Lie algebras developed in~\cite{schichten, bokr}. 
Recall that a {\it sheet\/} $\eus S\subset \g$ is a maximal irreducible subset consisting of orbits of the 
same dimension. Each sheet is a locally closed subvariety of $\g$ that contains a unique nilpotent orbit~\cite[5.8]{bokr}. 
But it can happen that $\co\subset\N$ belongs to several sheets. A nilpotent orbit is said to be 
{\it rigid}, if the orbit itself is a sheet.

\begin{prop}   \label{diamond-An}
If $H\subset SL_{n+1}$ has property $(\eus P_2)$ for some $\co\subset\N$, then 
$\co=\omin$, $n+1=2m$,  and $H=Sp_{2m}$ is a symmetric subgroup of $SL_{2m}$. Hence $\bvp$ is
finite and $(\eus P_1)$ is also satisfied.
\end{prop}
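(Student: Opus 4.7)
The plan is to reduce $(\eus P_2)$ in type $A$ to a dense-orbit problem on a partial flag variety of $SL_{n+1}$, and then invoke the classifications of Onishchik~\cite{on69} and Kimelfeld~\cite{kim79,kim}. By Proposition~\ref{thm-6.1}(4) we may take $H$ connected semisimple. Fix $e \in \co \subset \N(\slno)$ with $\omco = H{\cdot}e$ dense in $\co$, and set $V = \bbk^{n+1}$. The stabiliser $G^e$ preserves both the kernel filtration $0 \subset \Ker e \subset \Ker e^2 \subset \cdots \subset V$ and the image filtration, giving an $SL_{n+1}$-equivariant morphism $\tau_\co \colon \co \to SL_{n+1}/Q$ to the partial flag variety determined by this data. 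A dense $H$-orbit in $\co$ thus produces an open $H$-orbit on $SL_{n+1}/Q$ and on each of its Grassmannian projections.

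I would first treat $\co = \omin$. Here $e$ has a single Jordan block of size $2$, so $\Ima e$ is a line, $\Ker e$ is a hyperplane, $\Ima e \subset \Ker e$, and $Q = P_{1,n}$. Projecting to $\BP(V) = \BP^n$ shows that $H$ acts with an open orbit on $\BP^n$. Onishchik's classification of connected semisimple proper subgroups of $SL_{n+1}$ acting densely on $\BP^n$ yields a short list of candidates ($H = Sp_{2m}$ with $n+1 = 2m$, $H = SO_{n+1}$, $H = SL_n$ as a vector stabiliser, and a few others). For each non-symplectic candidate one verifies $\omin \cap \me \ne \varnothing$ and appeals to \cite[Theorem\,4.6]{omin}: for instance in the symmetric case $(\slno, {\mathfrak{so}}_{n+1})$ the symmetric rank-$1$ nilpotent matrices have the form $v v^{T}$ with $v$ isotropic, and such $v$ always exist. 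Only $H = Sp_{2m}$ survives, and for this symmetric pair \cite[Theorem\,4.6]{omin} gives $\omin \cap \me = \varnothing$, hence $(\eus P_1)$.

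For $\co \ne \omin$, either $e^2 = 0$, in which case $\co = \co(2^a, 1^{n+1-2a})$ with $a \ge 2$ and $\tau_\co$ factors through $\mathrm{Gr}(a, V)$; no proper connected semisimple subgroup of $SL_{n+1}$ acts densely on $\mathrm{Gr}(a, n+1)$ for $2 \le a \le n-1$ (in particular the $Sp_{2m}$-orbits on $\mathrm{Gr}(a, 2m)$ are indexed by the rank of the restricted symplectic form, and none is dense). Or $e^2 \ne 0$, in which case the kernel filtration has $\ge 3$ terms, $Q$ is non-maximal, and Kimelfeld's classification~\cite{kim79,kim} excludes every candidate. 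The main obstacle I anticipate is the systematic elimination in the first step of the non-symplectic semisimple subgroups of $SL_{n+1}$ acting densely on $\BP^n$; the Grassmannian analysis in the second step is more routine. The conclusion is that $(H, \co) = (Sp_{2m}, \omin) \subset SL_{2m}$ is the unique possibility, and $(\eus P_1)$ is automatic, whence $\bvp$ is finite by Proposition~\ref{prop-ast}(1).
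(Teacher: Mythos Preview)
Your overall strategy is quite different from the paper's, and the case $e^2=0$ with $a\ge 2$ contains a genuine error. You assert that no proper connected semisimple subgroup of $SL_{n+1}$ has a dense orbit on $\mathrm{Gr}(a,n+1)$ for $2\le a\le n-1$, and specifically that ``none of the $Sp_{2m}$-orbits on $\mathrm{Gr}(a,2m)$ is dense''. But $Sp_{2m}$ acts on $\mathrm{Gr}(a,2m)$ with \emph{finitely many} orbits (indexed precisely by the rank of the restricted symplectic form), so the maximal-rank stratum is open and hence dense. Your Grassmannian projection therefore does not exclude $(Sp_{2m},\co(2^a,1^{2m-2a}))$ for $a\ge 2$; at minimum you would need to analyse the full isotropic-flag map $e\mapsto(\Ima e\subset\Ker e)$ or find another obstruction. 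A secondary issue: the Kimelfeld references~\cite{kim79,kim} concern flag varieties of \emph{orthogonal} groups, not of $SL_{n+1}$, so your appeal to them in the $e^2\ne 0$ case is a misattribution.

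The paper's argument avoids all such case analysis by exploiting a feature peculiar to type $A$: every nilpotent orbit $\co\subset\slno$ lies in a unique sheet $\eus S_\co$, and this sheet always contains semisimple elements. For semisimple $x\in\eus S_\co$ one forms the cone $X=\ov{\bbk^\ast(SL_{n+1}{\cdot}x)}$, which has $X\cap\N=\bco$ and $\dim X=\dim\co+1$. A dense $H$-orbit in $\co$ forces a dense $H$-orbit on each closed affine orbit $SL_{n+1}{\cdot}(\ap x)$; by Luna~\cite{L72} this dense orbit is the whole orbit, so $(H,(SL_{n+1})^x)$ is a genuine Onishchik decomposition of $SL_{n+1}$ with $(SL_{n+1})^x$ a \emph{Levi} subgroup. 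Onishchik's tables~\cite{on62,on69} then force $(SL_{n+1})^x\simeq GL_n$, $n+1=2m$, $H=Sp_{2m}$, and the sheet attached to the Levi $GL_n$ is exactly the one containing $\omin$. This pins down both $H$ and $\co$ simultaneously, with no separate treatment of partition types or flag varieties.
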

\begin{proof}
It is known that each nilpotent orbit $\co\subset \N(\slno)$ belongs to a unique sheet, say $\eus S_\co$, 
and that every sheet contains semisimple elements. Let $x\in \eus S_\co$ be semisimple. 

Consider the closed cone $X=\ov{\bbk^*(SL_{n+1}{\cdot}x)}\subset \slno$. We have $\dim X=\dim\co+1$ and 
$X\cap\N=\bco$~\cite[Satz\,3.4]{bokr}. If $H$ has a dense orbit in $\co$, then 
$\max_{x\in X} \dim H{\cdot}x=\dim \co$ and hence $H$ has a dense orbit in every closed orbit 
$SL_{n+1}{\cdot}(\ap x)$ with $\ap\in \bbk^\ast$. Since both $H$ and $(SL_{n+1})^x$ are reductive, the 
$H$-action on the affine variety $SL_{n+1}{\cdot}(\ap x)\simeq SL_{n+1}/(SL_{n+1})^x$ is  transitive~\cite{L72}. 
Therefore, the pair of {\sl reductive} subgroups $(H,(SL_{n+1})^x)$ provides a {\it decomposition\/} of 
$SL_{n+1}$ in the sense of A.L.\,Onishchik. In our case, $(\slno)^x$ is a Levi subalgebra of $\slno$. By 
Onishchik's classification of decompositions~\cite[\S\,4]{on62}, the only possibility is that $(\slno)^x=\gln$, 
$n+1=2m$, and $\h=\mathfrak{sp}_{2m}$. In particular, $(\slno)^x$ is a maximal Levi and the nilpotent 
orbit in the corresponding sheet is $\omin$. Since $H=Sp_{2m}$ is a symmetric subgroup of $SL_{2m}$, 
both properties $(\eus P_1)$ and $(\eus P_2)$ hold for $(H,\omin)$, see~\cite[Section\,5]{omin}. 
\end{proof}
\begin{rema}
In~\cite{on62}, the classification of decompositions is obtained for the compact Lie groups and algebras.
Afterwards, it was shown in~\cite[\S\,2]{on69} that this classification remains the same for the complex 
reductive algebraic groups.
\end{rema}
\noindent
It is known that $\omin$ is rigid unless $\g=\slno$. Hence the previous proof does not work for $\omin$ in 
the other simple Lie algebras. However, the same argument proves the following.

\begin{lm}    \label{lm:sheet}
Let $\gS_\co$ be a sheet of $\g$ that contains a nilpotent $G$-orbit $\co$. If $H\subset G$ has a dense 
orbit in $\co$, then the same is true for any $G$-orbit $\hat\co$ in $\gS_\co$. In particular, if\/ $\gS_\co$ 
contains semisimple elements and $\hat\co=G/K$ is a semisimple orbit in $\gS_\co$, then $HK=G$, i.e.,
$(H,K)$ is a decomposition of $G$ in the sense of Onishchik.
\end{lm}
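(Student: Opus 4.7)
My plan is to transport the proof of Proposition~\ref{diamond-An} from the $\slno$-setting to an arbitrary sheet. Pick an arbitrary $G$-orbit $\hat\co\subseteq\gS_\co$ and a point $\hat y\in\hat\co$. Recall from the Borho--Kraft theory of sheets~\cite{schichten,bokr} that $\gS_\co$ is an irreducible locally closed subvariety of $\g$ and every $G$-orbit in it has dimension $\dim\co$; in particular $\dim H{\cdot}y\le\dim\co$ for all $y\in\gS_\co$.

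First I would form the closed cone $X_{\hat y}:=\ov{\bbk^{\ast}{\cdot}G{\cdot}\hat y}\subset\g$ and invoke \cite[Satz\,3.4]{bokr} to obtain $\dim X_{\hat y}=\dim\co+1$ and $X_{\hat y}\cap\N=\bco$, so that $\bco\subseteq X_{\hat y}$. By lower semi-continuity of orbit dimensions, the subset
\[
V:=\{y\in X_{\hat y}:\dim H{\cdot}y=\dim\co\}
\]
is Zariski-open in $X_{\hat y}$. Since the dense $H$-orbit $\omco\subset\co\subset\bco\subset X_{\hat y}$ lies in $V$, we have $V\ne\varnothing$ and, by irreducibility of $X_{\hat y}$, $V$ is dense. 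Observe that $V$ is both $H$-stable and $\bbk^{\ast}$-stable, the latter because scaling on $\g$ commutes with the $H$-action and preserves orbit dimensions.

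Next I would restrict to the dense open subset $\bbk^{\ast}{\cdot}G{\cdot}\hat y\subset X_{\hat y}$, using the $\bbk^{\ast}{\times}G$-equivariant isomorphism $\bbk^{\ast}{\cdot}G{\cdot}\hat y\simeq \bbk^{\ast}\times(G/G^{\hat y})$ in which $\bbk^{\ast}$ acts on the first factor and $G$ on the second. Any $\bbk^{\ast}$-stable open subset thus has the form $\bbk^{\ast}\times W$. Applying this to $V\cap(\bbk^{\ast}{\cdot}G{\cdot}\hat y)$ yields a non-empty $H$-stable open $W\subseteq G/G^{\hat y}$, whence $H$ has a dense orbit in $G/G^{\hat y}=\hat\co$. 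For the ``in particular'' clause, a semisimple orbit $\hat\co=G/K$ is a closed affine $G$-variety with $K$ reductive; hence by Luna's criterion~\cite{L72} (or equivalently by Onishchik's decomposition theory~\cite{on62,on69}), a dense $H$-orbit in $G/K$ upgrades to a transitive action, so $HK=G$.

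The main technical obstacle lies in the cone step for $\hat y$ that is not semisimple, since \cite[Satz\,3.4]{bokr} is cleanest for semisimple $x\in\gS_\co$, where the identity $X_x\cap\N=\bco$ follows transparently. For a mixed element $\hat y=\hat y_s+\hat y_n$ one has to reproduce $X_{\hat y}\cap\N=\bco$ from the Jordan-class description of the sheet; alternatively one can replace $X_{\hat y}$ by the closure of the $G$-saturation of the $\bbk$-family $\{t\hat y_s+\hat y_n:t\in\bbk\}$, which still contains $\hat\co$ at $t=1$ and $\bco$ in its boundary, and is amenable to the same $\bbk^\ast$-semi-continuity argument.
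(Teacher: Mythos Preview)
Your approach is exactly the paper's: its proof is literally the sentence ``the same argument proves the following,'' pointing back to Proposition~\ref{diamond-An}, and you have carried out that transport in detail. For semisimple $\hat y$ (hence for the ``in particular'' clause, which is all the paper ever uses) your argument is complete and matches the paper's use of~\cite[Satz\,3.4]{bokr} and~\cite{L72}.

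Two small corrections. First, the identification $\bbk^{\ast}{\cdot}G{\cdot}\hat y\simeq\bbk^{\ast}\times G/G^{\hat y}$ need not hold literally, since finitely many scalars may carry $\hat y$ into its own $G$-orbit; but you do not need it. As $V$ is open, dense in $X_{\hat y}$, and $\bbk^{\ast}$-stable, it already meets $G{\cdot}\hat y$, and any point there has an $H$-orbit of dimension $\dim\hat\co$, hence open and dense in $\hat\co$. Second, your fallback family $\{t\hat y_s+\hat y_n:t\in\bbk\}$ for mixed $\hat y$ degenerates at $t=0$ to $G{\cdot}\hat y_n$, which is the $G$-saturation of the rigid Levi orbit $\co_\el$ and is in general strictly smaller than the induced orbit $\co=\mathrm{Ind}_{\el}^{\g}\co_\el$; so that route does not place $\bco$ in the boundary. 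The inclusion $\bco\subset\ov{\bbk^{\ast}G{\cdot}\hat y}$ for arbitrary $\hat y\in\gS_\co$ does hold, but one must invoke the induced-orbit description of sheets in~\cite{schichten,bokr} rather than Satz~3.4 alone; you are right to flag this as the genuine extra step.
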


\begin{ex}   \label{ex:plast}
By Theorem~\ref{thm:non-minim-good} below, the very even orbits $\co(2^{2k})_{\sf I}, \co(2^{2k})_{\sf II}
\subset \mathfrak{so}_{4k}$ have property 
$(\eus P_2)$ with respect to $(G,H)=(SO_{4k},SO_{4k-1})$. If $\co$ is either of them, then the 
semisimple orbits in $\gS_\co$ have stabilisers isomorphic to $GL_{2k}$. (There are two conjugacy classes of Levi subgroups of type $GL_{2k}$ in $SO_{4k}$.)
This means that $(SO_{4k-1}, GL_{2k})$ 
yields a decomposition of $SO_{4k}$. Of course, this decomposition occurs in~\cite{on62}.
\end{ex}

\noindent
It follows from Proposition~\ref{diamond-An} that $\bomin\setminus\omen$ contains no 
divisors for $SL_n$. Hence, {\sl a posteriori}, Lemma~\ref{lm:no-divisors} and 
Proposition~\ref{prop:ne-An} apply to $SL_n$ as well. 
Propositions~\ref{prop:ne-An} \& \ref{diamond-An} yield implication
$(\eus P_2) \Rightarrow (\eus P_1)$ for $\omin$ and all simple Lie algebras.

\subsection{The general implication  $(\eus P_1) {\Rightarrow} (\eus P_2)$}   
\label{subs:P1-to-P2}

\begin{thm}       \label{thm:ast-to-diamond}
Let $\co$ be a nilpotent $G$-orbit. If\/ $\bco\cap\me=\{0\}$, then $H$ has a dense orbit in $\co$.
\end{thm}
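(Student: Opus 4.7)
The plan is to promote the finiteness of $\bvp$ on $\bco$ (Proposition~\ref{prop-ast}(1)) into a statement about $H$-orbits. Since the Killing form is $H$-invariant, the splitting $\g=\h\oplus\me$ is $H$-stable and the map $\bvp$ is $H$-equivariant. Hence $Z:=\bvp(\bco)$ is a closed, irreducible, $H$-stable subvariety of $\h$, and again by Proposition~\ref{prop-ast}(1) one has $\dim Z=\dim\co$. In particular $Z$ contains a dense (hence open) $H$-orbit, which I shall denote by $\mathcal{O}':=H{\cdot}a_0$ for some $a_0\in Z$.

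The key step is to transport this dense orbit through $\bvp$. Set $Y:=\bvp^{-1}(\mathcal{O}')\cap\bco$. Then $Y$ is open, dense, irreducible and $H$-stable in $\bco$, and $\bvp|_Y\colon Y\to\mathcal{O}'$ is a finite, surjective, $H$-equivariant morphism. For any point $e=a_0+b\in Y$, the fact that $H$ respects the direct sum $\h\oplus\me$ gives $H^e=H^{a_0}\cap H^b$, and Proposition~\ref{prop-ast}(2) furnishes the centraliser inclusion $\h^{a_0}\subset\h^b$; thus $\dim H^e=\dim H^{a_0}$, whence
\[
\dim H{\cdot}e \;=\; \dim\mathcal{O}' \;=\; \dim\co \;=\; \dim Y.
\]
So every $H$-orbit inside $Y$ is of top dimension and therefore open in $Y$. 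By irreducibility of $Y$, there is only one such orbit, i.e.\ $Y$ itself is a single $H$-orbit.

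To finish, note that both $Y$ and $\co$ are open dense subsets of the irreducible variety $\bco$, so they meet; since $Y$ is a single $H$-orbit, $Y\subset\co$, and this $Y$ is the required dense $H$-orbit in $\co$, which is $\Omega_\co$. The only delicate point is ruling out lower-dimensional $H$-orbits inside $Y$ produced by the finite cover $\bvp|_Y$, and this is precisely where the centraliser inclusion $\h^{a_0}\subset\h^b$ coming from $(\eus P_1)$ is used. Beyond this bookkeeping I do not anticipate a real obstacle: the proof is in essence the observation that a finite, equivariant, dominant morphism between irreducible $H$-varieties of the same dimension carries a dense $H$-orbit back to a dense $H$-orbit.
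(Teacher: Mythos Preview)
Your argument contains a genuine gap at its very first substantive step: you assert that $Z=\bvp(\bco)\subset\h$ ``contains a dense (hence open) $H$-orbit,'' but you give no reason for this. The equality $\dim Z=\dim\co$ says nothing about the $H$-orbit structure of $Z$; an irreducible, conical, $H$-stable subvariety of $\h$ need not have a dense $H$-orbit (e.g.\ $\h$ itself, or the closure of $\bbk^\ast(H{\cdot}s)$ for a generic semisimple $s$). In fact, once one knows that $Z$ has a dense $H$-orbit, the remainder of your argument (pulling it back through the finite map $\bvp$ using the inclusion $\h^a\subset\h^b$) is correct and routine; so the claim you leave unproved is essentially equivalent to the theorem itself.

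What is missing is precisely the fact that $Z\subset\N(\h)$, i.e.\ that $\bvp(e)$ is \emph{nilpotent} for every $e\in\bco$. Granted this, $Z$ lies in $\N(\h)$, which has only finitely many $H$-orbits, and then your argument goes through. The paper establishes nilpotency by contradiction: if $H$ has no dense orbit in $\co$, then $\bbk[\bvp(\bco)]^H\ne\bbk$ (via Rosenlicht and the finiteness of $\bvp$), so $Z$ contains a nonzero closed $H$-orbit, i.e.\ a nonzero \emph{semisimple} element $a$. One then passes to the centraliser $\g^{\el}$ of $\el=\h^a$ in $\g$, observes that $\h^\el\subset\g^\el$ is merely a torus, and uses the elementary fact that in that situation every nilpotent orbit of $Z_G(\el)$ meets the complementary subspace $\me^\el$---contradicting $(\eus P_1)$. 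This reduction to a toral centraliser is the real content of the proof, and it is absent from your proposal.
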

\begin{proof}
Assume that $H$ does not have a dense orbit in $\co$. By Rosenlicht's theorem~\cite[1.6]{brion},
this means that $\bbk(\co)^H\ne \bbk$. Since $\bbk(\co)^H$ is the quotient field of 
$\bbk[\co]^H$~\cite[Prop.~2.9]{AP} and $\bbk[\co]$ is the integral closure of $\bbk[\bco]$~\cite[3.7]{bokr}, 
we see that $\bbk[\bco]^H\ne \bbk$. Since $\bvp$ is finite, this implies that
$\bbk[\bvp(\bco)]^H\ne \bbk$ as well. Hence the affine variety $\bvp(\bco)\subset\h$ contains non-trivial closed $H$-orbits.

Therefore, there is $e=a+b\in\bco$ such that $0\ne \bvp(e)=a\in\h$ is semisimple. 
Then $\el:=\h^a$ is a Levi subalgebra of $\h$. Note that $\z_\h(\el):=\h^\el$ is the centre of $\el$ and 
$a\in \z(\el)$ is a generic element of this centre. Consider also $\g^\el$, the centraliser of $\el$ in $\g$. 
Then $\g^\el=\h^\el\oplus\me^\el$ and $e,a,b\in \g^\el$. Moreover, $e$ is a nilpotent element of $\g^\el$. 

Let $Z_G(\el)$ be the connected subgroup of $G$ with Lie algebra $\z_\g(\el)=\g^\el$. Then 
$Z_G(\el){\cdot}e\subset \g^\el$ and
$(Z_G(\el){\cdot}e)\cap \me^\el\subset G{\cdot}e\cap \me=\varnothing$. On the other hand, $\h^\el$ is 
toral subalgebra of $\g^\el$. Therefore, every nilpotent $Z_G(\el)$-orbit in $\g^\el$ meets the 
complementary $\h^\el$-stable subspace $\me^\el$. This contradiction proves the assertion.
\end{proof}

\begin{cl}
If\/ $\bco\cap\me=\{0\}$, then $H$ has a dense orbit in any $G$-orbit $\co'\subset\bco$.
\end{cl}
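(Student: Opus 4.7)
The plan is to reduce this corollary to Theorem~\ref{thm:ast-to-diamond} by checking that property $(\eus P_1)$ is inherited by every $G$-orbit $\co'$ sitting inside $\bco$. The key observation is that $(\eus P_1)$ is a condition on the closure, so it behaves monotonically under passage to closed $G$-stable subsets.

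First I would note that if $\co'\subset \bco$ is a $G$-orbit, then $\ov{\co'}$ is a closed $G$-stable subset of $\bco$, and in particular $\ov{\co'}\subset \bco$. Since $\co\subset\N$ and $\N$ is closed, we also have $\co'\subset\N$, so $\co'$ is a nilpotent $G$-orbit eligible for Theorem~\ref{thm:ast-to-diamond}.

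Next I would observe the chain of inclusions
\[
\ov{\co'}\cap\me \;\subset\; \bco\cap\me \;=\;\{0\},
\]
so $\ov{\co'}\cap\me=\{0\}$. This is exactly property $(\eus P_1)$ for the pair $(H,\co')$. Applying Theorem~\ref{thm:ast-to-diamond} to $\co'$ then yields that $H$ has a dense orbit in $\co'$, which is what we want. There is no real obstacle here: the content is entirely in Theorem~\ref{thm:ast-to-diamond}, and the corollary is a clean two-line consequence of the fact that the defining condition of $(\eus P_1)$ is preserved when one replaces $\co$ by any nilpotent $G$-orbit it contains in its closure.
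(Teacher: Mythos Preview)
Your argument is correct and is exactly the intended one: the paper states this corollary immediately after Theorem~\ref{thm:ast-to-diamond} with no separate proof, since $\ov{\co'}\subset\bco$ gives $\ov{\co'}\cap\me\subset\bco\cap\me=\{0\}$, and the theorem then applies to $\co'$.
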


We shall show in Section~\ref{subs:non-min} that there are series of non-minimal orbits 
$\co\in\N/G$ such that $(\eus P_1)$ holds for some $H\subset G$.

Combining Propositions~\ref{prop:ne-An}, \ref{diamond-An}, and Theorem~\ref{thm:ast-to-diamond}, we obtain 

\begin{thm}   \label{thm:main2}
Let $H$ be a connected reductive subgroup of $G$ and\/ $\co\in\N/G$. Then
 \\[.6ex] 
\centerline{
$(\eus P_1)$: \ $\bomin\cap\me=\{0\}$  \  $\Longrightarrow$ \
$(\eus P_2)$: \ $H$ has a dense orbit in $\omin$.}
For $\co=\omin$, the implication $(\eus P_2) \Rightarrow (\eus P_1)$ is also true. If either of these properties holds, then $H$ is necessarily semisimple.
\end{thm}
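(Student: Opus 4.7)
My plan is to assemble this theorem directly from the three main propositions already established in the section, treating it essentially as a summary statement for $\co=\omin$.

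For the implication $(\eus P_1) \Rightarrow (\eus P_2)$, I would simply specialize Theorem~\ref{thm:ast-to-diamond} to $\co=\omin$; the general assertion is stronger, so no additional work is needed here. For the converse $(\eus P_2) \Rightarrow (\eus P_1)$, I would split according to the type of $\g$. If $\g \ne \slno$, the argument goes through Proposition~\ref{prop:ne-An}: normality of $\bomin$ together with Lemma~\ref{lm:no-divisors} gives $\bbk[\bomin] = \bbk[\omen]$; combined with the finiteness of $\bvp|_{\omen}$ recorded in \eqref{eq:finite} and the integral-closure description of $\bbk[\bvp(\omen)]$ from the sheets theory of \cite{bokr}, one deduces that $\bvp$ is finite on $\bomin$, so $\bomin \cap \me = \bvp^{-1}(0) = \{0\}$. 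If instead $\g = \slno$, the divisor-class argument of Lemma~\ref{lm:no-divisors} fails, and I would appeal to Proposition~\ref{diamond-An}: the classification of sheets in $\slno$ and Onishchik's classification of decompositions of reductive groups force $n+1=2m$ and $H = Sp_{2m}$, which is symmetric, and for this pair both properties are already known to hold by \cite{omin}.

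For the semisimplicity conclusion, I would invoke the two complementary statements already in hand. If $(\eus P_1)$ holds, then Proposition~\ref{prop-ast}(3) produces a contradiction when $\z_\g(\h) \ne \{0\}$, by exhibiting a long root space of $\g$ inside $\me$ and hence a point of $\omin \cap \me$. If $(\eus P_2)$ holds instead, the same conclusion is supplied by Proposition~\ref{thm-6.1}(4). Either way $\h$ is semisimple.

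The main obstacle is not in this final assembly but in the preceding case distinction by type, and specifically in the $\g = \slno$ case: because $\mathsf{Cl}(\bomin) = \BZ$ there, the clean divisorial argument that works uniformly for all other simple types breaks down, and one is forced to bring in the sheet-theoretic description together with Onishchik's classification to rule out all pairs $(H,\co)$ in type $\GR{A}{}$ other than the symmetric pair $(Sp_{2m}, SL_{2m})$ with $\co = \omin$. All remaining steps are bookkeeping that follows formally from the results already established.
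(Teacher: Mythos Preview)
Your proposal is correct and matches the paper's own proof, which is the one-line assembly ``Combining Propositions~\ref{prop:ne-An}, \ref{diamond-An}, and Theorem~\ref{thm:ast-to-diamond}''. You have simply spelled out in more detail how those three results fit together and added the explicit references to Propositions~\ref{thm-6.1}(4) and~\ref{prop-ast}(3) for the semisimplicity clause, which the paper leaves implicit.
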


\noindent
{\bf Remark.} It is not yet clear whether $(\eus P_2)$ implies $(\eus P_1)$ in general.

\subsection{Computation of $\deg\bvp$}   
\label{subs:deg}
If $\bco\cap\me=\{0\}$, then $\bvp$ is a finite morphism and the integer $\deg\bvp$ is well-defined. It 
is proved in \cite[Prop.\,5.10]{omin} that $\deg\bvp=2$ for the symmetric pairs and $\co=\omin$.
More precisely, in that case $\bvp(\bomin)$ is always normal and $\bvp$ is the quotient map w.r.t. 
the group $\BZ_2$ generated by the involution $\sigma\in GL(\g)$.

\begin{prop}          \label{prop:degree}
If \ $\bco\cap\me=\{0\}$ and $\omco$ is the dense $H$-orbit in $\co$, then 
\[ 
  \deg\bvp=\#\pi_1(\bvp(\omco))/\#\pi_1(\co) , 
\]
where $\pi_1(\cdot)$ stands for the (algebraic) fundamental group. 
\end{prop}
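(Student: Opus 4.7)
The plan is to reduce $\deg\bvp$ to the degree of its restriction $\bvp|_{\omco}$, which is a finite \'etale covering of homogeneous $H$-spaces, to read off the latter from the component groups of the relevant stabilisers, and finally to pass from $\pi_1(\omco)$ to $\pi_1(\co)$ by a codimension argument.

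First, by Proposition~\ref{prop-ast}(1), $\bvp:\bco\to\ov{\bvp(\co)}$ is finite. Fix $e=a+b\in\omco$. From $\g=\h\oplus\me$ one reads off $H^e=H^a\cap H^b$, so $H^e\subset H^a$, and $\bvp|_{\omco}:H/H^e\to H/H^a$ is an $H$-equivariant finite surjection between smooth homogeneous spaces, hence \'etale in characteristic zero. I would next show that $\deg\bvp=\deg(\bvp|_{\omco})$: any point of $\bco\setminus\omco$ lies in a subvariety of dimension strictly less than $\dim\omco$, whose image under the finite map $\bvp$ has the same smaller dimension; the bad locus $\bvp(\bco\setminus\omco)\cap\bvp(\omco)$ is therefore a proper closed subset of $\bvp(\omco)$, and off it every fibre of $\bvp$ lies entirely in $\omco$. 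Counting the $H^a$-orbit of $e$ in $\omco$ then gives $\deg\bvp=[H^a:H^e]$.

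To convert this index into fundamental groups, I would invoke Propositions~\ref{thm-6.1}(3) and~\ref{prop-ast}(2), which yield $\h^e=\h^a$ and hence $(H^e)^\circ=(H^a)^\circ$. After replacing $H$ by its simply connected cover (harmless for orbits and component groups), the long exact sequence of the fibration $K\hookrightarrow H\to H/K$ identifies $\pi_1(H/K)\cong K/K^\circ$ for every closed $K\subset H$. Therefore
\[
\deg\bvp=[H^a:H^e]=\frac{|H^a/(H^a)^\circ|}{|H^e/(H^e)^\circ|}=\frac{\#\pi_1(\bvp(\omco))}{\#\pi_1(\omco)}.
\]

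The main obstacle is then to replace $\pi_1(\omco)$ by $\pi_1(\co)$. Since $\omco$ is dense open in the smooth variety $\co$, Zariski purity for the algebraic fundamental group gives $\pi_1(\omco)\cong\pi_1(\co)$ as soon as the complement has codimension $\ge 2$ in $\co$. I would deduce the codimension bound from a parity argument: every $H$-orbit $\co''\subset\co\subset\g\cong\g^*$ is, via the Killing form, a coadjoint $H$-orbit and thus carries the KKS symplectic form, so $\dim\co''$ is even. Since $\dim\omco=\dim\co$ is likewise even and $\dim\co''<\dim\omco$ for every $\co''\subset\co\setminus\omco$, the gap is at least $2$. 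Purity then delivers $\pi_1(\omco)=\pi_1(\co)$, completing the argument.
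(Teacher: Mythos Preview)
Your reduction of $\deg\bvp$ to $[H^a:H^e]$ and then to $\#\pi_1(\bvp(\omco))/\#\pi_1(\omco)$ is correct, if more laborious than the paper's one-line observation that $\bvp|_{\omco}$ is an unramified covering of connected varieties.

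The genuine gap is in your codimension argument. An $H$-orbit $\co''\subset\co\subset\g$ is \emph{not} a coadjoint $H$-orbit: the coadjoint representation of $H$ lives in $\h^*$, not in $\g^*$, and $H$-orbits in $\g$ have no reason to be even-dimensional (think of a maximal torus of $SL_2$ acting on the regular nilpotent orbit). So the KKS argument, as stated, fails. The paper obtains $\codim_{\co}(\co\setminus\omco)\ge 2$ from the \emph{target} side: $\bvp(\omco)$ is an honest nilpotent $H$-orbit in $\h$, so $\bvp(\bco)\setminus\bvp(\omco)$ contains no divisors; since $\bvp$ is finite, the same holds for the preimage in $\bco$, and one checks (using Proposition~\ref{prop-ast}(2)) that $\bvp^{-1}(\bvp(\omco))=\omco$. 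Then Zariski--Nagata purity gives $\pi_1(\omco)=\pi_1(\co)$.

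Incidentally, your parity \emph{conclusion} can be rescued, but only by invoking $(\eus P_1)$ through Proposition~\ref{prop-ast}(2): for every $x\in\bco$ one has $\h^x=\h^{\bvp(x)}$, hence $\dim H{\cdot}x=\dim H{\cdot}\bvp(x)$, and the right-hand side is even because $H{\cdot}\bvp(x)\subset\h$ \emph{is} a coadjoint $H$-orbit. This is essentially the paper's argument transported back to the source; it does not follow from the blanket symplectic reasoning you gave.
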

\begin{proof}
Since $\bvp\vert_{\omco}$ is an unramified covering, $\deg\bvp=\#\pi_1(\bvp(\omco))/\#\pi_1(\omco)$. 
On the other hand, the morphism $\bvp$ is finite (Theorem~\ref{thm:ast-to-diamond}) and 
$\bvp(\bco)\setminus \bvp(\omco)$ does not contain divisors. Hence 
$\codim(\co\setminus \omco)\ge 2$ as well. By the purity of branch locus (Zariski--Nagata), we then 
have $\pi_1(\co)=\pi_1(\omco)$.
\end{proof}

The fundamental groups of the nilpotent orbits are known~\cite[Chap.\,6.1]{CM}, and $\deg\bvp$ can 
easily be computed via Proposition~\ref{prop:degree}, see Table~\ref{table:odin} and 
Example~\ref{ex:deg-non-min} below.

\subsection{Commutative planes and property $(\eus P_2)$}   
\label{subs:com-pl}
Suppose that $\co\in\N/G$ and $H$ has a dense orbit in $\co$. If $e=a+b\in\omco$, then $a,b\in\N$ and
$[a,b]=0$, see Prop.~\ref{thm-6.1}. 
Hence $\cP:=\langle a,b\rangle$ is an abelian subalgebra of $\g$ and $\cP\subset\N$. We 
say that $\cP$ is a {\it commutative plane related to\/} $(H,\co)$ with property $(\eus P_2)$.

\begin{thm}    \label{thm:P-dense}
If $e=a+b\in\omco$ and $\cP=\lg a,b\rg$ is the related commutative plane, then $b\in\ov{G{\cdot}a}$
and $G{\cdot}a\cap\cP$ is 
dense in $\cP$.
\end{thm}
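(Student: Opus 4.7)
The plan is to deduce the theorem from classical $\tri$-theory together with the standard density statement for the $2$-eigenspace of a characteristic. The key observation is that $a$ and $b$ lie in the same eigenspace of a single characteristic, namely $h_a$.

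First, extend $a$ to an $\tri$-triple $\{a,h_a,f_a\}\subset\h$ via Jacobson--Morozov in $\h$. By Proposition~\ref{thm-6.1}(5), one has $[h_a,b]=2b$, while $[h_a,a]=2a$ holds by construction. Hence the commutative plane $\cP=\lg a,b\rg$ sits inside $\g(2)$, where $\g(k)$ denotes the $k$-eigenspace of $\ad h_a$ on $\g$. Since $\h$ is reductive in $\g$, the triple $\{a,h_a,f_a\}$ is also an $\tri$-triple in $\g$; in particular, $h_a$ is a characteristic of $a$ regarded as a nilpotent element of $\g$.

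Next, I would invoke the standard dimension count. Viewing $\g$ as an $\tri$-module via $\{a,h_a,f_a\}$ and decomposing into irreducibles, each summand's kernel of $\ad a$ lies in its highest weight space; summing the contributions gives $\dim\g^a\cap\g(0)=\dim\g(0)-\dim\g(2)$. Consequently, the orbit of $a$ under the connected centraliser $Z:=Z_G(h_a)^{\circ}$ satisfies
$$ \dim Z{\cdot}a \;=\; \dim\g(0) - \dim\g^a\cap\g(0) \;=\; \dim\g(2). $$
Since $Z{\cdot}a$ is an irreducible subvariety of the irreducible $\g(2)$ of the same dimension, it is dense (in fact open) in $\g(2)$.

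The theorem then follows by restricting to $\cP\subset\g(2)$. The intersection $(Z{\cdot}a)\cap\cP$ is open in $\cP$ and contains $a$, hence is nonempty and therefore dense in $\cP$; \emph{a fortiori}, $G{\cdot}a\cap\cP$ is dense in $\cP$. Since $b\in\cP$, we conclude $b\in\ov{G{\cdot}a\cap\cP}\subset\ov{G{\cdot}a}$, yielding both assertions. The only real subtlety is the bookkeeping linking the $\h$-characteristic $h_a$ to the $\g$-grading: one must verify that a Jacobson--Morozov triple constructed inside $\h$ still exhibits $h_a$ as a characteristic of $a$ in $\g$, so that Kostant-type density for $\g(2,h_a)$ is available. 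Once this is set up, the argument reduces to intersecting a dense open subset of $\g(2)$ with the plane $\cP$.
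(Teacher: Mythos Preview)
Your proof is correct and follows essentially the same route as the paper: form an $\tri$-triple $\{a,h_a,f_a\}$ inside $\h$, use Proposition~\ref{thm-6.1}(5) to place $\cP$ in $\g(2)$, and then invoke the standard fact that the orbit of $a$ under $G(0)=Z_G(h_a)^\circ$ is open and dense in $\g(2)$ (the paper states $[\g(0),a]=\g(2)$ directly, while you derive the dimension equality from $\tri$-representation theory). The ``subtlety'' you flag is harmless: an $\tri$-triple in $\h$ is automatically one in $\g$, so $h_a$ is a characteristic of $a$ in $\g$ with no further work.
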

\begin{proof}
Let $\{a,h_a,a'\}\subset \h$ be an $\tri$-triple for $a\in\N(\h)\subset\N(\g)$. Consider the $\BZ$-grading
$\g=\bigoplus_{i\in \BZ} \g(i)$ determined by the characteristic $h_a$, i.e.,
$\g(i)=\{x\in \g\mid [h_a,x]=ix\}$. Then $\g(i)=\h(i)\oplus\me(i)$ for all $i\in \BZ$ and $e\in\h(2)$. By
Prop.~\ref{thm-6.1}(5), we have $b\in\me(2)$ and hence $\cP\subset\g(2)$. Let $G(0)$ be the connected (reductive) subgroup of
$G$ with $\Lie G(0)=\g(0)$. By a standard property of $\tri$-triples, $[\g(0),a]=\g(2)$ and
$G(0){\cdot}a$ is dense in $\g(2)$.
This clearly implies that $b\in\ov{G(0){\cdot}a}$ and $G(0){\cdot}a\cap\cP$ is dense in $\cP$.
\end{proof}

\begin{cl}      \label{cl:inclusion-P}
If $(\eus P_2)$ holds for $(H,\co)$, then $\ov{G{\cdot}a}=\ov{G{\cdot}\bvp(\omco)}\supsetneqq
\bco$ and $\hot(G{\cdot}a)\ge \hot (\co)$.
\end{cl}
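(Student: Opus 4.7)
My plan is to split the statement into three pieces -- the inclusion $\bco \subset \ov{G{\cdot}a}$, its strictness, and the height inequality -- and handle them in turn. All three should follow from Theorem~\ref{thm:P-dense} together with elementary bookkeeping on the decomposition $\g = \h \oplus \me$.

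\emph{Inclusion.} By Theorem~\ref{thm:P-dense}, the intersection $G{\cdot}a \cap \cP$ is dense in the commutative plane $\cP = \langle a, b\rangle$, so $\cP \subset \ov{G{\cdot}a}$. Since $e = a + b \in \cP$, this gives $e \in \ov{G{\cdot}a}$ and therefore $\bco = \ov{G{\cdot}e} \subset \ov{G{\cdot}a}$. The identity $G{\cdot}a = G{\cdot}\bvp(\omco)$ is automatic from $a = \bvp(e)$.

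\emph{Strictness.} The task is to prove $\dim G{\cdot}a > \dim H{\cdot}a = \dim\co$, where the last equality is Prop.~\ref{thm-6.1}(1). Since $a \in \h$, the operator $\ad a$ preserves both summands of $\g = \h \oplus \me$, whence $\g^a = \h^a \oplus \me^a$ and the rank--nullity theorem gives
\[
\dim G{\cdot}a - \dim H{\cdot}a \;=\; \dim \me - \dim \me^a \;=\; \dim [a,\me].
\]
So it suffices to show $[a,\me] \ne 0$. Let $\mathfrak I \subset \g$ denote the Lie subalgebra generated by $\me$. A short Jacobi-based induction using $[\h,\me] \subset \me$ yields $[\h,\mathfrak I] \subset \mathfrak I$, and since $\me \subset \mathfrak I$, we get $[\g,\mathfrak I] \subset \mathfrak I$. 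Thus $\mathfrak I$ is a nonzero ideal of $\g$; simplicity of $\g$ forces $\mathfrak I = \g$, and hence $\z_\g(\me) = \z_\g(\mathfrak I) = \z(\g) = 0$. Since $a \ne 0$, this rules out $a \in \z_\g(\me)$, proving $[a,\me] \ne 0$ and giving the strict inclusion $\bco \subsetneq \ov{G{\cdot}a}$.

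\emph{Height.} Set $k = \hot(G{\cdot}a)$. The $G$-invariant locus $\{z \in \g : (\ad z)^{k+1} = 0\}$ is Zariski closed and contains $G{\cdot}a$, hence also its closure $\ov{G{\cdot}a} \supset \co$. Therefore $(\ad e)^{k+1} = 0$ for every $e \in \co$, which gives $\hot(\co) \le k = \hot(G{\cdot}a)$.

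The main (though still mild) point is the strictness: it hinges on the observation that $\me$ generates $\g$ as a Lie algebra, which is precisely where the simplicity of $\g$ is used essentially. The inclusion is a direct consequence of Theorem~\ref{thm:P-dense}, and the height inequality follows at once from the definition of $\hot$.
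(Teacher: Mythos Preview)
Your proof is correct and follows the same skeleton as the paper's: inclusion via Theorem~\ref{thm:P-dense}, strictness via a dimension count, height via the closedness of the locus $\{(\ad z)^{k+1}=0\}$. The paper is much terser --- it simply asserts $\dim G{\cdot}a > \dim H{\cdot}a = \dim\co$ without argument and invokes the ``general fact'' about heights and closures. Your contribution is to supply the missing justification for strictness: the decomposition $\g^a=\h^a\oplus\me^a$ reduces it to $[a,\me]\ne 0$, and you obtain this from $\z_\g(\me)=0$, which in turn follows because the subalgebra generated by $\me$ is a nonzero ideal of the simple $\g$. This is the natural way to fill the gap and implicitly uses $H\ne G$ (so $\me\ne 0$), an assumption the paper also takes for granted throughout.
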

\begin{proof}
Since $e\in \cP$, we have $\ov{G{\cdot}a}\supset \bco$ and $\dim G{\cdot}a>\dim H{\cdot}a=\dim\co$.
The second inequality stems from the general fact that if $\ov{\co_1}\supset \co_2$, then
$\hot(\co_1)\ge \hot(\co_2)$.
\end{proof}

It follows that $\cP\cap\bco$ is a proper conical subvariety of $\cP$, hence it is a union of lines. We also 
have $\lg e\rg\subset\cP\cap\bco$. For $\co=\omin$, one can obtain more properties of $\cP$. 
Recall that $\hot(\omin)=2$ and $\hot(z)\ge 2$ for any $z\in\N\setminus\{0\}$.

\begin{prop}       \label{prop:cP}
Let $\cP\subset\g$ be an arbitrary plane. 
\begin{itemize}
\item[\sf (i)] If \ $\cP\cap\bomin$ contains three lines, then $\cP\subset\bomin$;
\item[\sf (ii)] If \ $\cP$ is commutative and $\cP\cap\bomin$ contains two lines, then 
$\cP\subset \N$ and $\hot(x)=2$ for any $x\in \cP\setminus\{0\}$.
\end{itemize}
\end{prop}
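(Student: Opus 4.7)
\emph{Plan for (i).} The key input is the classical fact (essentially due to Kostant--Lichtenstein, and consistent with the structure of $\bbk[\bomin]$ recalled in Section~\ref{subs:HV}) that the ideal of $\bomin$ in $\bbk[\g]$ is generated in degree $2$: explicitly, the equations come from the $G$-equivariant projection $S^2\g\to S^2\g/\VV_{2\theta}$, where $\VV_{2\theta}$ is the simple $G$-submodule spanned by the square of a highest root vector. Restricted to any $2$-plane $\cP\subset\g$, each such quadric becomes a binary homogeneous polynomial of degree $2$ in the coordinates of $\cP$; its zero set is therefore either all of $\cP$, or the union of at most two lines through the origin. If $\cP\cap\bomin$ contains three distinct lines, the latter option is excluded for every defining quadric, so all of them vanish on $\cP$ and thus $\cP\subset\bomin$.

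\emph{Plan for (ii).} Choose nonzero representatives $a\in L_1\cap\omin$ and $b\in L_2\cap\omin$ of the two lines in $\cP\cap\bomin$. Commutativity of $\cP$ gives $[a,b]=0$, so $\ad a$ and $\ad b$ commute as endomorphisms of $\g$. For an arbitrary $c=\alpha a+\beta b\in\cP$, the operator $\ad c=\alpha\,\ad a+\beta\,\ad b$ is then a linear combination of two commuting nilpotent endomorphisms, hence nilpotent. Jordan decomposition in the semisimple algebra $\g$ forces the semisimple part of $c$ to be central, thus zero, so $c\in\N$. This proves $\cP\subset\N$.

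For the height bound, I would use the elementary consequence of the $\tri$-triple grading at a minimal nilpotent element: if $\{e,h,f\}$ is a minimal $\tri$-triple, then $\g=\g(-2)\oplus\g(-1)\oplus\g(0)\oplus\g(1)\oplus\g(2)$ with $\g(\pm 2)$ one-dimensional, whence $(\ad e)^2(\g)\subset\bbk e$. Applied to both $a$ and $b$, together with $[a,b]=0$, this yields
\[
(\ad a)^2(\ad b)=(\ad b)(\ad a)^2\subset (\ad b)(\bbk a)=\bbk[b,a]=0,
\]
and symmetrically $(\ad a)(\ad b)^2=0$. Since $(\ad a)^3=(\ad b)^3=0$ as well, the binomial expansion
\[
(\alpha\,\ad a+\beta\,\ad b)^3=\alpha^3(\ad a)^3+3\alpha^2\beta(\ad a)^2(\ad b)+3\alpha\beta^2(\ad a)(\ad b)^2+\beta^3(\ad b)^3
\]
vanishes term by term. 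Hence $\hot(c)\le 2$, and since $\hot(z)\ge 2$ for every $z\in\N\setminus\{0\}$, we conclude $\hot(c)=2$ for all $c\in\cP\setminus\{0\}$.

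\emph{Main obstacle.} The only nontrivial input is the quadratic nature of the defining ideal of $\bomin$ needed in (i); once that is granted, part (ii) is a clean computation based on the rigid ``height~$2$'' identity $(\ad e)^2(\g)\subset\bbk e$ at minimal elements and the commutativity hypothesis.
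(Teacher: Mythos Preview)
Your proof is correct and matches the paper's approach: part {\sf (i)} via quadratic generation of the ideal of $\bomin$, and part {\sf (ii)} via the binomial expansion of $(\ad c)^3$ using $(\ad y)^3=0$ and $(\ad y)^2(\g)\subset\bbk y$ for $y\in\omin$ together with $[a,b]=0$. The separate Jordan-decomposition argument for $c\in\N$ is correct but redundant, since $(\ad c)^3=0$ already forces nilpotency.
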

\begin{proof}
{\sf (i)} This is a special case of the general assertion on the minimal $G$-orbit in an arbitrary simple 
$G$-module, see~\cite[Lemma\,3.1]{omin}. The key fact is that the ideal of $\ov{\omin(\VV_\lb)}$ in 
$\bbk[\VV_\lb]$ is generated by quadrics. For, if a quadric vanishes on three lines in the plane $\cP$, 
then it is identically zero on $\cP$.

{\sf (ii)}  Let $y_1, y_2$ be two linearly independent vectors in $\cP\cap\bomin$. It is a basis for 
$\cP$ and $[y_1,y_2]=0$. Take any $x=c_1y_1+c_2y_2\in \cP$. Then
\[
   (\ad x)^3=c_1^3(\ad y_1)^3+ 3c_1^2 c_2(\ad y_1)^2(\ad y_2)+3c_1c_2^2(\ad y_1)(\ad y_2)^2+
   c_2^3(\ad y_2)^3 .
\]
Since $y_i\in\omin$, we have $(\ad y_i)^3=0$ and $\Ima (\ad y_i)^2\subset \langle y_i\rangle$ for 
$i=1,2$. Because $[y_1,y_2]=0$, this shows that all summands for $(\ad x)^3$ vanish. Hence $x\in\N$ 
and $\hot(x)=2$.
\end{proof}

\begin{rmk}
Suppose that $\h=\g_0$ is a symmetric subalgebra and $H=G_0$ has a dense orbit in $\omin$.
For $e=a+b\in\omen$ and the commutative plane $\cP=\lg a,b\rg$, there are two lines in
$\cP\cap\bomin$, i.e.,  $\lg e\rg$ and $\lg \sigma(e)\rg$. (Here $\sigma(e)=a-b$.)
Therefore, $\hot (x)=2$ for all $x\in\cP$.
This provides a generalisation (and a shorter proof) of Theorem~6.2(i) in \cite{omin}.

If $\h$ is not symmetric, then it can happen that $\hot(x)\ge 3$ for generic $x\in\cP$, see items 6--10 in
Table~\ref{table:odin} below. In these cases,  $\lg e\rg$ is the only line in 
$\cP\cap\bomin$. However, if $\co\ne\omin$, then the ideal of $\bco$ is not generated by quadrics and 
$\dim \Ima (\ad x)^2>1$ for $x\in \co$. 
Therefore, there is no analogue of Proposition~\ref{prop:cP} for arbitrary $\co$.
\end{rmk}

\section{The pairs $(H, \co)$ with property $(\eus P_1)$} 
\label{sect:table}

\noindent
An orbit $\co\in\N/G$ is said to be {\it good}, if there is a semisimple subgroup $H\subset G$ such that 
$\bco\cap\me=\{0\}$, i.e., $(\eus P_1)$ holds. Then the pair $(H,\co)$ is said to be {\it good}, too. If 
$(H,\co)$ is good and $\co'\subset\bco$, then $(H,\co')$ is good as well. Hence the first step towards a 
classification of all good pairs is to determine the subgroups $H\subset G$ such that $(H,\omin)$ 
is good. The corresponding subgroups $H$ are also said to be {\it good}.

In Table~\ref{table:odin}, we gather certain pairs $(G, H)$ such that $G$ is simple and $H\subset G$ is 
good. An accurate verification that $(H,\omin)$ is good for all items is performed in 
Section~\ref{subs:check}.
We also explain how to determine $\bvp(\omen)$ and $G{\cdot}\bvp(\omen)$. For all pairs $(G,H)$ in the 
table, we determine all non-minimal $\co\in\N/G$ such that $(H,\co)$ is still good~(Section~\ref{subs:non-min}). Then it will be shown in Section~\ref{sect:classif} that Table~\ref{table:odin}
gives the {\bf complete} list of good subgroups $H$. Therefore, results of this section provide
the classification of all good pairs $(H,\co)$ for simple groups $G$.

{\it\bfseries Explanations to Table~\ref{table:odin}.}
Let $\omen$ denote the dense $H$-orbit in $\omin$. Then $\bvp(\omen)\in \N(\h)/H$, 
$\dim\bvp(\omen)=\dim\omin$, and $\tilde\co:=G{\cdot}\bvp(\omen)$ is the $G$-orbit generated by 
$\bvp(\omen)$. Nilpotent orbits of the classical groups are represented by partitions~\cite[Chap.\,5]{CM} 
and, for orbits of the exceptional groups, the standard notation is used, see 
tables in~\cite[Chap.\,8]{CM}. The last column provides the degree of the finite projection 
$\bvp:\bomin\to \bvp(\bomin)\subset\h$. The partitions of minimal orbits are
$(2,1^{n-2})$ for $\sln$, $(2^2, 1^{n-4})$ for $\son$, $(2,1^{2n-2})$ for $\spn$.

\begin{table}[ht]
\caption{The pairs $(G,H)$ having properties $(\eus P_1)$ and $(\eus P_2)$ for $\omin$}
\label{table:odin}
\begin{tabular}{c|c| cc| c|c c|c|}
\rus{N0}  & $G,H$ & $\dim\me$ & $\dim\omin$ & $\bvp(\omen)$ & $\tilde\co$ & $\dim\tilde\co$ 
& $\deg\bvp$ \\ 
 \hline\hline
1& $\GR{F}{4},\GR{B}{4}$ & 16 & 16 & $(2^4,1)$ & \rule{0pt}{2.4ex} $\tilde{\mathsf A}_{1}$ & 22 & 2 \\ 
2& $\GR{E}{6},\GR{F}{4}$ & 26 & 22 & \rule{0pt}{2.6ex} $\tilde{\mathsf A}_{1}$ & $2\mathsf{A}_{1}$ & 32 
& 2 \\ 
3& $\GR{B}{n},\GR{D}{n}$ & $2n$ & $4n{-}4$ & $(3,1^{2n-3})$ & $(3,1^{2n-2})$ & $4n{-}2$ & 2 \\ 
4& $\GR{D}{n+1},\GR{B}{n}$ & $2n{+}1$ & $4n{-}2$ & $(3,1^{2n-2})$ & $(3,1^{2n-1})$ & $4n$ & 2 \\ 
5& $\GR{A}{2n{-}1},\GR{C}{n}$ & $\genfrac{(}{)}{0pt}{}{2n}{2}{-}1$ & $4n{-}2$ & $(2^2,1^{2n{-}4})$ & 
$(2^2,1^{2n{-}4})$ & $8n{-}8$ & 2 \\ \hline
6& $\GR{B}{3},\GR{G}{2}$ & 7 & 8 & \rule{0pt}{2.5ex} $\tilde{\mathsf A}_{1}$ & $(3,2,2)$ & 12 & 1 \\
7& $\GR{B}{4},\GR{B}{3}$ & 15 & 12 &  $(3,2,2)$ & $(3,2^2,1^2)$ & 20 & 2 \\ 
8& $\GR{F}{4},\GR{D}{4}$ & 24 & 16 & $(3,2,2,1)$ & \rule{0pt}{2.4ex} $\tilde{\mathsf A}_{1}{+}\mathsf{A}_{1}$ & 28 & 4 \\ 
9& $\GR{G}{2},\GR{A}{2}$ & 6 & 6 & $\co_{reg}{\sim}(3)$ & ${\mathsf{G}}_{2}(a_1)$ & 10 & 3 \\
10& $\GR{D}{4},\GR{G}{2}$ & 14 & 10 & ${\mathsf{G}}_{2}(a_1)$ & $(3,3,1,1)$ & 18 & 6 \\
11& $\GR{C}{n},\displaystyle \times_{i=1}^k\GR{C}{n_i}$ & $d_k$ & $2n$ & $\displaystyle \prod_{i=1}^k \omin(\GR{C}{n_i})$
& $(2^k,1^{2n{-}2k})$ & $k(2n{-}k{+}1)$ & $2^{k-1}$ \\
\hline
\end{tabular}
\end{table}

For items~1--5, $(\g,\h)$ is a symmetric pair and the embeddings $H\subset G$ are well known. Here
$\me$ is a simple $\h$-module. Although the orbits $\bvp(\omen)$ and $\tilde\co$ in item~5 are given by 
the same partitions, they belong to different Lie algebras.

Items~6--10 are non-symmetric cases. We give details on the embeddings $H\subset G$ and the 
structure of the $H$-module $\me$ in Section~\ref{subs:emb}. Here $\me$ is a simple $\h$-module only 
for {\rus{N0}}6.

For item~11, we have $k\ge 2$, $n=\sum_{i=1}^k n_i$, and $d_k=2(n^2-\sum_{i=1}^k n_i^2)$. Here $\h$ 
is symmetric if and only if $k=2$, and then $\me$ is a simple $\h$-module. 

All these items provide pairs of {\it shared orbits\/} $(\omin, \bvp(\omen))$ in the sense R.K.\,Brylinski 
and B.\,Kostant. However, our item~7 is missing in their classification of shared orbits, 
see~\cite[Corollary\,5.4]{bk94} (resp.~\cite[Theorem\,5.9]{bk94}) for the case in which $\rk\,\h<\rk\,\g$ 
(resp. $\rk\,\h=\rk\,\g$). For this reason, we give in Section~\ref{sect:classif} a detailed classification of the 
good pairs $(H,\omin)$ with $G$ simple. It should also be noted that the main emphasis in~\cite{bk94}
is given to the case in which $\h$ is simple. Therefore, our item~11 only appears in Remark\,4.9 therein.

\subsection{Embeddings}    \label{subs:emb}
The  symmetric case has already been studied in~\cite[Sect.\,5]{omin}, and it is 
proved therein that  $(\eus P_1)$ and $(\eus P_2)$ hold for items 1--5 and 11 (with $k=2$).

To handle the remaining cases in Table~\ref{table:odin}, we need explicit embeddings 
$H\subset G$ and thereby some notation on representations. The fundamental weights of $G$ are 
denoted by $\{\tvp_i\}$, and if $H$ is simple, then the fundamental weights of $H$ are 
$\{\vp_i\}$. The numbering of the simple roots and fundamental weights follows~\cite{t41}.
In particular, $\tvp_1$ is always the highest weight of a simplest representation of $G$.
A  simple $H$-module $\sfr_\lb$ is usually identified with its highest weight $\lb$ via the {\sl multiplicative\/} 
notation for $\lb$ in terms 
of the fundamental weights. For instance, $\vp_j\vp_k+3\vp_i^2$ is a shorthand for 
$\sfr_{\vp_j+\vp_k}+3\sfr_{2\vp_i}$ (the sum of four simple $H$-modules). Write $\odin$ for 
the trivial one-dimensional representation.

Below, we point out the restriction of the simplest representation of $G$ to $H$, $\me$ as $H$-module,
 and the highest root of the simple Lie algebra $\h$.

\begin{tabular}{rllll}
6) &  $(\GR{B}{3},\GR{G}{2})$: \ $\dim(\tvp_1)=7$, &  $\tvp_1\vert_H=\vp_1$, &  $\me=\vp_1$, & $\h=\vp_2$ ;\\
7) &  $(\GR{B}{4},\GR{B}{3})$: \ $\dim(\tvp_1)=9$, & $\tvp_1\vert_H=\vp_3{+}\odin$,   &  $\me=\vp_1{+}\vp_3$, & $\h=\vp_2$  ;\\
8) &  $(\GR{F}{4},\GR{D}{4})$: \ $\dim(\tvp_1)=26$, & $\tvp_1\vert_H=\vp_1{+}\vp_3{+}\vp_4{+}2\odin$,  &
$\me=\vp_1{+}\vp_3{+}\vp_4$, & $\h=\vp_2$ ; \\
9) &  $(\GR{G}{2},\GR{A}{2})$: \ $\dim(\tvp_1)=7$, & $\tvp_1\vert_H=\vp_1{+}\vp_2+\odin$, & $\me=\vp_1{+}\vp_2$,
& $\h=\vp_1\vp_2$  ;\\
10) &  $(\GR{D}{4},\GR{G}{2})$: \ $\dim(\tvp_1)=8$, & $\tvp_1\vert_H=\vp_1{+}\odin$, & $\me=2\vp_1$,
& $\h=\vp_2$  .\\
\end{tabular}

\vspace{.6ex}
\noindent For item~11, $H$ is not simple and $\vp^{(i)}_1$ stands for the first fundamental weights of the $i$-th factor of $H$, $\GR{C}{n_i}$. Then 

\quad 11) \quad $(\GR{C}{n},\bigoplus_{i=1}^k \GR{C}{n_i})$: \ $\dim(\tvp_1)=2n$, \ $\tvp_1\vert_H=\bigoplus_{i=1}^k \vp^{(i)}_1$
\ and \ $\me=\bigoplus_{1\le i<j\le k}\vp^{(i)}_1\vp^{(j)}_1$.

\subsection{Verification}    \label{subs:check}
Below, we demonstrate several techniques to check $(\eus P_1)$ or $(\eus P_2)$ for $(H,\omin)$ in the 
non-symmetric cases. The {\it null-cone\/} in the $H$-module $\sfr$ is denoted by $\N_H(\sfr)$ or 
$\N_H(\lb)$, if $\sfr=\sfr_\lb$. Recall that if $\g$ is classical, then $\co(\bmu)$ is the nilpotent orbit in $\g$ 
corresponding to the partition $\bmu$.

\noindent
\textbullet \quad For item~6 in Table~\ref{table:odin}, we actually describe all good nilpotent orbits.
\begin{lm}       \label{lm:B3-G2}
For  $(G,H)=(\GR{B}{3},\GR{G}{2})$, the only good nilpotent $SO_7$-orbits are $\co'=\co(3,1^4)$ and 
$\omin=\co(2^2,1^3)$. 
\end{lm}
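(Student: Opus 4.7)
My plan is to combine the dimensional bound from Corollary~\ref{cor:for-P2} with one Jordan-type computation inside the standard $\GR{G}{2}$-module, in order to locate $\me\cap\N$ within the $SO_7$-orbit stratification. Corollary~\ref{cor:for-P2} applied to $H=\GR{G}{2}$ forces $\dim\co\le\dim\g_2-\rk\,\g_2=12$ for any good $\co$. Among the $SO_7$-nilpotent orbits, only $\{0\}$, $\omin=\co(2^2,1^3)$ (dimension $8$), $\co(3,1^4)$ (dimension $10$), and $\co(3,2^2)$ (dimension $12$) meet this bound. Since item~6 of Table~\ref{table:odin} establishes $(\eus P_1)$ for $\omin$, and $(\eus P_1)$ is inherited by orbits lying in the closure, the question reduces to whether $\co(3,1^4)$ and $\co(3,2^2)$ are good.

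I would identify $\me$ with the $7$-dimensional $\GR{G}{2}$-module $V_7$, with $Q$ denoting the restriction of the Killing form of $\mathfrak{so}_7$ to $\me$. Because the only (up to scalar) $\GR{G}{2}$-invariant quadratic form on $V_7$ is $Q$, and every nilpotent element of $\mathfrak{so}_7$ is self-orthogonal for the Killing form, we get $\me\cap\N=\{v\in\me:Q(v)=0\}$, which consists of $\{0\}$ together with the unique $6$-dimensional $\GR{G}{2}$-orbit $O_{\mathsf{null}}$ of nonzero null vectors.

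The key step is to determine the $SO_7$-orbit containing $O_{\mathsf{null}}$. The embedding $\me\hookrightarrow\mathfrak{so}_7$ sends $v\in V_7$ to the operator $v\times(\cdot)$; the identity $(v\times)^2 u=\langle v,u\rangle v-Q(v)u$ reduces, for $Q(v)=0$, to $(v\times)^2u=\langle v,u\rangle v$, so $(v\times)^3=0$ and $(v\times)^2$ has rank~$1$. I would then compute directly in Zorn's vector-matrix model of the complex octonions: taking $v$ to be a nonzero null element supported off-diagonally in $\BC^3$, a short calculation with the cross product on $\BC^3$ yields $\dim\ker(v\times)=3$. The only $\mathfrak{so}_7$-admissible partition of $7$ with exactly one Jordan block of size~$3$ and three Jordan blocks in total is $(3,2,2)$ (the partition $(3,2,1^2)$ is excluded because $2$ must have even multiplicity), so $O_{\mathsf{null}}\subset\co(3,2^2)$.

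Consequently $\overline{\co(3,2^2)}\cap\me\supsetneq\{0\}$, so $\co(3,2^2)$ is not good. On the other hand $\overline{\co(3,1^4)}=\co(3,1^4)\cup\omin\cup\{0\}$, and $\omin\cap\me=\varnothing$ by item~6 of Table~\ref{table:odin}, while any element of $\co(3,1^4)\cap\me$ would belong to $O_{\mathsf{null}}\subset\co(3,2^2)$, which is impossible; hence $\overline{\co(3,1^4)}\cap\me=\{0\}$ and $\co(3,1^4)$ is good. The main obstacle is the octonionic Jordan-type calculation; as an alternative, one could apply Theorem~\ref{thm:P-dense} to push $\bpsi(e)$ for $e\in\omen$ into $\overline{\co(3,2^2)}\cap\me$ and then identify its $SO_7$-class by computing its centralizer in $\g_2$ and matching to the only admissible $\mathfrak{so}_7$-orbit of the correct stabilizer dimension.
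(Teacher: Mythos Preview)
Your argument is correct and reaches the same conclusion, but the route differs genuinely from the paper's. The paper does not compute Jordan types via octonion multiplication. Instead, after identifying $\N\cap\me$ with the null-cone $\N_{\GR{G}{2}}(\vp_1)$ (invoking sphericity of $SO_7/\GR{G}{2}$) and observing that its unique nonzero $\GR{G}{2}$-orbit $\mathbb O$ has dimension~$6$, it pins down $SO_7{\cdot}\mathbb O$ by pure dimension counting: the general inequality $\dim G{\cdot}x\ge 2\dim H{\cdot}x$ for $H$-orbits in an isotropy representation gives $\dim SO_7{\cdot}\mathbb O\ge 12$, which already shows that $\omin$ and $\co(3,1^4)$ are good; then the moment-map formula $\dim\ov{G{\cdot}\me}=2\dim\me-\dim\me\md H=13$ forces $\dim SO_7{\cdot}\mathbb O\le 12$, whence $SO_7{\cdot}\mathbb O=\co(3,2^2)$, the unique $12$-dimensional orbit. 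Your approach trades these invariant-theoretic inputs (and the initial upper bound from Corollary~\ref{cor:for-P2}) for an explicit determination of $\dim\ker(v\times)$ in Zorn's model; this is more elementary and self-contained, but specific to this one pair, whereas the paper's dimension template is reused in later verifications.

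Two minor points. First, citing item~6 of Table~\ref{table:odin} for the goodness of $\omin$ is circular within the paper's logic, since this lemma \emph{is} the verification of item~6; however, your own computation already shows $(\me\cap\N)\setminus\{0\}\subset\co(3,2^2)$, which immediately gives $\bomin\cap\me=\{0\}$, so the citation is superfluous and can simply be deleted. Second, the equality $\me\cap\N=\{Q=0\}$ is asserted before the inclusion $\{Q=0\}\subset\N$ is justified; that inclusion only follows once you have established $(v\times)^3=0$ for null $v$, so it would read more cleanly to claim only $\me\cap\N\subset\{Q=0\}$ at first and upgrade to equality after the cube vanishes.
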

\begin{proof}
Here  $\me=\vp_1$ as $\GR{G}{2}$-module and $\N_H(\me)=\N_{\GR{G}{2}}(\vp_1)$ is 
a 6-dimensional quadric in $\vp_1$. Since $SO_7/\GR{G}{2}$ is a spherical homogeneous space, we 
have $\N\cap\me=\N_H(\me)$~\cite[Thm.\,2.4]{p24}. It is known that 
$\N_{\GR{G}{2}}(\vp_1)\setminus\{0\}=:\mathbb O$ is one $\GR{G}{2}$-orbit. Therefore, 
$SO_7{\cdot}\mathbb O\subset \mathfrak{so}_7$ is the only nilpotent orbit  meeting $\me$. By a general 
property of $H$-orbits in isotropy representations, one has 
$\dim SO_7{\cdot}\mathbb O\ge 2\dim\mathbb O=12$~\cite[Lemma\,2.1]{p24}. Since $\dim \co'=10$, 
$\dim\omin=8$, and $\ov{\co'}=\co'\cup\omin\cup\{0\}$, these two $SO_7$-orbits are good.

The variety $G{\cdot}\me\subset \g\simeq\g^*$ is the image of the moment map for the cotangent bundle 
of $G/H$. A general formula for $\dim(\ov{G{\cdot}\me})$ is found in~\cite[Satz\,7.1]{kn90}. If $H$ is reductive, then it can be 
written as $\dim(\ov{G{\cdot}\me})=2\dim\me-\dim\me\md H$.  In our case, we obtain 
$\dim(SO_7{\cdot}\me)=13$. Therefore, $\dim SO_7{\cdot}\mathbb O=12$ and hence 
$SO_7{\cdot}\mathbb O=\co(3,2,2)$, the only nilpotent orbit of dimension~12. Since $\co(3,2,2)$ covers 
$\co'$ in the poset $\N/SO_7$, all other nilpotent orbits in $\sosm$ are also not good.
\end{proof}

\noindent
\textbullet \quad A rational $H$-algebra $\ca$ over $\bbk$ is called a {\it model algebra}, if every 
irreducible representation of $H$ occurs in $\ca$ exactly once. If $\ca$ is finitely generated, then 
$\dim(\spe\ca)=\dim B_H$ and $B_H$ has finitely many orbits in $\spe\ca$. A standard example of model 
algebras is $\ca=\bbk[H/U_H]$.  An interesting approach to model algebras is developed by 
D.\,Luna~\cite{L07}. 

\begin{prop}   \label{prop:model-alg}
For items~6--8, $\bbk[\bomin]$ is a model algebra for $H$. Therefore, $\omin$ is a spherical $H$-variety, $B_H$ has a dense orbit in $\omin$ and thereby $(\eus P_1)$ also holds. 
\end{prop}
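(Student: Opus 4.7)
The strategy is to exploit the explicit $\BN$-graded decomposition recalled in Section~\ref{subs:HV}. Since $\omin=\omin(\g)$ is the orbit of a highest-weight vector in the adjoint representation, whose highest weight is the highest root $\theta$ of $\g$, one has
$$
\bbk[\bomin]=\bigoplus_{n\ge 0}\VV_{n\theta}^{*}
$$
as graded $G$-modules. Hence, to prove that $\bbk[\bomin]$ is an $H$-model algebra, it suffices to branch each $\VV_{n\theta}^{*}$ to $H$ and verify that, summing over $n\in\BN$, every irreducible $H$-module appears with multiplicity exactly one.

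The branching will be carried out case by case, using the embeddings $H\subset G$ and the $H$-module structures of $\me$ listed in Section~\ref{subs:emb}. For item~6 one has $\theta=\tvp_2$ and $\VV_\theta\vert_{\GR{G}{2}}=\vp_1+\vp_2$ (the $21$-dimensional adjoint of $\GR{B}{3}$ splits as $7+14$); for items~7 and~8 the adjoint modules of $\GR{B}{4}$ and $\GR{F}{4}$, of dimensions $36$ and $52$, are restricted via the spin embedding and the standard $\GR{D}{4}\subset\GR{F}{4}$, respectively. Using these restrictions and the Weyl character formula (or equivalent tools) one expands $\VV_{n\theta}\vert_H$ and checks the two conditions needed: each summand $\VV_{n\theta}\vert_H$ is itself multiplicity-free, and no $H$-irreducible occurs in two distinct summands. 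The main obstacle is precisely this bookkeeping. A convenient sanity check is the dimensional match $\dim\bomin=\dim B_H$ (equal to $8,12,16$ for items~6,~7,~8 respectively), which is the correct dimension for the spectrum of a finitely generated $H$-model algebra.

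Once the model algebra property is established, the remaining assertions are formal consequences of the general facts recalled just before the proposition. A finitely generated model algebra $\ca$ for $H$ satisfies $\dim(\spe\ca)=\dim B_H$ and $B_H$ has finitely many orbits on $\spe\ca$; in particular, $\bomin$ is a spherical $H$-variety and $B_H$ has a dense orbit on it. Since $\bomin=\omin\cup\{0\}$ and $\{0\}$ is a $B_H$-fixed point of lower dimension, the dense $B_H$-orbit lies in $\omin$. A fortiori $H$ has a dense orbit in $\omin$, which is property $(\eus P_2)$. Finally, Theorem~\ref{thm:main2} gives the equivalence $(\eus P_2)\Leftrightarrow(\eus P_1)$ for $\omin$, so $(\eus P_1)$ also holds.
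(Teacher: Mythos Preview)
Your strategy---branch each $\VV_{n\theta}$ to $H$ and verify that the totality is multiplicity-free and exhaustive---is in principle sound, but you have not actually carried it out, and that is precisely the hard part. Saying ``one expands $\VV_{n\theta}\vert_H$ and checks'' is not a proof: you must handle infinitely many $n$, and you give no uniform mechanism for doing so. The dimensional match $\dim\omin=\dim B_H$ is a necessary sanity check but does not establish multiplicity-freeness. As written, the argument is a plan rather than a proof.

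The paper avoids the infinite branching entirely by a different idea. The key observation is that in items~6--8 the $H$-module $\g=\h\oplus\me$ is exactly the direct sum of \emph{all} fundamental representations of $H$ (one checks this from the data in Section~\ref{subs:emb}). Since $\bbk[\bomin]$ is a domain generated in degree~$1$ by $\g$, it follows that \emph{every} irreducible $H$-module occurs in $\bbk[\bomin]$; equivalently, the monoid of $H$-highest weights in $\bbk[\bomin]$ is the full monoid of dominant weights. The paper then invokes a general result (from~\cite{diss}) that the generic stabiliser for the $B_H$-action on an affine $H$-variety is determined by this weight monoid; when the monoid is everything, generic $B_H$-stabilisers are trivial. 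Combined with $\dim\omin=\dim B_H$, this forces a dense $B_H$-orbit. Sphericality then gives multiplicity-freeness, so the model-algebra property follows \emph{a posteriori} rather than being checked directly. No branching beyond degree~$1$ is needed.
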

\begin{proof}
In these three cases, formulae of Section~\ref{subs:emb} show that $\g=\h\oplus\me$ is the sum of 
{\bf all} fundamental representations of $H$. Since the graded affine algebra $\bbk[\bomin]$ has no zero 
divisors and is generated by the elements of grade $1$ (i.e., by $\g$), {\bf all} finite-dimensional 
irreducible representations of $H$ occur in $\bbk[\bomin]$. By~\cite[Chap.\,1]{diss}, generic 
stabilisers for the $B_H$-action on an affine $H$-variety $X$
are fully determined by the monoid of $H$-highest weights in $\bbk[X]$. In our cases with 
$X=\bomin$, this monoid consists of {\bf all} $H$-dominant weights. Therefore, the 
$B_H$-action on $\bomin$ has trivial generic stabilisers. We also have
$\dim\omin=\dim B_H$. Hence $B_H$ has a dense orbit in $\omin$.
\end{proof}

\noindent
\textbullet \quad For {\rus N0} 8,\ 9,\,11, we have $\rk\,\g=\rk\,\h$ and the root system of $\h$ contains 
all long roots of $\g$ (w.r.t. a common Cartan subalgebra). This implies that $\omin\cap\me=\varnothing$, 
see~\cite[Example\,7.2]{omin}.

\noindent
\textbullet \quad If there is an intermediate semisimple subgroup $\tilde H$, then  the chain
$G\supset\tilde H\supset H$ can be used for our purposes.

\begin{lm}           \label{lm:D4-G2}
For $(\GR{D}{4},\GR{G}{2})$, we have \ $\bomin\cap\me=\{0\}$. 
\end{lm}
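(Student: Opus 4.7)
The plan is to exploit the suggested chain of subgroups $\GR{G}{2}\subset\GR{B}{3}\subset\GR{D}{4}$, reducing the claim for the pair $(\GR{D}{4},\GR{G}{2})$ to two facts already established for the intermediate pair. First I would set up notation: let $\me_1$ denote the orthogonal complement of $\mathfrak{so}_7=\GR{B}{3}$ in $\mathfrak{so}_8=\GR{D}{4}$ (the $(-1)$-eigenspace of the standard involution), and let $\me_2$ denote the orthogonal complement of $\eus G_2$ in $\mathfrak{so}_7$. Then $\me=\me_2\oplus\me_1$ is the orthogonal complement of $\eus G_2$ in $\GR{D}{4}$, and the projection $\bvp_1:\GR{D}{4}\to\GR{B}{3}$ is the first of the two projections that compose to give $\bvp$.

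Next I would argue by contradiction: suppose $e\in\bomin\cap\me$ is nonzero, and write $e=e_2+e_1$ with $e_j\in\me_j$. Since $\bvp_1(e)=e_2$, the point $e_2$ lies in $\bvp_1(\bomin(\GR{D}{4}))\cap\me_2$. Here is where item~4 of Table~\ref{table:odin} (with $n=3$) enters: the pair $(\GR{D}{4},\GR{B}{3})$ satisfies $(\eus P_1)$, so $\bvp_1$ is finite on $\bomin(\GR{D}{4})$ and the image is the closure of $\bvp_1(\omen(\GR{D}{4}))=\co(3,1^4)$, that is, $\bvp_1(\bomin(\GR{D}{4}))=\ov{\co(3,1^4)}\subset\mathfrak{so}_7$. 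Therefore $e_2\in\ov{\co(3,1^4)}\cap\me_2$.

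Now I would invoke Lemma~\ref{lm:B3-G2}, which asserts that $\co(3,1^4)$ is a good nilpotent $SO_7$-orbit for the pair $(\GR{B}{3},\GR{G}{2})$, i.e., $\ov{\co(3,1^4)}\cap\me_2=\{0\}$. This forces $e_2=0$, so $e=e_1\in\me_1$. But then $e\in\omin(\GR{D}{4})\cap\me_1$, and applying property $(\eus P_1)$ for item~4 once more yields $e=0$, a contradiction.

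The scheme is entirely bookkeeping once the right intermediate subgroup is chosen; the essential input is that both links $(\GR{D}{4},\GR{B}{3})$ and $(\GR{B}{3},\GR{G}{2})$ have been handled individually, and crucially that the specific orbit $\co(3,1^4)$ produced as the image of $\bvp_1$ happens to be among the two $\eus G_2$-good orbits identified in Lemma~\ref{lm:B3-G2}. The only real delicacy is ensuring that the image of $\bvp_1$ is exactly $\ov{\co(3,1^4)}$ (so that its intersection with $\me_2$ is controlled), which follows from the finiteness of $\bvp_1$ guaranteed by Prop.~\ref{prop-ast}(1). No further case analysis is required.
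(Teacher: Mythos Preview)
Your proposal is correct and follows essentially the same route as the paper: both use the chain $\GR{G}{2}\subset\GR{B}{3}\subset\GR{D}{4}$, identify $\bvp_1(\bomin)=\ov{\co(3,1^4)}$ via item~4 of Table~\ref{table:odin}, and then invoke Lemma~\ref{lm:B3-G2} to get $\ov{\co(3,1^4)}\cap\me_2=\{0\}$. The only cosmetic difference is that the paper phrases the conclusion as ``$\bvp_2$ is finite, hence $\bvp=\bvp_2\circ\bvp_1$ is finite'', whereas you argue directly by contradiction that a nonzero $e\in\bomin\cap\me$ would force $e_2=0$ and then $e\in\bomin\cap\me_1=\{0\}$; these are equivalent.
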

\begin{proof}
Consider the chain $\GR{D}{4}\supset \GR{B}{3}\supset  \GR{G}{2}$, where
the first pair is symmetric and the second pair is item~6 in Table~\ref{table:odin}.  Then
\[
   \mathfrak{so}_8=\mathfrak{so}_7\oplus \me_1= ({\eus G}_2\oplus \me_2)\oplus \me_1 
\]
and $\me=\me_2\oplus \me_1$ is the isotropy representation for $(\GR{D}{4},\GR{G}{2})$. Here $\me_1$ and $\me_2$ are isomorphic 7-dimensional $\GR{G}{2}$-modules.
The projection 
$\bvp:\bomin\to {\eus G}_2$ is the composition of two projections:
\[
   \bomin\stackrel{\bvp_1}{\longrightarrow}\mathfrak{so}_7 \stackrel{\bvp_2}{\longrightarrow} 
   {\eus G}_2 .
\]
The symmetric pair $\GR{D}{4}\supset \GR{B}{3}$ occurs as item~4 in Table~\ref{table:odin}. Therefore,
$\bomin\cap\me_1=\{0\}$, $\bvp_1$ is finite, and $\bvp_1(\bomin)$ is the closure of the nilpotent 
$\GR{B}{3}$-orbit $\co'=\co(3,1^4)$. By Lemma~\ref{lm:B3-G2}, we have $\ov{\co'}\cap\me_2=\{0\}$.
Hence $\bvp_2$ is also finite. Thus, $\bvp$ is finite and $\bomin\cap\me=\{0\}$.
\end{proof}

This completes the verification of goodness for items 6--11 of Table~\ref{table:odin}. Let us explain
how to fill in the other columns of the  table. For items 6--10, the
$H$-orbit $\bvp(\omen)$ is uniquely determined by its dimension, which is equal to $\dim\omin$. To compute the $G$-saturation
$\tilde\co=G{\cdot}\bvp(\omen)$, we consider an $\tri$-triple $\{a,h_a,a'\}\subset \h$ and 
$\ah=\lg a,h_a,a'\rg\simeq\tri$. The {\it\bfseries wDd} of the $H$-orbit $\bvp(\omen)$ allows us to obtain the 
decomposition of $\h$ as $\ah$-module. Using partitions for items 7--9,11 or some other {\sl ad hoc} 
methods for $H=\GR{G}{2}$ (items 6,10), we compute the decomposition of $\me$ as $\ah$-module. 
Then the knowledge of $\g$ as $\ah$-module uniquely determines $\tilde\co$.

\subsection{The non-minimal good orbits} 
\label{subs:non-min}
In order to describe the such orbits, it is natural to begin with the orbits covering $\omin$ in the poset
$\N/G$. For the groups $G$ occurring in Table~\ref{table:odin}, the number of  nilpotent orbits covering 
$\omin$ equals 3 for $\GR{D}{4}$, 2 for $\GR{D}{n}$ ($n\ge 5$) or $\GR{B}{n}$ 
($n\ge 3$), and 1 for the remaining cases. The $\GR{D}{4}$-case is treated separately, because it has 
some other peculiarities. Recall that if $G=SO_{4m}$ and $\bmu$ is very even, then
$\co(\bmu)_{\sf I}$ and $\co(\bmu)_{\sf II}$ are two very even $SO_{4m}$-orbits corresponding to $\bmu$.

For $G=SO_{2n}$, we have $\dim(\tvp_1)=2n$ (the simplest representation) and 
$\dim(\tvp_{n-1})=\dim(\tvp_n)=2^{n-1}$ (the half-spinor representations). For $n=4$, all of them are
8-dimensional, and each of them can be chosen as a simplest representation. The nilpotent orbits 
covering $\omin$ in $\N/SO_8$ are $\co(3,1^5)$, $\co(2^4)_{\sf I}$, and $\co(2^4)_{\sf II}$. 

\begin{lm}   \label{lm:trial}
Let us fix a simplest representation $\tvp_1$ for\/ $\g=\sovm$.

{\sf (i)} If the embedding $H=\GR{B}{3}\subset\GR{D}{4}$ is standard, i.e., $\tvp_1\vert_H=\vp_1+\odin$, 
then the only non-minimal good orbits in\/ $\N(\sovm)$ are $\co(2^4)_{I}$ and $\co(2^4)_{II}$.

{\sf (ii)} If the embedding $H=\GR{B}{3}\subset\GR{D}{4}$ is given by $\tvp_1\vert_H=\vp_3$, the spinor
representation, then the only non-minimal good orbits in\/ $\N(\sovm)$ are $\co(3,1^5)$ and one of the orbits $\co(2^4)_{\sf I,II}$.
\end{lm}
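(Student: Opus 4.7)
The strategy is to reduce both parts to analysis of the three $SO_8$-orbits covering $\omin=\co(2^2,1^4)$ in $\N(\sovm)/SO_8$, namely $\co(3,1^5)$, $\co(2^4)_{\sf I}$, and $\co(2^4)_{\sf II}$. All three have dimension~$12$ and are pairwise incomparable, so $\bco=\co\cup\omin\cup\{0\}$ for each of them. Since $(\eus P_1)$ for $(H,\omin)$ holds in both embeddings --- by item~4 of Table~\ref{table:odin} for the standard one, and by transporting via a triality of $\sovm$ for the spinor one --- we have $\omin\cap\me=\{0\}$ in both cases, and goodness of a covering orbit $\co$ reduces to $\co\cap\me=\varnothing$.

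For part~(i) I would argue by direct computation. Writing $\bbk^8=\bbk^7\oplus\bbk$ with $H=SO_7$ fixing the one-dimensional summand, elements $X\in\me$ are parametrised by vectors $v\in\bbk^7$ via $X\colon(x,t)\mapsto(tv,-\langle x,v\rangle)$. A short calculation gives $X^3=\langle v,v\rangle\cdot Y$ for an explicit $Y\ne 0$, so $X$ is nilpotent iff $v$ is isotropic; for nonzero isotropic $v$ one finds $X^2\ne 0$ and $\dim\ker X=6$, forcing Jordan type $(3,1^5)$. Thus the nonzero nilpotents of $\me$ all lie in $\co(3,1^5)$, which excludes this orbit. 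On the other hand, every $X\in\co(2^4)_{\sf I}\cup\co(2^4)_{\sf II}$ satisfies $X^2=0$ whereas a nonzero nilpotent in $\me$ does not, so combined with $\omin\cap\me=\{0\}$ this yields $(\eus P_1)$ for both very even orbits.

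For part~(ii) I would use triality rather than repeat the computation. The three 8-dimensional $\sovm$-modules $\tvp_1,\tvp_3,\tvp_4$ are cyclically permuted by $\mathrm{Out}(D_4)=S_3$, and each of the three covering orbits is characterised by having Jordan type $(3,1^5)$ on exactly one of them (and type $(2^4)$ on the other two). Let $\tau$ be the involutive triality swapping $\tvp_1\leftrightarrow\tvp_3$ and fixing $\tvp_4$; then $\tau$ carries the spinor embedding to the standard one along with $\me_{\sf spin}\mapsto\me_{\sf std}$, so $\co$ is good for the spinor embedding iff $\tau(\co)$ is good for the standard one. By the characterisation above, $\tau$ fixes the unique covering orbit of type $(3,1^5)$ on $\tvp_4$ and swaps $\co(3,1^5)$ with the remaining very even orbit, so combining with part~(i) yields the two good orbits stated in~(ii).

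Finally, higher non-minimal orbits are excluded by a closure-order argument: the only partition $\blb\ge(2^4)$ in dominance order that does not satisfy $\blb\ge(3,1^5)$ is $(2^4)$ itself, and closures of non-very-even $SO_8$-orbits contain both dominated very even orbits, so any non-covering $\co>\omin$ contains $\co(3,1^5)$ (respectively, the non-good very even orbit in the spinor case) in its closure and is therefore non-good. The main subtlety is the precise action of triality on the three covering orbits via their Jordan types on $\tvp_1,\tvp_3,\tvp_4$; once this is established, everything else follows from the short explicit calculation in~(i) and the closure-order argument just sketched.
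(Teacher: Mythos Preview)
Your proof is correct and takes a genuinely different route from the paper's. The paper exploits the fact that both embeddings of $\GR{B}{3}$ are symmetric and invokes a black-box criterion (cited from~\cite[Remark\,2.3]{omin}): for a symmetric pair, $\co\cap\g_1\ne\varnothing$ if and only if every nonzero label of the {\it\bfseries wDd} of $\co$ sits at a white node of the Satake diagram. Writing down the Satake diagrams for the two embeddings (white node at position~$1$ for the standard one, at position~$3$ or~$4$ for the spinor one) and the four relevant {\it\bfseries wDd}'s settles both parts at once; higher orbits are excluded because all of them contain $\co(3,1^5)$ (respectively the non-good very even orbit) in their closure.

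Your approach is more self-contained in that it avoids the Satake machinery: the explicit computation in~(i) identifies $\N\cap\me$ directly (indeed $X^3=-\langle v,v\rangle\,X$, so the nonzero nilpotents in $\me$ all have Jordan type $(3,1^5)$), and the triality transport in~(ii) is natural and efficient. One minor remark: your characterisation of the three covering orbits via their Jordan types on $\tvp_1,\tvp_3,\tvp_4$ is correct but is itself a consequence of triality; it is perhaps cleaner just to say that $\mathrm{Out}(D_4)\cong S_3$ permutes these three orbits exactly as it permutes the nodes $1,3,4$ (visible from the {\it\bfseries wDd}'s), so the transposition fixing node~$4$ swaps $\co(3,1^5)$ with one very even orbit and fixes the other. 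What the paper's approach buys is uniformity: the Satake/{\it\bfseries wDd} criterion applies verbatim to all symmetric pairs and is reused throughout the paper (e.g.\ in Theorem~\ref{thm:non-minim-good}), whereas your argument is tailored to $\GR{D}{4}$.
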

\begin{proof}
In both cases $\h$ is a symmetric subalgebra, because these embeddings of $\sosm$ are conjugate 
under the whole non-connected group ${\sf Aut}(\sovm)$. 

{\sf (i)} Let $\sigma$ be the involution such that $(\sovm)^\sigma=\sosm$. Recall that $\me$ is the 
$(-1)$-eigenspace of $\sigma$. There is a criterion in terms of the {\it Satake diagram\/} of $\sigma$ and 
the {\it weighted Dynkin diagram\/} (=\,{\it\bfseries wDd}) of $\co$ that allows us to decide 
whether $\co\cap\g_1\ne \varnothing$, see~\cite[Remark\,2.3]{omin} and examples below. We refer 
to~\cite[Ch.\.4,\,\S\,4.3]{t41} and \cite[Sect.\,2.1]{omin} for generalities on the Satake diagrams. 
The {\it\bfseries wDd} for $\omin=\co(2^2,1^4)$ and the orbits covering $\omin$ are
\\[1ex]
\centerline{
$(2^2,1^4)$: 
\raisebox{-2.5ex}{\begin{tikzpicture}[scale=0.8, transform shape]  
\node (f)  at (5,0) {0};
\node (g) at (6,0) {1};
\node (h) at (7,.5) {0};
\node (i)  at (7,-.5){0};
\foreach \from/\to in {f/g, g/h, g/i}  \draw[-] (\from) -- (\to);        
\end{tikzpicture} };
\quad
$(3,1^5)$: 
\raisebox{-2.5ex}{\begin{tikzpicture}[scale=0.8, transform shape]  
\node (f)  at (5,0) {2};
\node (g) at (6,0) {0};
\node (h) at (7,.5) {0};
\node (i)  at (7,-.5){0};
\foreach \from/\to in {f/g, g/h, g/i}  \draw[-] (\from) -- (\to);        
\end{tikzpicture} };
\quad
$(2^4)_{\sf I}$: 
\raisebox{-2.5ex}{\begin{tikzpicture}[scale=0.8, transform shape]  
\node (f)  at (5,0) {0};
\node (g) at (6,0) {0};
\node (h) at (7,.5) {2};
\node (i)  at (7,-.5){0};
\foreach \from/\to in {f/g, g/h, g/i}  \draw[-] (\from) -- (\to);        
\end{tikzpicture} };
\quad
$(2^4)_{\sf II}$: 
\raisebox{-2.5ex}{\begin{tikzpicture}[scale=0.8, transform shape]  
\node (f)  at (5,0) {0};
\node (g) at (6,0) {0};
\node (h) at (7,.5) {0};
\node (i)  at (7,-.5){2};
\foreach \from/\to in {f/g, g/h, g/i}  \draw[-] (\from) -- (\to);        
\end{tikzpicture} }.
}
\\[2ex]
The Satake diagram for the standard embedding $\sosm\subset\sovm$ is \quad
\raisebox{-1.9ex}{\begin{tikzpicture}[scale=0.65, transform shape]  
\tikzstyle{every node}=[circle, draw] 
\node (f)  at (5,0) {};
\tikzstyle{every node}=[circle, draw, fill=black!65]
\node (g) at (6,0) {};
\node (h) at (7,.5) {};
\node (i)  at (7,-.5){};
\foreach \from/\to in {f/g, g/h, g/i}  \draw[-] (\from) -- (\to);        
\end{tikzpicture} }\ .
The criterion asserts that $\co\cap\g_1\ne\varnothing$ if and only if the nonzero labels of 
{\it\bfseries wDd} correspond to white nodes of the Satake diagram. Hence 
$\co(3,1^5)\cap\g_1\ne\varnothing$, while the  other three orbits are good. Since the closures of all 
larger orbits contain $\co(3,1^5)$, there are no other good orbits.

{\sf (ii)} The Satake diagram for $\mathfrak{spin}_7\subset\sovm$ is either \ \ 
\raisebox{-1.9ex}{\begin{tikzpicture}[scale=0.65, transform shape]  
\tikzstyle{every node}=[circle, draw] 
\node (h) at (7,.5) {};
\tikzstyle{every node}=[circle, draw, fill=black!65]
\node (f)  at (5,0) {};
\node (g) at (6,0) {};
\node (i)  at (7,-.5){};
\foreach \from/\to in {f/g, g/h, g/i}  \draw[-] (\from) -- (\to);        
\end{tikzpicture} }\ \
or \ \
\raisebox{-1.9ex}{\begin{tikzpicture}[scale=0.65, transform shape]  
\tikzstyle{every node}=[circle, draw] 
\node (i)  at (7,-.5){};
\tikzstyle{every node}=[circle, draw, fill=black!65]
\node (f)  at (5,0) {};
\node (g) at (6,0) {};
\node (h) at (7,.5) {};
\foreach \from/\to in {f/g, g/h, g/i}  \draw[-] (\from) -- (\to);        
\end{tikzpicture} }\ , hence the criterion above yields the conclusion.
\end{proof}
\vspace{.6ex} 

\begin{rmk}   \label{rmk:emb-D4-B3}
All embeddings $\GR{B}{3}\subset\GR{D}{4}$ are equivalent under the group 
$\mathsf{Aut}(\GR{D}{4})$. But this is not the case for the embeddings 
$H=\GR{B}{3}\subset\GR{B}{4}=G$. If $\tvp_1\vert_H=\vp_1+2\odin$, then there are no good nilpotent 
orbits for $(G,H)$. But if $\tvp_1\vert_H=\vp_3+\odin$, then $\omin(\mathfrak{so}_9)$ is good (see 
{\rus{N0}}7 in Table~\ref{table:odin}).  In both cases there is a chain 
$\GR{B}{3}\subset\GR{D}{4}\subset\GR{B}{4}$, where the second inclusion is standard, but the first 
inclusions are different, and this is essential! Probably, this is the reason, why our item~7 was 
overlooked in~\cite{bk94}.
\end{rmk}

\begin{thm}   \label{thm:non-minim-good}
Let $(H, \co)$ be a good pair such that $(G,H)$ occurs in Table~\ref{table:odin}  
and $\co$ is non-minimal. Then the only possibilities for $(G,H,\co)$ are:

{\sf (i)} \ $\bigl(SO_N, SO_{N-1}, \co_k:=\co(2^{2k}, 1^{N-4k})\bigr)$ with $2\le k\le [N/4]$ and $N\ge 8$.
For\/ $N=4k$, the partition $(2^{2k})$ is very even and we obtain two good orbits in $SO_{4k}$:
$\co_{k,{\sf I}}$ and $\co_{k,{\sf II}}$.

{\sf (ii)} \ $(\GR{B}{3}, \GR{G}{2}, \co(3, 1^4))$.
\end{thm}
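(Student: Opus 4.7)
The plan is a case-by-case traversal of the eleven items of Table~\ref{table:odin}. Since property $(\eus P_1)$ for $\co$ forces $(\eus P_1)$ for every $G$-orbit $\co'\subset\bco$, in each case it suffices to walk up the Hasse diagram of $\N/G$ starting at $\omin$ and test the orbits that cover the last orbit accepted as good; as soon as a covering orbit fails $(\eus P_1)$, no orbit above it can be good either.

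For the symmetric-pair items (1--5, and item~11 with $k=2$), the workhorse is the Satake-diagram criterion already used in Lemma~\ref{lm:trial}: a $G$-orbit $\co$ meets $\g_1=\me$ iff every nonzero label of its weighted Dynkin diagram is attached to a white node of the Satake diagram of $\sigma$. Applied to the orbit just above $\omin$ in items~1,~2, and~5, this detects a nonzero label on a black node, so $(\eus P_1)$ fails at once. For items~3 and~4, which together form the series $SO_{N-1}\subset SO_N$ producing case~(i), the same criterion singles out exactly the height-$2$ orbits $\co_k=\co(2^{2k},1^{N-4k})$ for $1\le k\le [N/4]$ as good. Equivalently, since $\me$ is the standard $SO_{N-1}$-module and, viewed inside $\mathfrak{so}_N\simeq\Lambda^2 V_N$, consists of rank-$2$ skew-symmetric elements, while $\co_k$ has rank $2k$, one gets $\co_k\cap\me=\varnothing$ for $k\ge 2$ directly, the smaller orbits in $\bco_k$ being handled by Table~\ref{table:odin}. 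The orbits above $\co_{[N/4]}$ necessarily carry a Jordan block of size~$\ge 3$, and the Satake criterion applied to $\co(3,1^{N-3})$ places a nonzero label on a white node, producing an element of $\me$ in it; for $N=4k$, both very even components $\co_{k,{\sf I}}$ and $\co_{k,{\sf II}}$ are good by the symmetry of the Satake diagram.

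For item~6 no further work is needed: Lemma~\ref{lm:B3-G2} already identifies $\co(3,1^4)$ as the only non-minimal good orbit, yielding case~(ii). For item~7 (the non-standard $\GR{B}{3}\subset\GR{B}{4}$ with $\tvp_1\vert_H=\vp_3+\odin$) I would exploit the chain $\mathfrak{spin}_7\subset\sovm\subset\mathfrak{so}_9$ from Lemma~\ref{lm:D4-G2} and compose the two projections: combining Lemma~\ref{lm:trial}(ii) with Lemma~\ref{lm:B3-G2}, each nilpotent orbit of $\mathfrak{so}_9$ covering $\omin$ either projects to a $\sovm$-orbit outside the good set $\{\omin,\co(3,1^5),\co(2^4)_{\sf I}\text{ or }\co(2^4)_{\sf II}\}$, or descends to a $\mathfrak{spin}_7$-orbit already ruled out by Lemma~\ref{lm:B3-G2}. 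Item~10 is handled by the same chain $\GR{D}{4}\supset\GR{B}{3}\supset\GR{G}{2}$ combined with Lemma~\ref{lm:B3-G2}, which cuts off everything above $\omin$.

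The residual cases are items~8,~9, and~11 with $k\ge 3$. For each I would exhibit explicitly an element of $\me$ lying in the orbit that covers $\omin$: for item~11 with any $k\ge 2$, the element $v\otimes w+w\otimes v$ with $v\in V_i$, $w\in V_j$ ($i\ne j$), viewed in $\mathfrak{sp}(V)\simeq S^2 V$, has rank~$2$ and therefore lies in $\me\cap\co(2^2,1^{2n-4})$; for items~8 and~9 the decomposition of $\me$ as an $\h$-module recorded in Section~\ref{subs:emb} supplies an analogous height-$2$ witness inside the covering orbit of $\omin$. The main obstacle, in my view, is precisely the absence of a uniform argument across items~8--11: the Satake criterion is not available and each item requires a short, hand-crafted computation; the rest of the proof is bookkeeping with tools established in the preceding lemmas.
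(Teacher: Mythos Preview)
Your overall architecture matches the paper's proof: Satake criterion for the symmetric items, Lemma~\ref{lm:B3-G2} for item~6, and intermediate chains for items~7 and~10. The orthogonal series (items~3,~4) is handled exactly as in the paper, and your rank argument for $\me\subset\mathfrak{so}_N$ is a pleasant variant of the Satake computation.

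Where you diverge is in items~8,~9, and~11 with $k\ge 3$. The paper does \emph{not} hunt for explicit elements there. For item~9 it simply invokes Corollary~\ref{cor:for-P2}: $\dim\h-\rk\h=6=\dim\omin$, so no larger orbit can satisfy $(\eus P_2)$, hence none satisfies $(\eus P_1)$. For items~8 and~11 ($k\ge 3$) the paper inserts an intermediate \emph{symmetric} subgroup $H_1$ ($\GR{B}{4}$ for item~8, $\GR{C}{n_1}\times\GR{C}{n_2+\dots+n_k}$ for item~11); since $\me_1\subset\me$, the good orbits for $(G,H)$ are contained in those for $(G,H_1)$, and the Satake check on the latter already kills everything above $\omin$. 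Your explicit-element route is also correct---for items~8 and~9, $\h$ is the long-root subalgebra, so a short root vector lies in $\me$ and represents the covering orbit $\tilde{\mathsf A}_1$; for item~11 your rank-$2$ tensor $v{\cdot}w$ with $v\in V_i$, $w\in V_j$ does lie in $\me\cap\co(2^2,1^{2n-4})$ because $\omega(v,w)=0$---but you leave items~8 and~9 as a promissory ``hand-crafted computation'', whereas the paper's arguments are one line each.

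One genuine muddle: in item~7 you invoke Lemma~\ref{lm:B3-G2}, but that lemma concerns $\GR{G}{2}\subset\GR{B}{3}$ and is irrelevant here. The chain is $\GR{B}{4}\supset\GR{D}{4}\supset\mathfrak{spin}_7$; the first step uses the Satake criterion for $(\GR{B}{4},\GR{D}{4})$ (items~3/4), which leaves only $\co(2^4,1)$ as a potential non-minimal good orbit; its $\bvp_1$-image in $\sovm$ is $\co(3,2^2,1)$, and Lemma~\ref{lm:trial}{\sf (ii)} (not Lemma~\ref{lm:B3-G2}) shows this lies outside the good set for the spin embedding, whence $\ov{\co(2^4,1)}\cap\me\ne\{0\}$ by pulling back a nonzero element of $\ov{\co(3,2^2,1)}\cap\me_2$. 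Once that citation is fixed, your argument for item~7 coincides with the paper's.
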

\begin{proof}
{\sf (i)} \ If $(\g,\h)$ is a symmetric pair, then we apply the above-mentioned criterion with Satake 
diagrams and {\it\bfseries wDd}. For items 1--5 and 11 (with $k=2$), this only singles out the 
orbits $\co_k$ in $SO_{N}$ (items 3 and 4). Let us provide some details for the orthogonal case.

The Satake diagram for $(SO_N, SO_{N-1})$ has only one white node (the leftmost one). For instance,
for $N$ even, it is \rule{0pt}{4.3ex} \ \ 
\raisebox{-1.75ex}{\begin{tikzpicture}[scale=0.65, transform shape]  
\tikzstyle{every node}=[circle, draw, fill=black!65]
\node (c) at (2,0) {};
\node (g) at (6,0) {};
\node (h) at (7,.45) {};
\node (i)  at (7,-.45){};
\tikzstyle{every node}=[circle]
\node (d) at (3,0) {};
\node (e) at (4,0) {$\cdots$};
\node (f)  at (5,0) {};
\tikzstyle{every node}=[circle, draw]
\node (b) at (1,0) {};
\foreach \from/\to in {b/c, c/d, f/g, g/h, g/i}  \draw[-] (\from) -- (\to);        
\end{tikzpicture} } \ .
Computing the {\it\bfseries wDd} corresponding to a partition, one readily verifies that 
$\co(3,1^{N-3})\cap\me\ne \varnothing$ \rule{0pt}{4ex} and $\co_k\cap\me=\varnothing$ for all $k$. The 
description of the orbit closures via partitions shows that 
$\ov{\co_k}=\co_k\cup \co_{k-1}\cup\cdots \cup \co_1\cup\{0\}$. Hence the orbits $\co_k$, 
$1\le k\le [N/4]$, are good. (Cf. the proof of Lemma~\ref{lm:trial}.) Here $\co_1=\omin$. All other 
nilpotent orbits contain $\co(3,1^{N-3})$ in their closures. Therefore, they are not good.
 
The Satake diagrams for items 1,\,2,\,5,\,11 in Table~\ref{table:odin} can be found in~\cite[Example\,5.8]{omin}.

{\sf (ii)} \ It remains to examine items 6--10 and 11 (with $k\ge 3$).

\textbullet\quad The pair $(\GR{B}{3},\GR{G}{2})$ is considered in Lemma~\ref{lm:B3-G2}. This yields
the good orbit $\co(3,1^4)$.

\textbullet\quad $(\GR{G}{2},\GR{A}{2})$: \ if property $(\eus P_2)$ holds for $\co$, 
then $\dim\co\le \dim\h-\rk\,\h=6$ (Corollary~\ref{cor:for-P2}). Since $\dim\omin=6$, the only orbit
having $(\eus P_1)$ is $\omin$, cf. Theorem~\ref{thm:ast-to-diamond}.

\textbullet\quad For items~7,8, and~10, there are the chains
$G\supset H_1\supset H$ such that $H_1$ is a symmetric subgroup of $G$. This allows us to prove that
no non-minimal orbits are good.

{\rus N0}7:  \ $\GR{B}{4}\supset \GR{D}{4}\supset  \GR{B}{3}$; \quad
{\rus N0}8:  \ $\GR{F}{4}\supset \GR{B}{4}\supset  \GR{D}{4}$; \quad
{\rus N0}10: \  $\GR{D}{4}\supset \GR{B}{3}\supset  \GR{G}{2}$.

\noindent
For instance, consider the last case. We use the notation of Lemma~\ref{lm:D4-G2}. If 
$\bco\cap\me=\{0\}$, then $\bco\cap\me_1=\{0\}$ as well. By Lemma~\ref{lm:trial}{\sf (i)}, the  
non-minimal good orbits for $(\GR{D}{4},\GR{B}{3})$ are $\co(2^4)_{\sf I}$ and $\co(2^4)_{\sf II}$. 
If $\hat\co$ is either of them, then $\dim\hat\co=12$ and 
$\bvp_1(\Omega_{\hat\co})=\co(3,2,2)\subset \mathfrak{so}_7$ 
(the only nilpotent orbit of dimension~12 in $\mathfrak{so}_7$). By Lemma~~\ref{lm:B3-G2}, $\GR{G}{2}$ 
does not have a dense orbit in $\bvp_1(\hat\co)$. Then $\GR{G}{2}$ does not have a dense orbit in 
$\hat\co$, too. Therefore, $\ov{\hat\co}\cap\me\ne\{0\}$ and there are no non-minimal good orbits.

\textbullet\quad {\rus N0}11:  For $H=\GR{C}{n_1} {\times}\dots {\times} \GR{C}{n_k}$ and $k\ge 3$, there 
is also an intermediate symmetric subgroup 
$H_1=\GR{C}{n_1} \times\GR{C}{n_2+\dots+n_k}$. The only good nilpotent orbit for $H_1$ is
$\omin(\spn)$. Therefore, the same is true for the smaller group $H$.
\end{proof}
The number of good orbits for $(SO_N, SO_{N-1})$ equals $[N/4]$, if $\co_1=\omin$ is also counted.
For all non-minimal good orbits in Theorem~\ref{thm:non-minim-good}, we describe below the 
corresponding orbit in $\h$ and their $G$-saturation. As always, $\Omega_\co$ is the dense $H$-orbit in 
$\co$.

\begin{prop}         
\label{prop:fi-non-min} 
{\sf (i)}  If\/ $\co=\co(2^{2k},1^m)\subset\mathfrak{so}_N$, where $N=4k+m$ and $m>0$, then 
$\bvp(\Omega_\co)=\co(3,2^{2k-2},1^m)\subset\mathfrak{so}_{N-1}$. For $m=0$, one has two good very 
even orbits $\co(2^{2k})_{\sf I}$ and $\co(2^{2k})_{\sf II}$ in $\mathfrak{so}_{4k}$, and the projections of 
both orbits yield the same $SO_{4k-1}$-orbit $\co(3,2^{2k-2})$. In all cases, one has 
$\tilde\co:=SO_N{\cdot}\bvp(\Omega_\co)=\co(3,2^{2k-2},1^{m+1})$, $\hot(\co)=2$, 
and\/ $\hot(\tilde\co)=3$.

{\sf (ii)} \ If\/ $\co=\co(3,1^4)$ in $\mathfrak{so}_7$, then $\bvp(\Omega_\co)=\co_{\sf sub}(\eus {G}_{2})$  
and $G{\cdot}\bvp(\Omega_\co)=\tilde\co=\co(3,3,1)$. Here $\hot(\co)=2$ and $\hot(\tilde\co)=4$.
\end{prop}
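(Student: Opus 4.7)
The plan is to handle both parts by computing explicitly in the defining orthogonal representation $V = \bbk^N$, reading off the Jordan types of $a$ and $\tilde e$ from the relations forced on them and then deducing heights from the $\tri$-module decomposition of $\mathfrak{so}(V) \cong \wedge^2 V$.

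For part (i), I would fix the orthogonal splitting $V = v^\perp \oplus \bbk v$, where $H = SO_{N-1}$ stabilises the non-isotropic vector $v$. The identification $\me \simeq v^\perp$ via $b \mapsto w := b(v)$ encodes an element $e = a+b \in \bco$ by the pair $(a, w) \in \mathfrak{so}(v^\perp) \times v^\perp$. Expanding $e^2 = 0$ (which holds since $e \in \co(2^{2k},1^m)$ has square-zero Jordan type) and $[a,b] = 0$ (from Prop.~\ref{thm-6.1}(2)) on $v$ and on a general $w' \in v^\perp$ produces the three relations $a(w)=0$, $\langle w,w\rangle = 0$, and $a^2(w') = \langle w',w\rangle\,w$. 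Choosing $e \in \Omega_\co$, which exists by Theorem~\ref{thm:ast-to-diamond}, ensures $w \neq 0$, so $\Ima(a^2) = \bbk w$ and $a^3 = 0$. Consequently $a$ has exactly one Jordan block of size $3$ (since $\rank(a^2) = 1$); a short check shows $\Ima(e) \cap \bbk v = 0$, whence $\rank(a) = \rank(e) = 2k$, forcing $2k-2$ blocks of size $2$. Balancing the total $N-1$ of the partition then forces $m$ blocks of size $1$, so $a \in \co(3, 2^{2k-2}, 1^m) \subset \mathfrak{so}_{N-1}$. For the very-even case $m=0$, the target $\mathfrak{so}_{4k-1}$ has odd rank and admits no very-even splitting, so $\co(2^{2k})_{\sf I}$ and $\co(2^{2k})_{\sf II}$ both project onto the single orbit $\co(3,2^{2k-2})$. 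Viewing $a$ back inside $\mathfrak{so}_N$ (where it kills $v$) appends a trivial block for $\bbk v$, yielding $\tilde\co = \co(3, 2^{2k-2}, 1^{m+1})$.

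For part (ii), Prop.~\ref{thm-6.1}(1) gives $\dim\bvp(\Omega_\co) = 10$, and since $G_2(a_1)$ is the only nilpotent $\GR{G}{2}$-orbit of that dimension, $\bvp(\Omega_\co) = \co_{\sf sub}(\eus G_2) = G_2(a_1)$. To identify $\tilde\co$, I would read the weighted Dynkin diagram $(0,2)$ of $G_2(a_1)$ and compute the $h_a$-eigenvalues on the $7$-dimensional module $\vp_1$: since the weights of $\vp_1$ are $0$ together with the six short roots of $\eus G_2$, these eigenvalues are $\{0,0,0,\pm 2,\pm 2\}$, corresponding to the $\tri$-module $2V_2 \oplus V_0$ (writing $V_d$ for the $(d{+}1)$-dimensional irreducible). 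The induced Jordan type on $V$ is $(3,3,1)$, so $\tilde\co = \co(3,3,1)$.

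The height statements in both parts follow from the $\tri$-decomposition of $\wedge^2 V$: the Clebsch--Gordan identities $\wedge^2 V_1 = V_0$, $S^2 V_1 = V_2$, $\wedge^2 V_2 = V_2$, $S^2 V_2 = V_4 \oplus V_0$, and $V_2 \otimes V_1 = V_3 \oplus V_1$ yield maximal $h$-weights $2, 3, 2, 4$ in the four cases and so match the claimed heights. The main technical work lies in the matrix computation of part (i) that extracts the identity $a^2(w') = \langle w',w\rangle w$ from $e^2 = 0$; once that is in place, everything else reduces to routine partition combinatorics and Clebsch--Gordan book-keeping.
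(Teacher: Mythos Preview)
Your argument is correct, and for part {\sf (i)} it is genuinely different from the paper's. The paper simply observes that $\dim\co(2^{2k},1^m)=\dim\co(3,2^{2k-2},1^m)$ and then asserts that one can exhibit, in the anti-diagonal matrix model of $\mathfrak{so}_N$, an explicit $e\in\co$ whose $\h$-component has the desired partition; the $G$-saturation $\tilde\co$ is then read off from the general rule that the standard inclusion $\mathfrak{so}_{N-1}\hookrightarrow\mathfrak{so}_N$ sends $\co(\bmu)$ into $\co(\bmu,1)$. Your approach is more intrinsic: you \emph{derive} the Jordan type of $a$ from the equation $e^2=0$ alone (the relation $[a,b]=0$ is in fact already a consequence of $e^2=0$ here), obtaining $a(w)=0$, $\langle w,w\rangle=0$, and $a^2(w')=\langle w',w\rangle\,w$, from which $\rk(a^2)=1$, $a^3=0$, and $\rk(a)=\rk(e)=2k$ follow. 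This explains \emph{why} the partition is $(3,2^{2k-2},1^m)$ rather than merely verifying it on one example. Two small points worth making explicit: the non-vanishing $w\ne 0$ on $\Omega_\co$ holds because $\lg\co\rg=\g$ forces $\bpsi(\Omega_\co)\ne\{0\}$; and the step from $\Ima(e)\cap\bbk v=0$ to $\rk(a)=\rk(e)$ uses that $w\in\Ima(a^2)\subset\Ima(a)$, so that $\pi_{v^\perp}(\Ima e)=\bbk w+\Ima(a)=\Ima(a)$.

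For part {\sf (ii)} the two arguments are close: both identify $\bvp(\Omega_\co)$ by dimension and then analyse the $\tri$-module structure. The paper decomposes $\mathfrak{so}_7\vert_{\tri}=\sfr_4+5\sfr_2+\sfr_0$ and identifies $\tilde\co$ by its dimension ($21-7=14$), whereas you read the Jordan type on the defining $7$-dimensional module directly from the short-root eigenvalues; your route is slightly quicker. Your height computations via Clebsch--Gordan on $\wedge^2 V$ are correct (for $k\ge 2$, which is the range of the proposition) and make explicit what the paper leaves to the height formulae of \cite{p99}.
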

\begin{proof}
{\sf (i)}   The formulae for $\dim\co(\blb)$ via $\blb$~\cite[Chap.\,6]{CM} show that 
\[
    \dim\co(2^{2k},1^m)=\dim\co(3,2^{2k-2},1^m)=2k(2k+m-1) . 
\]
Therefore, it suffices to point out $e\in \co\subset \mathfrak{so}_N$ such that
$\bvp(e)\in\mathfrak{so}_{N-1}$ has the required partition. This can be done via the use of the matrix 
model of orthogonal Lie algebras as skew-symmetric matrices with respect to the anti-diagonal.
The assertion about the partition of $\tilde\co$ stems from the general fact that if $\co=\co(\bmu)\subset \mathfrak{so}_{N-1}$ and the embedding $\mathfrak{so}_{N-1}\subset\mathfrak{so}_{N}$ is standard, then the orbit
$SO_N{\cdot}\co$ corresponds to the partition $\tilde{\bmu}=(\bmu,1)$.

{\sf (ii)} Here $\dim\co=10$ and  $\co_{\sf sub}(\eus {G}_{2})$ is the only nilpotent orbit of dimension 10. To compute the $SO_7$-saturation of $\bvp(\omco)=\co_{\sf sub}(\eus {G}_{2})$, we include $e'\in\bvp(\omco)$ into $\tri\subset \eus {G}_{2}$ 
and  consider $\eus {G}_{2}$ and $\me=\vp_1$ as $\tri$-modules. If $\sfr_j$ denotes the simple 
$\tri$-module of dimension $j+1$ (hence $\sfr_0=\odin$), then 
\[
     \eus {G}_{2}\vert_{\tri}=\sfr_4+3\sfr_2 \text{ and } \ \me\vert_{\tri}=2\sfr_2+\sfr_0 .
\]
Then $\sosm\vert_{\tri}=\sfr_4+5\sfr_2+\sfr_0$. Hence $\dim SO_7{\cdot}e'=14$ and this orbit is 
$\co(3,3,1)$.
\end{proof}

\begin{table}[ht]
\caption{The non-minimal good orbits}
\label{table:dva}
\begin{tabular}{r|c| c c c |cc|}    \hline
& $G,H$ & $\co$ & $\bvp(\omco)$ & $\dim\co$ & $\tilde\co$ & $\dim\tilde\co$ \\ \hline
{\sf (i)} & $SO_{N}, SO_{N{-}1}$ & $(2^{2k},\!1^m)$ & $(3,2^{2k-2},\!1^m)$ & $2k(2k{+}m{-}1)$ & 
$(3,2^{2k-2},\!1^{m+1})$   & $2k(2k{+}m)$
\\ 
{\sf (ii)} & $\GR{B}{3},\GR{G}{2}$ & $(3,1^4)$ & $\co_{\sf sub}(\eus G_2)$ & 10 & $(3,3,1)$ & $14$ \\ \hline
\end{tabular}
\end{table}

\begin{rmk}  \label{rem:levass-smith}
The orthogonal series of shared orbits in {\sf (i)} seems to be new. However, for the embedding 
$\GR{G}{2}\subset \GR{B}{3}$, the pairs of orbits $(\omin(\sosm), \tilde{\mathsf A}_1)$ and 
$(\co(3,1^4), \mathsf G_2(a_1))$  have first been discovered by Levasseur and Smith, see~\cite[2.6]{ls88}. 
Since $\tilde{\mathsf A}_1$ is the only orbit of dimension~8 in $\eus G_2$, we may write $\co_8$ for it. It is proved in~\cite{ls88} that $\ov{\co_8}$ is not normal
and $\bvp:  \ov{\omin(\sosm)}\to \ov{\co_8}$ is the normalisation. Historically, it was the first example of
a nilpotent orbit with non-normal closure in an exceptional Lie algebra. Such examples in classical Lie 
algebras has been found earlier in~\cite{KP82}.
\end{rmk}

\begin{ex}   \label{ex:deg-non-min} 
Using Proposition~\ref{prop:degree}, we compute $\deg\bvp$ for the good non-minimal orbits. 
If $G$ is classical, then the formulae for $\pi_1(\co)$ via partitions are given in \cite[Ch.\,6.1]{CM}. For 
the exceptional $G$, consult tables in Chapter\,8 of \cite{CM}.

{\sf (i)} \ For the orbits in Proposition~\ref{prop:fi-non-min}{\sf (i)}, we obtain the following. \\
Let $m$ be odd, hence $G=\GR{B}{n}$ and $H=\GR{D}{n}$, where $n=2k+[m/2]$. 
\begin{itemize}
  \item if $m\ge 3$, then $\pi_1(\co)=\{1\}$, \ $\pi_1(\bvp(\omco))=\BZ_2$, \ and $\deg\bvp=2$.
  \item if $m=1$, then $\pi_1(\co)=\BZ_2$, \ $\pi_1(\bvp(\omco))=\BZ_2^2$, \ and $\deg\bvp=2$.
\end{itemize}
     Let $m$ be even, hence $G=\GR{D}{n}$ and $H=\GR{B}{n-1}$.
\begin{itemize}
  \item if $m\ge 2$, then $\pi_1(\co)=\{1\}$, \ $\pi_1(\bvp(\omco))=\BZ_2$, \ and $\deg\bvp=2$.
  \item if $m=0$, then $\pi_1(\co)=\BZ_2$ for both orbits, \ $\pi_1(\bvp(\omco))=\BZ_2$, \ and 
$\deg\bvp=1$.
\end{itemize}
Thus, we have $\deg\bvp=1$ only if $m=0$; otherwise, $\deg\bvp=2$.

{\sf (ii)} \ For the orbits in Proposition~\ref{prop:fi-non-min}{\sf (ii)}, we have $\pi_1(\co(3,1^4))=\BZ_2$ 
and $\pi_1(\co_{\sf sub}(\eus {G}_{2}))$ is the symmetric group of order 6. Hence $\deg\bvp=3$.
\end{ex}
An interesting phenomenon occurs if $m=0$ in {\sf (i)}. Since $SO_4$ is not simple, we assume that $k\ge 2$. 
The two good very even orbits in $\g=\mathfrak{so}_{4k}$ are $\co_{\sf I}:=\co(2^{2k})_{\sf I}$ and 
$\co_{\sf II}:=\co(2^{2k})_{\sf II}$. Here
$\bvp(\ov{\co_{\sf I}})=\bvp(\ov{\co_{\sf II}})=\ov{\co(3,2^{2k-2})}$ and both projections are birational.
It follows from~\cite{he79} that  $\ov{\co_{\sf I}}$ and $\ov{\co_{\sf II}}$ are normal. But the orbit 
$\co(3,2^{2k-2})$ has a {\it minimal degeneration of type $e$} and therefore $\ov{\co(3,2^{2k-2})}$ is 
not normal~\cite[Theorem\,16.2]{KP82}. Hence either of projections
$\bvp: \ov{\co_{\sf I}} \to \ov{\co(3,2^{2k-2})}$ and 
$\bvp: \ov{\co_{\sf II}} \to \ov{\co(3,2^{2k-2})}$ is the normalisation. This is similar to the phenomenon
discovered in \cite{ls88}, see Remark~\ref{rem:levass-smith}. Here the involution $\sigma$ defining 
$\mathfrak{so}_{4k-1}$ inside $\mathfrak{so}_{4k}$ permutes $\co_{\sf I}$ and $\co_{\sf II}$. Hence the 
$O_{4k}$-orbit $\co_{\sf I\& II}:=\co_{\sf I}\cup\co_{\sf II}$ is $\sigma$-stable and the projection
$\tilde\bvp: \ov{\co_{\sf I\& II}}\to \ov{\co(3,2^{2k-2})}$ has degree 2.
\\ \indent
For $m>0$, the good orbit $\co=\co(2^{2k},1^m)$ is $\sigma$-stable and $\ov{\co(3,2^{2k-2},1^m)}$ 
is normal~\cite{KP82}. This implies that the projection $\bvp: \bco\to \bvp(\bco)$
is the quotient map with respect to the group $\langle\sigma\rangle\simeq \BZ_2$.

\section{Classification}      \label{sect:classif}
\noindent
In this section, we prove that if $(H, \omin)$ is good, then the pair $(G,H)$ is contained in Table~\ref{table:odin}. This also 
means that Table~\ref{table:odin}, Theorem~\ref{thm:non-minim-good}, and 
Proposition~\ref{prop:fi-non-min} provide the complete list of good pairs $(H,\co)$ and shared orbits 
related to the simple Lie algebras $\g$. By Theorem~\ref{thm:main2}, we may consider either of the 
properties $(\eus P_1)$ and $(\eus P_2)$.

A subalgebra $\h$ of $\g$ is called {\it regular}, if it is normalised by a Cartan subalgebra of $\g$. Then 
the root system of $\h$ is a sub-root system of $\g$. A simple observation is that if $(H,\omin)$ is good 
and  $\h$ is regular, then $\h$ must contain all long roots of $\g$ (otherwise, 
$\omin\cap\me\ne\varnothing$). In particular, if $\g$ is simply-laced, then $\h$ cannot be regular. For the
non-simply-laced cases, all possible good pairs with regular $\h$ are given by items 1,\,3,\,8,\,9,\,11 
in Table~\ref{table:odin}.

\subsection{The classical groups $G$} \label{subs:class}
\leavevmode\par
{\bf (I)} \ The series $\GR{A}{n}$ is considered in Proposition~\ref{diamond-An}.

{\bf (II)} For $Sp_{2n}=Sp(\VV)$, $\bomin$ is the categorical quotient of $\VV=\VV_{\tvp_1}$ by $\BZ_2$
(the nonzero element of $\BZ_2$ acts on $\VV$ as $-1$). Therefore, $H$ has a dense orbit in
$\omin$ if and only if $H$ has a dense orbit in the $Sp_{2n}$-module $\VV$.  
By~\cite[Theorem\,1.2]{p03}, we have the following result:

{\it If $H\subset Sp(\VV)$ is reductive and\/ $\bbk[\VV]^H=\bbk$, then 
$H=Sp(\sfr_1)\times\dots\times Sp(\sfr_k)$ for the decomposition $\VV=\bigoplus_{i=1}^k \sfr_i$ into the irreducible $H$-submodules.}

\noindent By Rosenlicht's theorem~\cite[1.6]{brion}, $\bbk[\VV]^H=\bbk$ if and only if $H$ has a dense 
orbit in $\VV$, which is exactly what we need here.

{\bf (III)} The orthogonal series $\GR{B}{n}$ and $\GR{D}{n}$. Now $H\subset G=SO(\VV)$.
Using isomorphisms in small dimensions, we may assume that $\dim\VV\ge 7$. Then 
$\mathfrak{so}(\VV)\simeq \wedge^2\VV=\VV_{\vp_2}$ is a fundamental $SO(\VV)$-module and 
$\BP(\omin)\simeq SO(\VV)/P_2$, where $P_2$ is the maximal parabolic subgroup corresponding to 
the simple root $\ap_2$.

If $H$ has a dense orbit in $\omin$, then $H$ has also a dense orbit in the flag variety $SO(\VV)/P_2$. 
A classification of reductive subgroups of orthogonal groups having a dense orbit in the flag varieties 
$SO(\VV)/P_k$, $1\le k\le [(\dim\VV)/2]$, is obtained by Kimelfeld~\cite{kim79,kim}. However,
it can happen that $H$ has a dense orbit in $\BP(\omin)$, but not in $\omin$. Therefore, our task is to
pick the cases with $k=2$ in Kimelfeld's classification and exclude those of them for which $H$ does not 
have a dense orbit in $\omin$.

$(\boldsymbol{\ap})$: If $\VV$ is a simple $H$-module, then the subgroups $H\subset SO(\VV)$
having a dense orbit in $\BP(\omin)$ are contained 
in~\cite[Table\,1]{kim79}. We meet here our item~6, the embedding $Spin_7\subset SO_8$, which is 
equivalent to our item~4 with $n=3$ (cf. Lemma~\ref{lm:trial}), and three other cases:

\begin{tabular}{lrll}
(1) &  $SL_2\subset SO_{5\ }$, & \ $\tvp_1\vert_H=\vp^4$, & \ $\me=\vp^6$ . \\
(2) &  $Spin_9\subset SO_{16}$,  & \ $\tvp_1\vert_H=\vp_4$, & \ $\me=\vp_3$ .\\
(3) &  $Sp_{2s}\times SL_2\subset SO_{4s}$, & \  $\tvp_1\vert_H=\vp_1\vp'$, 
& \ $\me=\vp_2{\vp'}^2$ ; \\
\end{tabular}
\\[.7ex]
\textbullet \ \ Case (1) is discarded because $4=\dim\omin>\dim SL_2=3$. 

\noindent
\textbullet \ \ For (2):  Now $H=\GR{B}{4}$ and $G=\GR{D}{8}$. 
The highest root of $\GR{D}{8}$ is $\tvp_2$, hence the component of grade $n$ in $\bbk[\bomin]$ is 
$\bbk[\bomin]_n=\VV_{n\tvp_2}$, i.e., $\tvp_2^n$ in our notation. Our goal is show that 
$\bbk[\bomin]^H\ne \bbk$. More precisely, we shall prove that
$\dim (\tvp_2^2)^H=1$. To this end, we need dimension of the subspace of $H$-invariants in some other $G$-modules. 

\noindent 
{\bf --} \ Since $\tvp_1\vert_H=\vp_4$ is a simple orthogonal $H$-module, the symmetric square $S^2\tvp_1=\tvp_1^2\oplus\odin$ contains a unique $H$-invariant. Therefore,  $(\tvp_1^2)^H=\{0\}$.

\noindent 
{\bf --} \ $\tvp_2\vert_H=\wedge^2(\tvp_1\vert_H)=\wedge^2\vp_4=\vp_2+\vp_3$ (the sum of two 
orthogonal $H$-modules). It follows that $S^2\tvp_2=\odin+\tvp_1^2+\tvp_2^2+\tvp_4$ contains only 
two linearly independent $H$-invariants and $\dim(\tvp_2^2+\tvp_4)^H=1$. 
Since $\tvp_4=\wedge^4\tvp_1$, we have $\tvp_4\vert_H=\wedge^4 (\tvp_1\vert_H)=\wedge^4\vp_4$. 
Therefore, it suffices to prove that $(\wedge^4\vp_4)^H=\{0\}$.

\noindent 
{\bf --} \ To any orthogonal $H$-module $\VV$, one may associate the $H$-module
$\mathsf{Spin}(\VV)$ related to the exterior algebra of $\VV$~\cite[Section\,2]{p01}. The orthogonal
$H$-module $\vp_4$ has no zero weight. Hence using~\cite[Prop.\,2.4(ii)]{p01}, we obtain
\[
         \wedge^\bullet\vp_4={\mathsf{Spin}}(\vp_4)\otimes {\mathsf{Spin}}(\vp_4) .
\]
By \cite[Example\,5.7(2)]{p01}, one has ${\mathsf{Spin}}(\vp_4)=\vp_1^2+\vp_3+\vp_1\vp_4$ (the sum 
of three orthogonal $H$-modules). Therefore, $\dim (\wedge^\bullet\vp_4)^H=3$. Since
$\wedge^i\vp_4\simeq \wedge^{16-i}\vp_4$, the $H$-invariants occur only in degrees $i=0,\,8,\,16$.

\noindent 
{\bf --} \ Thus, we obtain that $\dim(\tvp_4)^H=0$ and hence $\dim(\tvp_2^2)^H=1$. Therefore, $H$ does not have a dense orbit in $\omin$.

\noindent
\textbullet \ \ For (3), the similar method works. One also proves here that $\dim \bbk[\omin]^H_2=1$. 

$(\boldsymbol{\beta})$: If the $H$-module $\VV$ is the sum of two totally isotropic $H$-invariant 
subspaces, then for all cases with $k=2$ we have $\g^\h\ne \{0\}$, see~\cite[Table\,4]{kim79}. Hence 
there cannot be a dense $H$-orbit in $\omin$.

$(\boldsymbol{\gamma})$: Suppose that $\VV=\VV_1\oplus\VV_2$ is the sum of orthogonal 
$H$-modules and $d_1=\dim\VV_1\le \dim\VV_2=d_2$.
\\
{\bf --} \ If $d_1\ge 2$, then~\cite[Table\,7]{kim79} shows that either $H=H_1$ is symmetric, or there is an intermediate symmetric subgroup $H_1$, i.e., $H\subset H_1\subset SO(\VV)$. In all these cases,
the pair $(H_1,\omin)$ is already not good.
\\
{\bf --} \ If $d_1=1$, then we meet in~\cite[Table\,8]{kim79} our items 3,\,4,\,7,\,10 and some other cases, which are easily seen to be bad. For instance, the case 
$H=Spin_9\subset SO_{16}\subset SO_{17}=G$ with 
$\tvp_1\vert_H=\vp_4+\odin$ is clearly worse than the bad case (2) in part~$(\boldsymbol{\ap})$. 

This completes our classification for the orthogonal groups.

\subsection{The exceptional groups $G$}  
\label{subs:except} 
As explained above, if $\h$ is regular and $(H,\omin)$ is good, then $\g$ has roots of different length
and $\h$ contains all long roots of $\g$.
This yields items~1,\,8,\,9 in Table~\ref{table:odin}. Following E.B.\,Dynkin, we say that $\h$ is an
$S$-{\it subalgebra\/} of $\g$, if it is not contained in a proper regular subalgebra.
A classification of the $S$-subalgebras of the exceptional Lie algebras and the inclusion relation between 
them is obtained in~\cite[\S\,14]{Dy52}. In particular, the maximal $S$-subalgebras are listed in
Theorem~14.1 in loc.\,sit., see also~\cite[Chap.\,6, \S\,3.3]{t41}. 

\textbullet \ \ Most of the maximal $S$-subalgebras $\h$ are not good, because 
$\dim\h-\rk\,\h <\dim\omin$. This already shows that there are no non-regular good subgroups in 
$\GR{G}{2}$ and $\GR{F}{4}$. 
There remain only {\bf three} cases with $\dim\h-\rk\,\h \ge\dim\omin$ in series $\GR{E}{n}$. 

\textbullet \ \ The symmetric subalgebra $\eus F_{4}$ in $\eus E_6$ is good
(item~2 in Table~\ref{table:odin}), but all smaller $S$-subalgebras of $\eus E_6$ inside $\eus F_{4}$ are already too 
small to be good. This settles the problem for $G=\GR{E}{6}$.

\textbullet \ \ The other two cases are $\GR{A}{1}{\times}\GR{F}{4}\subset \GR{E}{7}$ and 
$\GR{G}{2}{\times}\GR{F}{4}\subset \GR{E}{8}$. For the first of them, 
one can prove that $\dim\bbk[\bomin]^H_2=1$ using the same method as in Section~\ref{subs:class} for
$Spin_9\subset SO_{16}$. The last case is most difficult, and we provide a thorough treatment. Here 
$\dim\h-\rk\,\h=60$ and $\dim\omin(\eus E_8)=58$.

Suppose that $H=\GR{G}{2}{\times}\GR{F}{4}$ has a dense orbit $\omen$ in 
$\omin=\omin(\eus E_8)$. Then $\bvp(\omen)$ is a nilpotent $H$-orbit of dimension 58. Therefore, 
for dimension reason, $\bvp(\omen)$ equals either
$\co_{\sf sub}(\eus G_{2})\times \co_{\sf reg}(\eus F_{4})$ or
$\co_{\sf reg}(\eus G_{2})\times \co_{\sf sub}(\eus F_{4})$. Recall that $\co_{\sf reg}(\cdot)$
(resp. $\co_{\sf sub}(\cdot)$) is the unique {\it regular\/} (resp. {\it subregular}) nilpotent orbit and then
$\ov{\co_{\sf reg}(\cdot)}=\N(\cdot)$.

{\bf -- } Assume that the {\sl first\/} possibility holds, i.e.,  
$\bvp(\bomin)= \ov{\co_{\sf sub}(\eus G_{2})}\times \N(\eus F_{4})$. Here 
$\bvp(\bomin)$ is normal and the morphism $\bvp$ is finite,
see Section~\ref{subs:P2-to-P1}.
Consider the categorical quotients of $\bomin$ and $\bvp(\bomin)$ w.r.t. $\GR{G}{2}\subset H$. Clearly, 
the closed $\GR{G}{2}$-orbits in $\bvp(\bomin)$ are fixed points, hence the same is true for $\bomin$.
Therefore, $\bomin\md \GR{G}{2}\simeq (\bomin)^{\GR{G}{2}}$ and likewise for $\bvp(\bomin)$. 
We obtain the commutative diagram
\[
  \xymatrix{  & \bomin \ar[r]^(.31){\bvp} \ar[d]^{\bpi_1}   & 
  \ov{\co_{\sf sub}(\eus G_{2})}{\times} \N(\eus F_{4}) \ar[d]^{\bpi_2} \ar@{=}[r] & 
  \bvp(\bomin) \ar[d]^{\bpi_2}  \\
  (\bomin)^{\GR{G}{2}} \ar@{=}[r]^-{\sim} &  \bomin\md \GR{G}{2} \ar[r]^{{\bvp\md {\bf G}_2}}  
  &  \N(\eus F_{4})   \ar@{=}[r]^-{\sim} &   \bvp(\bomin)^{\GR{G}{2}}
   }
\]
Here $\bpi_1$ and $\bpi_2$ are quotient maps with respect to $\GR{G}{2}$, and the induced map 
$\bvp\md\GR{G}{2}$ is again finite.

For $\g=\eus E_{8}$, one has $\tvp_1=\g$ and $\tvp_1\vert_H=(\vp_2+\vp'_4)+\vp_1\vp'_1=
(\eus G_2+\eus F_4)+\me$. Here $\vp_1,\vp_2$ (resp. $\vp'_1, \dots,\vp'_4$) are the fundamental weights of $\GR{G}{2}$ (resp. $\GR{F}{4}$).
By definition, $\bvp$ is the restriction to $\bomin$ of the 
projection of $H$-modules 
\[
  \g= (\vp_2+\vp'_4)+\vp_1\vp'_1\to \vp_2+\vp'_4=\h .
\]
The $\GR{G}{2}$-fixed points in $\g$ form the $H$-submodule $\vp'_4$, hence 
$(\bomin)^{\GR{G}{2}}=\bomin\cap \vp'_4$. Since $ (\bomin)^{\GR{G}{2}}\simeq
\bomin\md \GR{G}{2}$, the closure of any $\GR{G}{2}$-orbit contains a unique fixed point. Set
$x=x_1+x_2+x_3\in \bomin$, where $x_1\in \vp_2, \ x_2\in \vp'_4, x_3\in \vp_1\vp'_1$. As the 
$\GR{G}{2}$-action does not affect the component $x_2$, the only possible fixed point in
$\ov{\GR{G}{2}{\cdot}x}$ is $x_2$. Hence $\bpi_1$ can be identified with the projection
from $\bomin$ to $\vp'_4$. Likewise, $\bpi_2$ is identified with the projection from $\bvp(\bomin)$ to 
$\vp'_4$. Thus, both $\bpi_1$ and $\bpi_2\circ\bvp$ are projections to $\vp'_4$, and hence
$\bvp\md \GR{G}{2}$ is an isomorphism.

Since $\omin$ is smooth, the same is true for $(\omin)^{\GR{G}{2}}$~\cite{fogarty}. Hence
the origin is the only singular point in $(\bomin)^{\GR{G}{2}}$. On the other hand, the set of singular points in $\N(\eus F_{4})$ equals $\ov{\co_{\sf sub}(\eus F_{4})}$, which is
of codimension~2. A contradiction! 

{\bf --} A similar argument shows that the case $\N(\eus G_{2})\times \ov{\co_{\sf sub}(\eus F_{4})}$ is 
also impossible.

This proves that there are no good subgroups $H$ in $\GR{E}{7}$ or $\GR{E}{8}$ and completes our classification of good pairs $(H,\omin)$ for the exceptional groups.

\section{Some complements and open problems}
\label{sect:complement}

\noindent
For the projections of nilpotent $G$-orbits associated with the sum $\g=\h\oplus\me$, there are several 
interesting properties and observations that are not fully understood yet.

\subsection{} \label{subs:saturation}
By~\cite[Theorem\,6.7]{omin}, if  $(\g,\h)=(\g,\g_0)$ is a symmetric pair, then
\beq   \label{eq:ravno}
    \ov{G{\cdot}\bvp(\omin)}=\ov{G{\cdot}\bpsi(\omin)} .
\eeq
Let $\mathcal Y$ denote this common subvariety of $\g$. The structure of $\mathcal Y$ depends on whether the 
intersection $\omin\cap\g_1$ is empty or not. More precisely,

{\sf (1)} \ \ if $\omin\cap\g_1\ne\varnothing$, then $\mathcal Y=\ov{\bbk^\ast(G{\cdot}h)}$, where $h$ is 
a characteristic of $e\in \omin$; 

{\sf (2)} \ \ if $\omin\cap\g_1=\varnothing$, then $\mathcal Y=\ov{\tilde\co}$, where the orbit
$\tilde\co\in \N/G$ occurs in Table~\ref{table:odin}.

\noindent
However, whereas the proof of {\sf (1)} is conceptual, the proof of {\sf (2)} relies on the classification 
of good symmetric subgroups $G_0$. In case {\sf (2)}, both $G{\cdot}\bvp(\omin)$ and 
$G{\cdot}\bpsi(\omin)$ contain dense $G$-orbits. Hence Equality~\eqref{eq:ravno} means that these 
orbits coincide. Equivalently, if $e=a+b\in\omen\subset\omin$, then $G{\cdot}a=G{\cdot}b$. Surprisingly, 
this remains true for all good pairs of the form $(H,\omin)$. That is, we have

\begin{thm}     \label{thm:6.1}
If \ $\omin\cap\me =\varnothing$, then Equality~\eqref{eq:ravno} holds.
\end{thm}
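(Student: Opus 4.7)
The plan is to exploit the classification of good pairs in Table~\ref{table:odin}. The inclusion $\ov{G\cdot\bpsi(\omin)}\subseteq\ov{G\cdot\bvp(\omin)}$ is automatic: Corollary~\ref{cl:inclusion-P} tells us that for $e=a+b\in\omen$ one has $b\in\ov{G\cdot a}$, so $G\cdot b\subseteq\ov{G\cdot a}$. Since both $G\cdot a$ and $G\cdot b$ are nilpotent $G$-orbits with the latter lying in the closure of the former, Equality~\eqref{eq:ravno} is equivalent to the statement that $G\cdot a=G\cdot b$, and both then coincide with the orbit $\tilde\co$ listed in the sixth column of Table~\ref{table:odin}.

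A uniform classification-free argument would be preferable but seems out of reach. The natural candidate uses the $h_a$-grading $\g=\bigoplus\g(i)$ (with $a\in\h(2)$, $b\in\me(2)$, both in $\g(2)$), together with the density of $G(0)\cdot a$ in $\g(2)$ from Theorem~\ref{thm:P-dense}: if $b$ were in this open orbit, then $G\cdot b=G\cdot a$ and we would be done. However, the element $e=a+b$ itself lies in $\g(2)$ and is \emph{not} in $G(0)\cdot a$ in general---otherwise $G\cdot e$ would equal $G\cdot a$, contradicting e.g.\ item~6 where $\dim\omin=8<12=\dim\tilde\co$. This forces a case-by-case analysis.

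For items~1--5 and item~11 with $k=2$, the pair $(\g,\h)$ is symmetric and the equality is \cite[Theorem~6.7(2)]{omin}. For item~6, Lemma~\ref{lm:B3-G2} shows that $\N_H(\me)\setminus\{0\}$ is a single $\GR{G}{2}$-orbit $\mathbb O$ with $SO_7\cdot\mathbb O=\co(3,2,2)=\tilde\co$; since Proposition~\ref{thm-6.1}(5) yields $[h_a,b]=2b$ with $h_a\in\h$ (so the associated cocharacter lies in $H$ and destabilises $b$), we get $b\in\mathbb O$ and hence $G\cdot b=\tilde\co$. For items~7, 8, and~10 one can use the intermediate symmetric subgroups $H_1=\GR{D}{4},\GR{B}{4},\GR{B}{3}$ listed in Section~\ref{subs:except}: the symmetric case applied to $(G,H_1)$ handles the component $b_1\in\me_1=\h_1^\perp$, while the module decompositions of Section~\ref{subs:emb} (combined with item~6 for case~10) handle the component $b_2\in\me_2\subset\h_1$, together identifying $G\cdot b$ with $\tilde\co$. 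For item~9, $\me\simeq\bbk^3\oplus(\bbk^3)^*$ as $SL_3$-module and $\bpsi(\omen)$ lies in the dense $SL_3$-orbit of $\N_H(\me)$, namely pairs $(v,f)$ with $v,f\ne 0$ and $f(v)=0$; an explicit $\tri$-decomposition inside $\eus G_2=\sltri\oplus\bbk^3\oplus(\bbk^3)^*$ then shows $G\cdot b=\mathsf G_2(a_1)=\tilde\co$. Finally, item~11 with $k\geq 3$ admits the intermediate symmetric subgroup $\GR{C}{n_1}\times\GR{C}{n_2+\cdots+n_k}$ and reduces to the $k=2$ case.

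The main obstacle is the absence of a conceptual reason why $G\cdot b$ should coincide with $G\cdot a$ rather than being strictly smaller---equivalently, why $b$ lies in the open $G(0)$-orbit of $\g(2)$ even though $e$ does not. Item~9 is the most delicate, since $\me$ is reducible as $\GR{A}{2}$-module and the $\GR{G}{2}$-saturation must be computed by hand inside $\eus G_2$.
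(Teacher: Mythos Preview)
Your overall architecture matches the paper's: both proofs are case-by-case over Table~\ref{table:odin}, both note the inclusion $G\cdot b\subset\ov{G\cdot a}$ from Theorem~\ref{thm:P-dense}, and both treat items~1--5 and item~11 with $k=2$ by citing the symmetric result. Your treatment of item~6 via Lemma~\ref{lm:B3-G2} is also essentially the paper's.

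The gap is in items~7, 8, 10 and item~11 with $k\ge 3$. Your plan is to split $b=b_1+b_2$ along an intermediate symmetric subgroup $H_1$ and ``handle'' $b_1$ and $b_2$ separately. But knowing $G\cdot b_1$ and something about $b_2$ does not determine $G\cdot(b_1+b_2)$: the $G$-orbit of a sum depends on the relative position of the summands, not just on their individual orbits. Concretely, for item~10 the symmetric step $(\GR{D}{4},\GR{B}{3})$ yields $G\cdot b_1=\co(3,1^5)$ of dimension~$12$, while the target $\tilde\co=\co(3,3,1,1)$ has dimension~$18$; nothing in your sketch explains why adding $b_2$ bumps $b_1$ up to the correct orbit rather than to some intermediate one. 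The phrase ``together identifying $G\cdot b$ with $\tilde\co$'' is precisely where the argument is missing. The same objection applies to your reduction of item~11, $k\ge 3$, to $k=2$: the intermediate $H_1=\GR{C}{n_1}\times\GR{C}{n-n_1}$ controls only part of $b$.

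The paper's device for these items is different and worth learning. One first bounds $\dim G\cdot b\le\dim\tilde\co$ (from $b\in\ov{G\cdot a}$) and invokes the general inequality $\dim G\cdot b\ge 2\dim H\cdot b$, so $\dim H\cdot b\le\tfrac12\dim\tilde\co$. Next, since $\omen$ is dense in $\omin$ and $\omin$ spans $\g$, the $H$-orbit $\bpsi(\omen)=H\cdot b$ spans $\me$. One then identifies, by hand, the unique minimal-dimensional $H$-orbit in $\me$ with linear span all of $\me$ (a sum of highest weight vectors, suitably modified for item~10 where $\me$ is not multiplicity free), checks that its dimension equals $\tfrac12\dim\tilde\co$, and concludes $H\cdot b$ is that orbit and $\dim G\cdot b=\dim\tilde\co$. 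For item~11 with $k\ge 3$ the paper instead uses the explicit description of $\omin(\spn)$ as rank-one matrices and computes the rank and square of $\bpsi(x)$. Your item~9 sketch is closer in spirit to this, but the assertion that $b$ lands in the dense $SL_3$-orbit of $\N_H(\me)$ (rather than in $\{(v,0)\}$ or $\{(0,f)\}$) again needs the linear-span argument to be justified.
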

\begin{proof}
Our proof for the non-symmetric pairs in Table~~\ref{table:odin} is also case-by-case. 
\\ \indent
\textbullet \ For $(G,H)=(\GR{B}{3},\GR{G}{2})$ ({\rus N0}6),  this is basically proved in 
Lemma~\ref{lm:B3-G2}.

\textbullet \ {\rus N0}7, $(\GR{B}{4},\GR{B}{3})$. Here $\dim\tilde\co=20$. Since $b\in\ov{G{\cdot}a}$
(Theorem~\ref{thm:P-dense}), we always have 
\[
      G{\cdot}\bpsi(\omen)\subset \ov{G{\cdot}\bvp(\omen)}=\ov{\tilde\co} .
\] 
Furthermore, $\dim G{\cdot}\bpsi(\omen)\ge 2\dim \bpsi(\omen)$~\cite[Lemma\,2.1]{p24}. Hence 
$\dim \bpsi(\omen)\le 10$. On the other hand, the linear span of $\omin$ equals $\g$ and $\omen$ is 
dense in $\omin$. Hence the linear span of the $H$-orbit $\bpsi(\omen)$ equals $\me=\vp_1+\vp_3$. It 
is easily seen that the minimal $H$-orbit in $\me$ with this property is $H{\cdot}v$, where
$v=v_{\vp_1}+v_{\vp_3}\in \me$ is a sum of highest weight vectors in ${\vp_1}+{\vp_3}$. Since $\dim H{\cdot}v=10$,
we obtain $H{\cdot}v=\bpsi(\omen)$, $\dim G{\cdot}v=20$, and hence $G{\cdot}v=\tilde\co$. 

\textbullet \ The same approach works for items~8 and 9. 
\\ \indent
\textbullet \ For {\rus N0}10, the similar idea has to be slightly adjusted. Here $\me=2\vp_1$ is not a 
multiplicity free $H$-module. Hence the sum of two highest weight vectors 
$v=v^{(1)}_{\vp_1}+v^{(2)}_{\vp_1}$ lies in a proper $H$-invariant subspace of $\me$. A right choice is 
$v=v^{(1)}_{\vp_1}+v^{(2)}_{\vp_1-\ap_1}$. Then $H{\cdot}v$ is the minimal $H$-orbit such that $\lg H{\cdot}v\rg=\me$ and $\dim H{\cdot}v=9=\frac{1}{2}\dim\tilde\co$.
\\ \indent
\textbullet \ Computations for {\rus N0}11 with $k\ge 3$, which are omitted here, exploit the interpretation 
of $\omin(\spn)$ as the set of symplectic matrices of rank~1. We show that, for a suitable choice of
$x\in \omin(\spn)$, the matrix $\bpsi(x)\in\me\subset\spn$ has the property that $\rk\,\bpsi(x)=k$ and
$\bpsi(x)^2=0$. Then one uses the fact that the orbit $\co(2^k,1^{2n-2k})\subset\N(\spn)$ consists of the symplectic 
matrices $\mathcal M$ such that $\mathcal M^2=0$ and $\rk\,\mathcal M=k$.
\end{proof}

Anyway, a conceptual proof of Theorem~\ref{thm:6.1} would be most welcome.
On the other hand, it follows from our classification that if $\co$ is a non-minimal good orbit, then
$\ov{G{\cdot}\bvp(\co)}\ne\ov{G{\cdot}\bpsi(\co)}$. More precisely,

\begin{prop}   \label{net-ravno}
If $(H,\co)$ is good and $\co$ is not minimal,
then $\omin\subset\bco$ is also good and
\[
   \ov{G{\cdot}\bvp(\co)}\supsetneqq \ov{G{\cdot}\bvp(\omin)}=\ov{G{\cdot}\bpsi(\omin)}=
   \ov{G{\cdot}\bpsi(\co)} .
\]
\end{prop}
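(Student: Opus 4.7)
The plan is to handle the four assertions in sequence. First, $\omin\subset\bco$ is automatic, since $\omin$ is the unique minimum of the poset $\N/G\setminus\{0\}$, and then $\bomin\cap\me\subset\bco\cap\me=\{0\}$ shows that $(H,\omin)$ is good. The middle equality $\ov{G{\cdot}\bvp(\omin)}=\ov{G{\cdot}\bpsi(\omin)}$ then follows directly from Theorem~\ref{thm:6.1}, so the two substantive tasks are the right-hand equality and the strict inclusion on the left.

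For the right-hand equality my plan is to invoke Theorem~\ref{thm:non-minim-good} to reduce to the two concrete cases: $(SO_N,SO_{N-1})$ with $\co=\co(2^{2k},1^{N-4k})$, $k\ge 2$, and $(\GR{B}{3},\GR{G}{2})$ with $\co=\co(3,1^4)$. In each case I will show that $\me\cap\N$ consists of a single nonzero $H$-orbit together with $\{0\}$; then $\bpsi(\omen)$ and $\bpsi(\omco)$ are both equal to that orbit. Nontriviality of $\bpsi(\omco)$ is a soft argument: if it were zero then $\omco\subset\h$, hence $\bco\subset\h$, and the $G$-invariance of $\bco$ would place it inside the largest $G$-stable subspace of $\h$, which is $\{0\}$ since $\g$ is simple and $\h\ne\g$. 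For the orthogonal series, $\me\cong\bbk^{N-1}$ is the standard $SO_{N-1}$-module, and a direct block-matrix calculation shows that $v\in\me$ is nilpotent in $\g$ iff $v$ is isotropic, with the nonzero isotropic vectors forming a single $SO_{N-1}$-orbit. For $(\GR{B}{3},\GR{G}{2})$, $\me=\vp_1$ is the $7$-dimensional $\GR{G}{2}$-module and Lemma~\ref{lm:B3-G2} already records that $\N_{\GR{G}{2}}(\vp_1)\setminus\{0\}$ is a single $\GR{G}{2}$-orbit.

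For the strict inclusion $\ov{G{\cdot}\bvp(\co)}\supsetneqq\ov{G{\cdot}\bvp(\omin)}$, I would read the two $G$-saturations off Tables~\ref{table:odin} and~\ref{table:dva} and compare dimensions. In the orthogonal case, $G{\cdot}\bvp(\omen)=\co(3,1^{N-3})$ has dimension $2N-4$, whereas $G{\cdot}\bvp(\omco)=\co(3,2^{2k-2},1^{N-4k+1})$ has dimension $2k(N-2k)$, which strictly exceeds $2N-4$ for every $k\ge 2$. In the $(\GR{B}{3},\GR{G}{2})$ case, the two $G$-orbits are $\co(3,2,2)$ and $\co(3,3,1)$, of dimensions $12$ and $14$. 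By Corollary~\ref{cl:inclusion-P} the larger orbit already contains $\bco\supset\omin$ in its closure, so the strict dimension inequality upgrades to proper inclusion of closures.

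The principal obstacle is the identification of $\me\cap\N$: in general this would be hard, but Theorem~\ref{thm:non-minim-good} reduces it to the two explicit $H$-modules above, in each of which $\me$ happens to be the standard representation of the smaller group and admits a unique nonzero nilpotent $H$-orbit. A conceptual, case-free argument --- paralleling the missing one for Theorem~\ref{thm:6.1} itself --- does not seem to be within reach here.
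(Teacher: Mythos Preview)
Your proposal is correct and follows essentially the same route as the paper: reduce via Theorem~\ref{thm:non-minim-good} to the two explicit families, observe that in each case the nonzero nilpotent elements of $\me$ form a single $H$-orbit (so $\bpsi(\omen)=\bpsi(\omco)$), invoke Theorem~\ref{thm:6.1} for the middle equality, and read the strict inclusion off Tables~\ref{table:odin} and~\ref{table:dva}. The paper phrases the single-orbit fact as ``$\N_H(\me)\setminus\{0\}$ is a sole $H$-orbit'', which coincides with your $\me\cap\N\setminus\{0\}$ in these cases.

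One small wrinkle: your appeal to Corollary~\ref{cl:inclusion-P} for the inclusion $\ov{G{\cdot}\bvp(\omin)}\subset\ov{G{\cdot}\bvp(\co)}$ is misplaced, since that corollary yields $\bco\subset\ov{G{\cdot}\bvp(\omco)}$, not a comparison with $\ov{G{\cdot}\bvp(\omin)}$. The inclusion you need is simpler and requires no corollary: $\omin\subset\bco$ gives $\bvp(\omin)\subset\bvp(\bco)=\ov{\bvp(\co)}$ (finiteness of $\bvp$), hence $G{\cdot}\bvp(\omin)\subset\ov{G{\cdot}\bvp(\co)}$. The paper expresses this as $\bvp(\omen)\subset\ov{\bvp(\omco)}$ and then notes the $H$-orbits are distinct, rather than comparing $G$-orbit dimensions as you do; both arguments finish the job.
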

\begin{proof}
By Theorem~\ref{thm:non-minim-good}, there are only a series of good orbits for $(SO_N,SO_{N-1})$
and a good non-minimal orbit for $(\GR{B}{3},\GR{G}{2})$. In both cases, 
$\N_H(\me)\setminus\{0\}$ is a sole $H$-orbit. (Incidentally, $\N_H(\me)$ is a hypersurface in $\me$.)
Therefore, $\ov{\bpsi(\omin)}=\ov{\bpsi(\co)}=\N_H(\me)$. The equality
$\ov{G{\cdot}\bvp(\omin)}=\ov{G{\cdot}\bpsi(\omin)}$ stems from Theorem~\ref{thm:6.1}. Then
using Proposition~\ref{prop:fi-non-min} and Table~\ref{table:odin}, we conclude that the $H$-orbits
$\bvp(\omen)$ and $\bvp(\omco)$ are different, and
$\bvp(\omen)\subset\ov{\bvp(\omco)}$.
\end{proof}

\subsection{} 
\label{subs:ht}
By ~\cite[Theorem\,6.2]{omin}, if $\g_0\subset\g$ is a good symmetric subalgebra and 
$e=a+b\in\omin$, then $(\ad e)^3=(\ad b)^3=0$. (A simpler proof stems from 
Proposition~\ref{prop:cP}(ii).) In other words, $\hot(\tilde\co)=2$. But this is not true for 
non-symmetric cases of Table~\ref{table:odin}. For items 6--10, $\hot(\tilde\co)$
is equal to $3,3,3,4,4,$ respectively (use formulae for the height in~\cite[Section\,2]{p99}).
Nevertheless, one can notice a nice general property.

For all items in Table~\ref{table:odin}, one readily verifies that the height of the $H$-orbit 
$\bvp(\omen)\subset \h$ equals the height of the $G$-orbit $\tilde\co=G{\cdot}\bvp(\omen)$. This   
also holds for the non-minimal good orbits $\co$ of Proposition~\ref{prop:fi-non-min} and Table~\ref{table:dva}. More precisely,
\\ 
\textbullet \ \ For {\sf (i)},  $\bvp(\omco)=\co(3,2^{2k-2},1^m)\subset \mathfrak{so}_{N-1}$ and
$SO_N{\cdot}\co(3,2^{2k-2},1^m)=\co(3,2^{2k-2},1^{m+1})$, where both heights are equal to $3$ for
$k\ge 2$. 
\\ 
\textbullet \ \ For {\sf (ii)}, the height of both orbits in question is equal to 4. 

\noindent
That is, for all good pairs $(H,\co)$, one has 
\beq             \label{eq:height}
            \hot_H(\bvp(\omco))=\hot_G\bigl(G{\cdot}\bvp(\omco)\bigr) .
\eeq

\noindent
It would be interesting to have a conceptual explanation for~\eqref{eq:height}. Another curious corollary 
of our classification is that, for a good orbit $\co\in\N/G$, one always has $\hot(\co)=2$.

\subsection{}   \label{subs:cP}
Let $\cP=\lg a,b\rg$ be a commutative plane related to a good pair $(H,\co)$. Recall that 
this means that $e=a+b\in\omco\subset\co$. An interesting open problem is to determine all $G$-orbits 
meeting $\cP$. So far, the following orbits have been found (detected):
$G{\cdot}a=\tilde\co=G{\cdot}\bvp(\omco)$, $\co=G{\cdot}(a+b)$, and 
$G{\cdot}b=G{\cdot}\bpsi(\omco)$. 

By Section~\ref{subs:saturation}, if $\co=\omin$, then $G{\cdot}a=G{\cdot}b$; but if $\co\ne\omin$, then $b\not\in G{\cdot}a$.

\begin{lm}     \label{lm:2-orb}
If $\co=\omin$ and $\h$ is a symmetric subalgebra of $\g$, then $\tilde\co$ and $\omin$ are the only nonzero $G$-orbits meeting $\cP$. 
\end{lm}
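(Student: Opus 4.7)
The plan is to show $\cP\setminus\{0\}\subseteq\tilde\co\cup\omin$ by a closure argument, and then to check that both orbits actually meet $\cP$.

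I begin with structural facts about $\cP$ under $\sigma$. Since $\sigma$ fixes $a$ and negates $b$, the plane $\cP=\bbk a+\bbk b$ is $\sigma$-stable with $\pm 1$-eigenvectors $a,b$. In the symmetric setting Theorem~\ref{thm:6.1} yields $\ov{G{\cdot}a}=\ov{G{\cdot}b}$, and since the dense orbit in an irreducible $G$-variety is unique, this upgrades to $G{\cdot}a=G{\cdot}b=\tilde\co$. In particular $a$ and $b$ are nonzero; lying in opposite $\sigma$-eigenspaces they are linearly independent, so $\cP$ is two-dimensional. Because $\omin$ is the unique minimal nilpotent $G$-orbit it is preserved by $\sigma\in\mathsf{Aut}(\g)$; hence $\sigma(e)=a-b$ also lies in $\omin$. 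Thus $\cP$ already meets both candidate orbits: $a\in\cP\cap\tilde\co$ and $e\in\cP\cap\omin$.

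Next, Theorem~\ref{thm:P-dense} applied to $\co=\omin$ gives that $G{\cdot}a\cap\cP$ is dense in $\cP$, so
$$
\cP\subset\ov{G{\cdot}a}=\ov{\tilde\co}.
$$
Consequently every nonzero $G$-orbit meeting $\cP$ is contained in $\ov{\tilde\co}$.

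The remaining task is to identify $\ov{\tilde\co}$. I would inspect the symmetric entries of Table~\ref{table:odin} (items 1--5 and item~11 with $k=2$) and verify that in each case $\tilde\co$ covers $\omin$ in the poset $\N/G$. For items 3--5 and 11 this is an elementary comparison of partitions; for $(\GR{F}{4},\GR{B}{4})$ and $(\GR{E}{6},\GR{F}{4})$ one reads it off the standard Hasse diagrams of $\N(\eus F_4)$ and $\N(\eus E_6)$, in which $\tilde{\mathsf A}_1$ and $2\mathsf A_1$ are the unique covers of the minimal orbit. Combining the three paragraphs yields $\ov{\tilde\co}=\tilde\co\sqcup\omin\sqcup\{0\}$, hence $\cP\setminus\{0\}\subseteq\tilde\co\cup\omin$, and together with the two observed intersections this completes the proof.

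The main obstacle is the last paragraph: I do not see a conceptual reason, independent of the classification, why no nilpotent orbit lies strictly between $\omin$ and $\tilde\co$. Proposition~\ref{prop:cP}(ii) does force $\hot(x)=2$ for every nonzero $x\in\cP$, which is consistent with the conclusion, but is insufficient by itself since a simple Lie algebra can carry several distinct height-two nilpotent orbits.
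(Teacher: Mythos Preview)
Your proposal is correct and follows essentially the same route as the paper: invoke Theorem~\ref{thm:P-dense} to get $\cP\subset\ov{\tilde\co}$, then check case-by-case (items~1--5 and~11 with $k=2$) that $\tilde\co$ covers $\omin$ in $\N/G$, so that $\ov{\tilde\co}=\tilde\co\cup\omin\cup\{0\}$. The paper's proof is exactly this two-line argument; your first paragraph (the $\sigma$-stability of $\cP$, the appeal to Theorem~\ref{thm:6.1} for $G{\cdot}a=G{\cdot}b$, and the explicit verification that both orbits meet $\cP$) is correct but not needed, and your honest self-critique about the reliance on classification applies equally to the paper's own proof.
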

\begin{proof}
In these cases ({\rus N0}1--5 and {\rus N0}11 with $k=2$), $\tilde\co$ covers $\omin$ in the poset $\N/G$. We have also proved that $\tilde\co\cap\cP$ is dense in $\cP$ (Theorem~\ref{thm:P-dense}).
\end{proof}

If $\co=\omin$ and $\h$ is not symmetric, then there are always intermediate orbits between $\tilde\co$
and $\omin$. In some cases, we are able to prove that some of them do meet $\cP$. 

For $(G,H)=(\GR{C}{n},\displaystyle \times_{i=1}^k\GR{C}{n_i})$ with $k\ge 3$, we have
$ \tilde\co=G{\cdot}\bvp(\omen)= \co(2^k,1^{2n-2k})$
and $\omin=(2,1^{2n-2})$. In this case, the intermediate orbits are $\co_l=\co(2^l, 1^{2n-2l})$ with 
$2\le l\le k-1$, and we can prove that $\co_{k-1}\cap\cP\ne \varnothing$.

For $(G,H)=(\GR{G}{2},\GR{A}{2})$, the intermediate orbit is $\tilde{\eus A}_1=\co_8$. Using direct 
calculations with root system $\GR{G}{2}$, we can produce an explicit model of $\cP\subset\eus G_2$ 
and prove that $\co_8\cap\cP\ne\varnothing$. (This is an advantage of a small rank case.)

\subsection{} 
\label{subs:triple1}
Associated with Table~\ref{table:odin}, there are some curious coincidences. 

\textbullet \ Certain pairs 
of groups from that table can be organised into triples $G\supset H_1\supset H_2$ such that both 
$(G,H_1)$ and $(H_1,H_2)$ occur in Table~\ref{table:odin}, and the following property holds for
the related projections $\bvp_1:\g\to \h_1$ and $\bvp_2:\h_1\to \h_2$:
\[ 
  \bvp_1(\omin(\g))=\tilde\co=H_1{\cdot}\bvp_2(\omin(\h_1)) .
\] 
This equality expresses the coincidence of two nilpotent $H_1$-orbits. Below, we point out the  suitable 
triples together with the corresponding $H_1$-orbit $\tilde\co$.
\begin{center}
\begin{tabular}{|ll||ll|}
$\GR{E}{6}\supset \GR{F}{4}\supset \GR{B}{4}$, & $\tilde{\mathsf A}_{1}$ &
$\GR{D}{n+1}\supset \GR{B}{n}\supset \GR{D}{n}$, & $\co(3,1^{2n-2})$ \\
$\GR{B}{4}\supset \GR{B}{3}\supset \GR{G}{2}$, & $\co(3,2,2)$ \rule{0pt}{2.4ex} & 
$\GR{B}{n}\supset \GR{D}{n}\supset \GR{B}{n-1}$, & $\co(3,1^{2n-3})$ \\
$\GR{D}{4}\supset \GR{G}{2}\supset \GR{A}{2}$, & ${\mathsf G}_{2}(a_1)$ \rule{0pt}{2.4ex} & 
$\GR{A}{2n-1}\supset \GR{C}{n}\supset \GR{C}{n_1}\times \GR{C}{n_2}$, & $\co(2^2,1^{2n-4})$ \\
\end{tabular}
\end{center}

\textbullet \ Another kind of triples $G\supset H_1\supset H_2$ combines pairs of shared orbits of 
into the triple of the form
\[
     \co=\omin(\g)\leadsto \co'=\bvp_1(\Omega_\co)\leadsto \co''=\bvp_2(\Omega_{\co'}) 
\]
such that the orbits $(\co,\co'')$ are also shared. These are
\begin{center}
\begin{tabular}{ll}
$\GR{F}{4}\supset \GR{B}{4}\supset \GR{D}{4}$, & 
$\omin(\eus F_4)\leadsto \co'=\co(2^4,1) \leadsto \co''=\co(3,2,2,1)$; \\
$\GR{D}{4}\supset \GR{B}{3}\supset \GR{G}{2}$, &   
$\omin(\eus D_4)\leadsto \co'=\co(3,1^4) \leadsto \co''=\mathsf{G}_2(a_1)$; \\
$\GR{B}{4}\supset \GR{D}{4}\supset (\GR{B}{3}, \vp_3)$, & 
$\omin(\eus B_4)\leadsto \co'=\co(3,1^5) \leadsto  \co''=\co(3,2,2)$. \\
\end{tabular}
\end{center}
The first two triples occur in \cite{bk94} (Corollary~6.2 and Theorem~6.6), whereas the last one, which involves item~7 of Table~\ref{table:odin}, is new.

\end{document}